\begin{document}

\footskip=0pt
\footnotesep=2pt

\allowdisplaybreaks

\newtheorem{claim}{Claim}[section]

\theoremstyle{definition}
\newtheorem{thm}{Theorem}[section]
\newtheorem*{thmm}{Theorem}
\newtheorem{mydef}{Definition}[section]
\newtheorem{lem}[thm]{Lemma}
\newtheorem{prop}[thm]{Proposition}
\newtheorem{remark}[thm]{Remark}
\newtheorem{rem}{Remark}[section]
\newtheorem*{propp}{Proposition}
\newtheorem{cor}[thm]{Corollary}
\newtheorem{conj}[thm]{Conjecture}
\def\F{\mathcal{F}}
\def\la{\lambda}
\def\tR{\tilde{R}}
\def\ds{\displaystyle}
\def\ve{\varepsilon}
\def\g{\gamma}
\def\ls{\lesssim}
\newcommand{\bd}[1]{\mathbf{#1}}  % for bolding symbols
\newcommand{\R}{\mathbb{R}}      % for Real numbers
\newcommand{\ZZ}{\mathbb{Z}}      % for Integers
\newcommand{\Om}{\Omega}
\newcommand{\bv}{\mathbf{v}}
\newcommand{\bn}{\mathbf{n}}
\newcommand{\bU}{\mathbf{U}}
\newcommand{\q}{\quad}
\newcommand{\p}{\partial}
\newcommand{\n}{\nabla}
\newcommand{\f}{\frac}

\numberwithin{equation}{section}

\title{On the blowup mechanism of smooth solutions to 1D quasilinear strictly hyperbolic systems
with large initial data\footnote{Li Jun (lijun@nju.edu.cn) is supported by NSFC (No.11871030). 
Xu Gang (gxumath@outlook.com, gxu@njnu.edu.cn) and Yin Huicheng (huicheng@nju.edu.cn, 05407@njnu.edu.cn) are 
supported by NSFC (No.11731007, No.11971237).}}

\author[1]{Li Jun}
\author[2]{Xu Gang}
\author[1,2]{Yin Huicheng}
\affil[1]{Department of Mathematics, Nanjing University, Nanjing 210093, China}
\affil[2]{School of Mathematical Sciences and Institute of Mathematical Sciences, Nanjing Normal University, Nanjing 210023, China}

\date{}
\maketitle
\centerline{}

\date{}
\maketitle
\thispagestyle{empty}
\begin{abstract} For the first order 1D $n\times n$ quasilinear
strictly hyperbolic system $\p_tu+F(u)\p_xu=0$ with $u(x, 0)=\ve u_0(x)$, where $\ve>0$ is small,
$u_0(x)\not\equiv 0$ and $u_0(x)\in C_0^2(\Bbb R)$,
when at least one eigenvalue of $F(u)$ is genuinely nonlinear, it is well-known that on the finite blowup time $T_{\ve}$,
the derivatives $\p_{t,x}u$ blow up while the solution $u$  keeps to be small.
For the 1D scalar equation or $2\times 2$ strictly hyperbolic system (corresponding to $n=1, 2$),
if the smooth solution $u$ blows up in finite time, then
the blowup mechanism  can be well understood
(\emph{i.e.}, only the blowup of $\p_{t,x}u$ happens). In the present paper,
for the $n\times n$ ($n\ge 3$) strictly hyperbolic system with a class of large initial data,
we are concerned with the  blowup mechanism of smooth solution $u$ on the finite blowup time and
the detailed singularity behaviours
of  $\p_{t,x}u$ near the blowup point. Our results are based on the efficient  decomposition of $u$ along the different
characteristic directions, the suitable introduction of the modulated coordinates and the global weighted energy estimates.

\end{abstract}

\vskip 0.2cm

{\bf Keywords:} Blowup mechanism, strictly hyperbolic system, genuinely nonlinear,
geometric blowup, modulated coordinate, global weighted energy estimate.\vskip 0.2 true cm

{\bf 2010 Mathematics Subject Classification.} 35L03, 35L67.

\setcounter{tocdepth}{1}
\tableofcontents

\section{Introduction}\label{i}

In the paper, we are concerned with the blowup mechanism  of smooth solutions to the
following Cauchy problem of 1D $n\times n$ quasilinear strictly hyperbolic system:
\begin{subequations}\label{i-1}\begin{align}
&\partial_t u+F(u)\partial_x u=0,\label{i-1a}\\
&u(x, 0)=u_0(x),\label{i-1b}
\end{align}
\end{subequations}
where $t\ge 0$, $x\in\Bbb R$, $u=(u_1, \cdots, u_n)^{\top}$, the $n\times n$ real matrix $F(u)$ is smooth on its argument $u$,
and $u_0(x)\in C^2(\Bbb R)$. The strict hyperbolicity of system \eqref{i-1a} means that $F(u)$ has $n$ distinct real eigenvalues
\begin{equation}\label{i-2}
\lambda_1(u)<\cdots<\lambda_n(u),
\end{equation}
meanwhile the corresponding right eigenvectors are denoted by $\g_1(u), \cdots, \g_n(u)$ respectively. One calls
system \eqref{i-1a} to be genuinely nonlinear with respect to some eigenvalue $\lambda_{i_0}(u)$ ($1\le i_0\le n$) when
\begin{equation}\label{i-3}
\nabla_u\lambda_{i_0}(u)\cdot \gamma_{i_0}(u)\neq 0.
\end{equation}
Otherwise, \eqref{i-1a} is called to be linearly degenerate with respect to the eigenvalue $\lambda_{i_0}(u)$ when
\begin{equation}\label{y-1}
\nabla_u\lambda_{i_0}(u)\cdot \gamma_{i_0}(u)\equiv 0.
\end{equation}
Our purpose of the paper is to discuss the blowup mechanism of smooth solutions to problem \eqref{i-1}
for a class of  large smooth initial data $u_0(x)$  provided that system \eqref{i-1a} is genuinely nonlinear with respect to some eigenvalue $\lambda_{i_0}(u)$ for $i_0\in \{1, \cdots, n\}$.

\subsection{Reviews and problems}
For the 1D scalar equation
\begin{equation}\label{Y-2}\begin{cases}
\partial_t v+f(v)\partial_x v=0,\\
v(x, 0)=v_0(x),
\end{cases}
\end{equation}
where $v_0(x)\not\equiv 0$, $v_0(x)\in C_0^1(\mathbb{R})$, $f(v)$ is a $C^1$ smooth function
and  $f'(v)\not= 0$ for $v\in \text{supp} v_0(x)$.
Set $g(x)=f(v_0(x))$, then by the characteristics method, it is easy to know that
the $C^1$ solution $v$ will blow up
on the finite positive time $T^*=-\frac{1}{\min g'(x)}$ due to $\min\limits_{x\in\mathbb{R}}g'(x)<0$. Meanwhile,
$v\in C(\mathbb{R}\times [0, T^*])$ and $\lim\limits_{t\nearrow T^*}\|\p_{t,x}v(\cdot, t)\|_{C(\Bbb R)}=\infty$ hold.
This illustrates that the blowup of solution $v$ to problem \eqref{Y-2} corresponds  to the
geometric blowup by the terminology in \cite{A0}.

For the 1D $2\times 2$ strictly hyperbolic system
\begin{equation}\label{Y-3}\begin{cases}
\partial_t v+B(v)\partial_x v=0,\\
v(x, 0)=v_0(x),
\end{cases}
\end{equation}
where $v_0(x)\not\equiv 0$, $v_0(x)\in C_0^1(\Bbb R)$, $B(v)\in C^1$ is a $2\times 2$ matrix which admits two
distinct real eigenvalues $\la_1(v)$ and $\la_2(v)$, by introducing
two Riemann invariants $w_1=w_1(v)$ and  $w_2=w_2(v)$, then \eqref{Y-3} can be decoupled into the following
$2\times 2$ strictly hyperbolic system of $w=(w_1,w_2)$:
\begin{equation}\label{Y-4}\begin{cases}
\partial_t w_1+\la_1(w)\partial_x w_1=0,\\
\partial_t w_2+\la_2(w)\partial_x w_2=0,\\
w(x, 0)=w_0(x).
\end{cases}
\end{equation}
When the system in \eqref{Y-3} is genuinely nonlinear with respect to at least one eigenvalue $\la_i(v)$ ($i=1,2$),
then by \eqref{Y-4} and \cite{PDL1}, one knows that the smooth solution $v$ will blow up at the maximal
finite existence time $T^*$, meanwhile $\|v\|_{L^{\infty}(\mathbb{R}\times [0, T^*])}$ is bounded and
$\lim\limits_{t\nearrow T^*}\|\p_{t,x}v(\cdot, t)\|_{C(\Bbb R)}=\infty$ holds.
This implies that the blowup of solution $v$ to \eqref{Y-3} also corresponds  to the geometric blowup.

For the small data solution problem of 1D $n\times n$ quasilinear strictly hyperbolic system
\begin{equation}\label{Y-1}\begin{cases}
\partial_t v+B(v)\partial_x v=0,\\
v(x, 0)=\ve v_0(x),
\end{cases}
\end{equation}
where $\ve>0$ is small, $v_0(x)\not\equiv 0$, $v_0(x)\in C_0^2(\Bbb R)$ and $B(v)\in C^2$ is a $n\times n$ matrix,
when the system in \eqref{Y-1} is genuinely nonlinear with respect to at least
one eigenvalue of $B(v)$, it follows from the results in  \cite{LH} and \cite{FJ} that the lifespan $T_{\ve}$ of smooth solution $v$ to \eqref{Y-1}
satisfies
$$\lim\limits_{\ve\to 0^{+}}\ve T_{\ve}=\tau_0>0.$$
Moreover, $\|v\|_{C(\mathbb{R}\times [0, T_\ve])}\le C\ve$ and  $\ds\lim_{t\to T_{\ve}-}\|\p_{t,x}v(\cdot, t)\|_{C(\Bbb R)}=\infty$ hold.
This means that the blowup of solution $v$ to \eqref{Y-1} corresponds  to the  geometric blowup.

Compared with the results on problem \eqref{Y-2} and problem \eqref{Y-3}, two natural problems arise for the system \eqref{i-1a}
with $n\geq 3$: when at least one eigenvalue of $F(u)$  is genuinely nonlinear,

{\bf Q1.} Can we find
a class of large initial data \eqref{i-1b} such that the blowup of solution $u$ corresponds  to the  geometric blowup
as in the small data solution problem \eqref{Y-1}?

{\bf Q2.} Can we find
another class of large initial data \eqref{i-1b} such that the solution $u$ itself blows up in finite time?

In the present paper, we focus on the investigation of {\bf Q1}.

\subsection{Statement of main results}
By Proposition \ref{lemA-1} in Section \ref{II},
\eqref{i-1} can be equivalently changed into the following problem
\begin{subequations}\label{i-6}\begin{align}
&\partial_t w+A(w)\partial_x w=0,\label{i-6a}\\
&w(x, -\varepsilon)=w_0(x),\label{i-6b}
\end{align}
\end{subequations}
where $w(x, t)=(w_1, \cdots, w_n)^{\top}$, $t\geq -\varepsilon$, and $\varepsilon>0$ is a
small constant (for the convenience of expression,
here the initial temporal variable is shifted from $t=0$ to $t=-\varepsilon$).
In addition, the $n$ distinct real eigenvalues of smooth function matrix $A(w)=\left(a_{ij}(w)\right)_{n\times n}$
are denoted by $\mu_1(w), \cdots, \mu_n(w)$. Based on the reduction in Proposition \ref{lemA-1} and the strictly
hyperbolic condition \eqref{i-2}, there hold
\begin{subequations}\label{i-7}\begin{align}
&\mu_1(w)<\cdots<\mu_{i_0-1}(w)<\mu_n(w)<\mu_{i_0}(w)<\cdots<\mu_{n-1}(w),\label{i-7a}\\
&a_{in}(w)=0\ (1\leq i\leq n-1),\label{i-7b}\\
& a_{nn}(w)=\mu_n(w)=\mu_n(0)+\partial_{w_n}\mu_n(0)w_n+\sum\limits_{i=1}^{n-1}\partial_{w_i}\mu_n(0)w_i+O(|w|^2),\label{i-7c}\\
&A(0)=\text{diag}\{\mu_1(0), \cdots, \mu_n(0)\},\label{i-7d}
\end{align}
\end{subequations}
where $\partial_{w_n}\mu_n(0)\neq 0$. This means that the
system \eqref{i-6a} is genuinely nonlinear with respect to the eigenvalue $\mu_n(w)$. Let $\ell_i(w)$ and $\gamma_i(w)$ be the left and right eigenvectors of the matrix $A(w)$ corresponding to the eigenvalue $\mu_i(w)$  ($1\le i\le n$), respectively.
Together with \eqref{i-7}, without loss of generality, one can assume
\begin{subequations}\label{i-71}\begin{align}
&\ell_i(w)\cdot\gamma_j(w)=\delta_{i}^{j}\ (1\leq i, j\leq n),\label{i-71a}\\
& \gamma_n(w)={\bf e}_n, \ \gamma_i(0)={\bf e}_i,\ \|\gamma_i(w)\|=1\ (1\leq i\leq n-1),\label{i-71b}\\
&\ell_i(0)={\bf e}_i^{\top}\ (1\leq i\leq n),\label{i-71c}
\end{align}
\end{subequations}
where $\|\gamma_i(w)\|=\sqrt{\ds\sum_{k=1}^n(\gamma_{i}^k)^2(w)}$ with $\gamma_i(w)=(\gamma_{i}^1(w), ..., \gamma_{i}^n(w))^{\top}$.

To study the blowup mechanism of smooth solution to problem \eqref{i-6} with a class of large initial data $w_0(x)=(w_{10}, \cdots, w_{n0})(x)$, motivated by \cite{TSV1}-\cite{TSV3}, we choose $w_0(x)$ as follows:

At first, let $w_{n0}(x)$ satisfy the following generic nondegenerate condition at $x=0$:
\begin{equation}\label{i-8}
w_{n0}(0)=\kappa_0\varepsilon^{\frac{1}{3}},\ w_{n0}'(0)=-\frac{1}{\varepsilon}=\min\limits_{x\in\mathbb{R}}w_{n0}'(x),\ w_{n0}''(0)=0,\ w_{n0}'''(0)=\frac{6}{\varepsilon^4},
\end{equation}
where $\kappa_0$ is a suitable constant.

Secondly, in order to derive $L^{\infty}$ estimates for the lower order derivatives of $w$ and track the development of possible singularity, we require such assumptions
of $w_{0}(x)$:
\begin{subequations}\label{i-9}\begin{align}
&|w_{n0}(x)-w_{n0}(0)|\leq 2\varepsilon^{\frac{1}{2}-\frac{1}{30}},\label{i-9a}\\
&|\hat w(x)|\leq \varepsilon^{\frac{3}{2}}\eta^{\frac{1}{6}}(\varepsilon^{-\frac{3}{2}}x),\ \ |\hat w'(x)|\leq \eta^{-\frac{1}{3}}(\varepsilon^{-\frac{3}{2}}x)\quad \text{for $|x|\leq\mathcal{L}\varepsilon^{\frac{3}{2}}$},\label{i-9b}\\
&|\hat w^{(4)}(x)|\leq \varepsilon^{\frac{1}{9}-\frac{9}{2}}\quad\text{for $|x|\leq \varepsilon^{\frac{3}{2}}$},\label{i-9e}\\
&\varepsilon|w_{n0}'(x)|\leq 2\eta^{-\frac{1}{3}}(\varepsilon^{-\frac{3}{2}}x)\quad\text{for $\mathcal{L}\varepsilon^{\frac{3}{2}}\leq|x|\leq 2\mathcal{L}\varepsilon^{\frac{3}{2}}$},\label{i-9c}\\
&\varepsilon|w_{n0}'(x)|\leq \eta^{-1}(\varepsilon^{-\frac{3}{2}}x)\quad\text{for $|x|\geq 2\mathcal{L}\varepsilon^{\frac{3}{2}}$},\label{i-9d}
\end{align}
\end{subequations}
and for $1\leq j\leq n-1$,
\begin{equation}\label{i-10}
|w_{j0}(x)|\leq\varepsilon,\ |w_{j0}'(x)|\leq \eta^{-\frac{1}{3}}(\varepsilon^{-\frac{3}{2}}x),\ |w_{j0}''(x)|\leq \varepsilon^{-\frac{11}{6}}\eta^{-\frac{1}{3}}(\varepsilon^{-\frac{3}{2}}x),
\end{equation}
where $\mathcal{L}=\varepsilon^{-\frac{1}{10}}, \eta(x)=1+x^2$
and $\hat{w}(x)=w_{n0}(x)-\kappa_0\varepsilon^{\frac{1}{3}}-\overline{w}(x)$ with $\overline{w}(x)=\varepsilon^{\frac{1}{2}}\overline{W}(\varepsilon^{-\frac{3}{2}}x)$ and
$\overline{W}(y)=(-\frac{y}{2}+(\frac{1}{27}+\frac{y^2}{4})^{\frac{1}{2}})^{\frac{1}{3}}
-(\frac{y}{2}+(\frac{1}{27}+\frac{y^2}{4})^{\frac{1}{2}})^{\frac{1}{3}}$.

Thirdly, in order to derive the $L^{2}-$energy estimates for the $\mu_0-$order derivatives of $w$,
we demand that
\begin{equation}\label{i-11}
\sum\limits_{j=1}^{n-1}\|\partial_x^{\mu_0}w_{j0}(\cdot)\|_{L^2}+\varepsilon\|\partial_x^{\mu_0}w_{n0}(\cdot)\|_{L^2}\lesssim \varepsilon^{\frac{3}{2}(1-\mu_0)},
\footnote{Hereafter, $A\lesssim B$ means that there exists a generic positive constant $C$ such that $A\leq CB$.}
\end{equation}
where $\mu_0\ge 6$ is a suitably given constant.

Our main results are stated as:
\begin{thm}\label{thmi-1} {\it Under the conditions \eqref{i-7}, and without loss of generality,
$\mu_n(0)=0$ and $\partial_{w_n}\mu_n(0)=1$ are assumed, then there exists a positive constant $\varepsilon_0$
such that when $0<\varepsilon<\varepsilon_0$ and $w_0(x)$ satisfies \eqref{i-8}-\eqref{i-11},
the problem \eqref{i-6} admits a unique local smooth solution $w$, which will firstly
blow up at the point $(x^*, T^*)$.
Moreover,
\begin{enumerate}[$(1)$]
\item $x^{*}=O(\varepsilon^2),\ T^*=O(\varepsilon^{\frac{4}{3}}).$
\item $w$ lies in the following spaces:\begin{equation}\label{i-12}\begin{cases}
w\in C([-\varepsilon, T^*), H^{\mu_0}(\mathbb{R}))\cap C^1([-\varepsilon, T^*), H^{\mu_0-1}(\mathbb{R})),\\[2mm]
w_i\in C^1([-\varepsilon, T^*]\times \mathbb{R})\ (1\leq i\leq n-1),\  w_n\in L^{\infty}([0, T^*], C^{\frac{1}{3}}(\mathbb{R})).
\end{cases}
\end{equation}

\item There exist two smooth functions $\xi(t)$ and $\tau(t)$ such that
\begin{equation}\label{i-13}\begin{cases}
\lim\limits_{t\nearrow T^*}(\xi(t), \tau(t))=(x^*, T^*),\ \lim\limits_{t\nearrow T^*}\partial_x w_n(\xi(t), t)=-\infty,\\[1mm]
-2<(T^*-t)\partial_x w_n(\xi(t), t)<-\frac{1}{2}\quad\text{for $-\varepsilon\leq t<T^*$},\\[1mm]
\text{$|\tau(t)-T^*|\lesssim \varepsilon^{\frac{1}{3}}(T^*-t)$ and $|\xi(t)-x^*|\lesssim \varepsilon (T^*-t)$
for $-\varepsilon\leq t<T^*$}.
\end{cases}
\end{equation}

\end{enumerate}}

\end{thm}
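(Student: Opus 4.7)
The plan is to construct the solution on the maximal time interval $[-\varepsilon, T^*)$ by a bootstrap argument carried out in modulated self-similar coordinates adapted to the genuinely nonlinear mode $\mu_n$. Local existence of a unique smooth $H^{\mu_0}$-solution is standard for quasilinear symmetric hyperbolic systems, so the content lies in propagating precise pointwise and energy bounds up to the first blowup time. Since the $n$-th row of $A(w)$ is decoupled from $\partial_x w_j$ for $j<n$ by \eqref{i-7b} and $\mu_n$ is genuinely nonlinear by \eqref{i-7c}, the component $w_n$ obeys a transport equation along the single characteristic speed $\mu_n(w)$ with a Burgers-type nonlinearity, while the transverse components $w_j$ ($j<n$) are advected by the separated speeds $\mu_j$ in \eqref{i-7a}. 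The shock forms only in $w_n$, and its singular part is captured by the explicit Burgers self-similar profile $\overline{W}$ (the Cardano root of $\overline{W}^3+\overline{W}+y=0$) that already appears in the decomposition $w_{n0}=\kappa_0\varepsilon^{1/3}+\overline{w}+\hat w$ of the data.

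Concretely, I would introduce modulation functions $\xi(t),\tau(t),\kappa(t)$ and self-similar variables $y=(x-\xi(t))/(\tau(t)-t)^{3/2}$, $s=-\log(\tau(t)-t)$, and write
\begin{equation*}
w_n(x,t)=\kappa(t)+(\tau(t)-t)^{1/2}W(y,s),\qquad w_j(x,t)=V_j(y,s),\ 1\le j\le n-1.
\end{equation*}
In these variables $W$ satisfies a rescaled Burgers equation whose unique stationary attractor is $\overline{W}$, perturbed by lower-order couplings to $V_j$ that are harmless thanks to the eigenvalue separation. The three modulation functions $(\xi,\tau,\kappa)$ are determined by imposing three normalization conditions at $y=0$ (namely $W(0,s)=0$, $\partial_yW(0,s)=-1$, $\partial_y^2W(0,s)=0$) that kill the three unstable directions of the linearization about $\overline{W}$ and reproduce \eqref{i-8}. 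I would then set up a bootstrap combining: (a) pointwise bounds on $\widetilde W=W-\overline{W}$ and its first few derivatives in the inner zone $|y|\lesssim\mathcal{L}$; (b) weighted bounds $|\hat W|\lesssim\eta^{1/6}$, $|\partial_y\hat W|\lesssim\eta^{-1/3}$, etc., in the outer zone, exactly mirroring \eqref{i-9}; (c) transport-based bounds on $V_j$ carried along the non-shock characteristics, using the spectral gap in \eqref{i-7a} together with \eqref{i-10}; and (d) weighted $L^2$-energy estimates on $\partial_x^{\mu_0}w$ in which a damping term is extracted from the genuine nonlinearity and compensates the growth produced by lower-order commutators at the scale dictated by \eqref{i-11}.

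The main technical obstacle is the simultaneous control of the modulation ODEs for $(\xi,\tau,\kappa)$, the pointwise estimates for $W$ in both inner and outer zones under incompatible weights, and the top-order $L^2$-energy that must absorb the transverse components $w_j$ and the non-decoupled lower-order terms created by the general matrix $A(w)$ with large values of $w_n$. Matching at $|y|\sim\mathcal{L}=\varepsilon^{-1/10}$ is delicate because the two regimes use different norms, and the logarithmic loss coming from the damping coefficient in the self-similar equation has to be beaten by the polynomial gain in $\eta(y)=1+y^2$. Once the bootstrap closes, the items in the theorem follow directly: $x^*$ and $T^*$ are the limits of $\xi(t)$ and $\tau(t)$, whose scales $O(\varepsilon^2)$ and $O(\varepsilon^{4/3})$ come from the modulation ODEs driven by data of size \eqref{i-8}--\eqref{i-10}; the regularity \eqref{i-12} follows from the $H^{\mu_0}$ bound together with $w_n-\kappa(t)\sim (\tau-t)^{1/2}\overline{W}(y)$ and $\overline{W}(y)\sim |y|^{1/3}$, which yields the $C^{1/3}$ bound as $t\nearrow T^*$; and the slope identity $\partial_xw_n(\xi(t),t)=(\tau(t)-t)^{-1}\partial_yW(0,s)=-(\tau(t)-t)^{-1}$ combined with $|\tau(t)-T^*|\lesssim\varepsilon^{1/3}(T^*-t)$ produces \eqref{i-13}.
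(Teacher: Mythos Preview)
Your proposal is essentially the paper's proof: the same modulated self-similar coordinates, the same Burgers profile $\overline{W}$, the same three modulation functions $(\xi,\tau,\kappa)$ fixed by normalizations at $y=0$, and the same bootstrap scheme combining pointwise inner/outer estimates on $W-\overline{W}$, transport estimates for the transverse components along the $\mu_m$-characteristics, and top-order $L^2$ energy control. Two technical refinements in the paper are worth noting against your sketch: the inner/outer boundary is not fixed at $|y|\sim\mathcal{L}$ but moves outward as $|y|=\mathcal{L}\varepsilon^{1/4}e^{s/4}$, and the closing of the $H^{\mu_0}$ energy for the good components $W_j$ ($j<n$) does not rely on damping from the genuine nonlinearity but rather on a spatial weight $e^{q_m(y)}$ that converts the $O(e^{s/2})$ transverse speed $\mu_m(0)\neq 0$ into a large coercive term on the compact set where the weighted error $\eta^{-1/3}W^j_{\mu_0}$ is not already small.
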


\begin{rem}\label{remi-H1} {\it By Theorem \ref{thmi-1}, we know that the solution $w\in C(\Bbb R\times [0, T^*])$ of \eqref{i-6a}
blows up at the point $(x^*, T^*)$, i.e., $\lim\limits_{t\nearrow T^*}\|\p_{t,x}w(\cdot, t)\|_{C(\Bbb R)}=\infty$.
This corresponds to the geometric blowup for the problem \eqref{i-6}.}
\end{rem}

\begin{rem}\label{rem1-1} {\it The assumptions of $\mu_n(0)=0$ and $\partial_{w_n}\mu_n(0)=1$ in Theorem \ref{thmi-1}
can be realized by the translation $(t, x)\mapsto (t, x+\mu_n(0)t)$ and then the spatial scaling  $x\mapsto \partial_{w_n}\mu_n(0) x$.}
\end{rem}

\begin{rem}\label{rem1-2} {\it We now give some comments on \eqref{i-6}-\eqref{i-7}.
It is not difficult to find that there are  a great number of $w_0(x)$ to fulfill the constrains \eqref{i-8}-\eqref{i-11}. In addition,
it follows from Proposition \ref{lemA-1} that  the unknown $w$
admits the good components $(w_1, \cdots, w_{n-1})$ and the bad component $w_n$.  The conditions \eqref{i-9b}-\eqref{i-9c} imply that the bad component $w_n$  mainly tracks the possible singularity and it can be thought as a suitable perturbation of the
singular function $\overline{W}$. On the other hand, in order to control the detailed behaviors of $w_n$
near the possible blowup point, we posed the suitable perturbation for the fourth order derivatives of $\hat w$ in \eqref{i-9e} when $|x|\leq \varepsilon^{\frac{3}{2}}$. The conditions \eqref{i-9a} and \eqref{i-9d} are posed to control the behavior of $w_n$
 away from the blowup position. In addition, to avoid the influence of the initial data $w_0(x)$ at infinity,
 we naturally pose the appropriate decaying condition \eqref{i-10}-\eqref{i-11} for large $|x|$.}
\end{rem}

\begin{rem}\label{remi-3} {\it In \cite{TSV1}-\cite{TSV3}, through  introducing
suitable modulated coordinates and taking the constructive proofs, the authors  systematically study the shock formation
of multidimensional compressible Euler equations with a class of smooth initial data.  Motivated by these papers,
we study the geometric blowup mechanism of problem \eqref{i-1}, whose nonlinear structure is more general than
the 1D compressible Euler equations. Thanks to the new reformulation  in the equivalent problem \eqref{i-6} as well as \eqref{i-7},
we can establish some  suitable  exponential-growth  controls on the bounds of the characteristics corresponding to
$\mu_i(w)\ (1\leq i\leq n-1)$  (see Lemma \ref{lem7-1} and Lemma \ref{lem7-2} below) such that
the problem \eqref{i-6} can be mainly dominated by the approximate Burgers equation of $w_n$.}
\end{rem}

\begin{rem}\label{remi-H2}
{\it When \eqref{i-6a} admits the structure of conservation laws, there are some interesting works on the shock construction through the first-in-time blowup point $(x^*, T^*)$ for $t\ge T^*$.
For instances,  under various nondegenerate conditions with finite orders or infinitely degenerate conditions
for the initial data, the shock construction from the blowup point is completed for the 1D scalar equation
$\p_tu+\p_x(f(u))=0$ in  \cite{YL1}; under the generic nondegenerate condition of initial data,
the shock surface from the blowup curve has been constructed for the multidimensional scalar equation
$\p_tu+\p_1(f_1(u))+\cdot\cdot\cdot+\p_n(f_n(u))=0$ in \cite{YL2}; under the generic nondegenerate conditions
of the initial data, for the 1-D $2\times 2$ $p-$ system of
polytropic gases, the authors in \cite{K1},\cite{LB} and \cite{CD} obtain the formation and  construction
of the shock wave starting from the blowup point under some variant conditions; for the 1-D $3\times 3$
strictly hyperbolic conservation laws with the small initial data or the 3-D full compressible Euler equations with
symmetric structure and small perturbed initial data,
the authors in \cite{CXY}, \cite{YHC} and \cite{CD2} also get the formation and  construction
of the resulting shock waves, respectively. In the near future, we hope that the shock formation can
be constructed from the blowup point $(x^*, T^*)$ in Theorem \ref{thmi-1} when \eqref{i-6a} has
the  structure of conservation laws.}
\end{rem}

\begin{rem}\label{remi-2} {\it In the recent years, the formation of shock waves have made much progress
 for the multi-dimensional Euler equations and the quasilinear wave equations under various restrictions
 on the related initial data. One can see
 the remarkable articles \cite{TSV1}-\cite{TSV4}, \cite{CD1}, \cite{CD3}-\cite{CD4}, \cite{LS},
\cite{MY} and \cite{SJ}.}
\end{rem}

\subsection{Comments on the proof of Theorem \ref{thmi-1}}
Let us give comments on the proof of Theorem \ref{thmi-1}.
Motivated by \cite{TSV3}, by introducing the modulated coordinate which is smooth before the singularity formation,
we can convert the finite time singularity formation of \eqref{i-6} into the global well-posedness of smooth
solutions to the resulting new system of $W=(W_1, \cdot\cdot\cdot, W_n)^{\top}$ (see \eqref{ii-4}-\eqref{ii-5}).
To achieve this aim, we take the following strategies:

{\bf $\bullet$} Due to the important form of \eqref{i-6a} with \eqref{i-7}, we divide  $W$ as the $n-1$ good
components $(W_1, \cdots, W_{n-1})^{\top}$
and the bad unknown $w_n$. Inspired by \cite{TSV2}, we continue to decompose $w_n$ into another bad part $W_0$ and a good part $\kappa(t)$
(see \eqref{ii-3}).
The $L^{\infty}$ estimates for the lower order derivatives of $W_0$ are carried out in two different domains  $\{(y, s): |y|\leq \mathcal{L}\varepsilon^{\frac{1}{4}}e^{\frac{s}{4}}\}$ and $\{(y, s): |y|\geq \mathcal{L}\varepsilon^{\frac{1}{4}}e^{\frac{s}{4}}\}$ with $\mathcal{L}=\varepsilon^{-\frac{1}{10}}$. In the interior domain $\{(y, s): |y|\leq \mathcal{L}\varepsilon^{\frac{1}{4}}e^{\frac{s}{4}}\}$,
$W_0$ is expected to have the similar behavior as $\overline{W}(y)$, which is the steady solution of 1D Burgers type equation
$(\partial_s-\frac{1}{2})\overline{W}+\left(\frac{3}{2}y+\overline{W}\right)\partial_y\overline{W}=0$.
In the exterior domain $\{(y, s): |y|\geq \mathcal{L}\varepsilon^{\frac{1}{4}}e^{\frac{s}{4}}\}$, the treatment of $W_0$
is rather delicate since the temporal and spatial decay estimates of good components $(W_1, \cdots, W_{n-1})^{\top}$ are
required to be established simultaneously.

{\bf $\bullet$}  Due to the partially decoupling form of \eqref{i-6a}, in order to
prove Theorem \ref{thmi-1},  we need to  establish the $L^{\infty}$
estimates of the lower order derivatives  and the $L^2$ estimates of the
highest order derivatives for  $(W_1, \cdots, W_{n})^{\top}$. To get the related $L^{\infty}$ estimates,
by utilizing the characteristics method and delicate analysis, at first, we derive
the basic  exponential controls on  the bounds of the characteristics corresponding to
$\mu_i(w)\ (1\leq i\leq n-1)$. Subsequently,  the spatial decay rate $(1+y^2)^{-\frac{1}{3}}$ of $\partial_y w_0$
and further the temporal decay of $\partial_y W_0$ are obtained. From these, the $L^{\infty}$ estimates of $(W_1, \cdots, W_{n})^{\top}$
are achieved. On the other hand, we observe that the coefficients in the equations of $w$
admit the key $O(e^{\frac{s}{2}})$ scale because of the strict hyperbolicity of \eqref{i-6a} (see \eqref{v-35}).
This will lead to the expected $L^2$ estimates on the
highest order derivatives of  $(W_1, \cdots, W_{n})^{\top}$. Here, we specially point out that the $L^{\infty}$
estimates of each related quantity depend on the information of the higher order derivatives of $W$
since  the related \eqref{i-6a} only admits the partial decoupling form.
This is the main reason to apply the $L^2$ estimates for dealing with the highest order derivatives of $W$.

When these are done, the proof of Theorem \ref{thmi-1} can be  completed successfully.  It is hoped that
our analysis methods in the paper will be adopted to study the singularity formation problem
for the general multi-dimensional symmetric hyperbolic systems with some classes of large initial data,
which is a generalization of the results in \cite{TSV1}-\cite{TSV4} for the  multi-dimensional compressible
Euler equations.

The rest of the paper is arranged as follows: In Section \ref{II}, we reduce the problem \eqref{i-1} into
the equivalent partially decoupling problem \eqref{i-6} via Proposition \ref{lemA-1}.  In Section \ref{ii},
under the modulated coordinate, the problem \eqref{i-6} and the choice of initial data $w_0(x)$
are reformulated. Moreover, as the heuristics of the formation of the expected singularity,
the rigorous derivation on the resulting Burgers-type equation is also given in this section.
The bootstrapping assumptions and their closure of the arguments are
arranged in Section \ref{iv}-Section \ref{v} respectively: The descriptions of bootstrapping assumptions
on $w$ and the modulated coordinate are made in Section \ref{iv};
the $L^{\infty}$ estimates for the bad unknown $W_n$ and the good components $(W_1,\cdot\cdot\cdot,
W_{n-1})^{\top}$ are taken in Section \ref{vi}-Section \ref{vii} respectively. In addition,
the closure of bootstrapping assumptions for the modulation variables is completed in Section \ref{viii};
the related energy estimates for the higher order derivatives of $W$ are derived in Section \ref{v}.
In Section \ref{V}, we establish the main results in Theorem \ref{thmii-1} and further Theorem \ref{thmi-1}.
Finally, a useful interpolation inequality and its application for deriving some delicate estimates
are given in Appendix \ref{A}.

\section{Reduction}\label{II}

In the section, our main aim is to reduce \eqref{i-1a} to a partially decoupling form \eqref{i-6a}
such that the resulting new unknown functions $w=(w_1, \cdot\cdot\cdot, w_n)^{\top}$
will admit $n-1$ good components and only one bad component. The good  component
and the bad  component mean that their regularities are in  $C^1$ and  in  $C^{1/3}$
up to the blowup time, respectively.

\begin{prop}\label{lemA-1} {\it Under assumptions \eqref{i-2}-\eqref{i-3}, there exists a constant $\delta_0>0$
such that when $|u|<\delta_0$, the system \eqref{i-1a} can be equivalently reduced  into
\begin{equation}\label{A-1}
\partial_{t}w+A(w)\partial_{x} w=0,
\end{equation}
where the smooth mapping $u\mapsto w=w(u)$ is invertible and $w(0)=0$.
In addition, the inverse mapping of $w(u)$ is denoted as $u=u(w)$, and the $n\times n$ matrix
\begin{equation*}
A(w)=\left(\frac{\partial w}{\partial u}\right)F(u(w))\left(\frac{\partial w}{\partial u}\right)^{-1}:=\left(a_{ij}(w)\right)_{n\times n}
\end{equation*} satisfies
\begin{enumerate}[$(1)$]%\label{A-2}
\item $A(w)$ has $n$ distinct eigenvalues $\left\{\mu_i(w)\right\}_{i=1}^{n}$ with
\begin{equation*}
\mu_i(w)=\lambda_i(u(w))\ (1\leq i<i_0);\ \mu_i(w)=\lambda_{i+1}(u(w))\ (i_0\leq i<n);\ \mu_n(w)=\lambda_{i_0}(u(w)).
\end{equation*}
\item $a_{in}(w)=0\ (i\neq n),\ a_{nn}(w)=\mu_{n}(w)$ and $\partial_{w_{n}}\mu_{n}(w)\neq 0$.
\item $A(0)=\text{diag}\{\mu_1(0), \cdots, \mu_n(0)\}$.
\end{enumerate}}

\end{prop}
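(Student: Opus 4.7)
\textbf{Proof proposal for Proposition~\ref{lemA-1}.}

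The structural condition $(2)$ says precisely that $e_n$ is a right eigenvector of $A(w)$ with eigenvalue $\mu_n(w)=\lambda_{i_0}(u(w))$. Writing $J(u)=\partial w/\partial u$ and $K(w)=\partial u/\partial w=J(u(w))^{-1}$, one has $A(w)=J(u)F(u)K(w)$ and $Ke_n=\partial u/\partial w_n$, so $(2)$ is equivalent to requiring that $\partial u/\partial w_n$ be a right eigenvector of $F(u)$ for the eigenvalue $\lambda_{i_0}(u)$. The plan is to build the change of variables $w(u)$ by straightening the smooth nonvanishing vector field $\gamma_{i_0}(u)$.

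First I would fix an affine hyperplane $\Sigma$ through $u=0$ transverse to $\gamma_{i_0}(0)$ and parametrize it by coordinates $(w_1,\ldots,w_{n-1})$ whose coordinate tangent vectors at the origin agree with the remaining right eigenvectors of $F(0)$, reindexed as $\gamma_j(0)$ for $j<i_0$ and $\gamma_{j+1}(0)$ for $i_0\leq j\leq n-1$. Next, take $w_n$ to be the flow parameter along $\gamma_{i_0}$, normalized so that $\partial u/\partial w_n|_{w=0}=\gamma_{i_0}(0)$, and extend $w_1,\ldots,w_{n-1}$ off $\Sigma$ by the rule $\gamma_{i_0}\cdot\nabla_u w_j\equiv 0$ (i.e., as first integrals of $\gamma_{i_0}$). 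This is the standard flow-box construction. The matrix $K(0)$ then has $\gamma_1(0),\ldots,\gamma_n(0)$ (in the reindexed order) as columns, which are linearly independent because $F(0)$ has distinct eigenvalues by \eqref{i-2}; hence $w(u)$ is a local $C^\infty$ diffeomorphism sending $0$ to $0$ and admits a smooth inverse $u(w)$ on a neighborhood $|w|<\delta_0'$.

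With this $w$ in hand, properties $(1)$--$(3)$ follow by direct computation. Equivalence of the two systems is immediate since $J$ is invertible. By construction $Ke_n$ is parallel to $\gamma_{i_0}(u(w))$, so
\begin{equation*}
Ae_n = JFKe_n=\lambda_{i_0}(u(w))\,JKe_n=\lambda_{i_0}(u(w))\,e_n,
\end{equation*}
which gives $a_{in}=0$ for $i<n$ and $a_{nn}=\mu_n(w)=\lambda_{i_0}(u(w))$. Differentiating $\mu_n(w)=\lambda_{i_0}(u(w))$ in $w_n$ and applying the genuine nonlinearity \eqref{i-3} yields $\partial_{w_n}\mu_n(0)\neq 0$, hence nonvanishing on a neighborhood, completing $(2)$. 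Since $A$ and $F$ are similar, they share the spectrum $\{\lambda_i(u(w))\}_{i=1}^n$, which gives the reindexing stated in $(1)$. Finally, at $w=0$ the columns of $K(0)$ are eigenvectors of $F(0)$ matched with the reordered eigenvalues, so $A(0)=J(0)F(0)K(0)=\text{diag}(\mu_1(0),\ldots,\mu_n(0))$, which is $(3)$. The only mildly delicate step is organizing the transverse coordinates on $\Sigma$ so that both the last-column structure of $A$ and the diagonality of $A(0)$ hold simultaneously, but this is handled uniformly by the specific choice above of the transverse coordinate directions at $0$.
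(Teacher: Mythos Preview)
Your proof is correct and rests on the same core idea as the paper's: straighten the eigenvector field $\gamma_{i_0}$ so that in the new coordinates it becomes $e_n$, which forces the last column of $A(w)$ to be $\mu_n(w)e_n$. The paper executes this in two stages---first building $n-1$ Riemann invariants $\alpha_i$ with $\nabla_u\alpha_i\cdot\gamma_{i_0}=0$ together with a transverse linear coordinate $v_n=\gamma_{i_0}(0)^{\top}u$, and then applying a further constant linear change $w=Bv$ to diagonalize $A(0)$---whereas you collapse both steps into one by invoking the flow-box theorem and choosing the transverse coordinate directions at the origin to be the remaining eigenvectors $\gamma_j(0)$ from the start. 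Your organization is more economical; the paper's is more explicit about how the Riemann invariants are actually solved for (via the transport equations \eqref{B-3}), which matters if one wants quantitative smallness of the correction $\bar\alpha_i$.
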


\begin{proof} At first, we claim that when  $|u|\leq \delta_0$ for some constant $\delta_0>0$, there exist $(n-1)$
linearly independent Riemann invariants $\alpha_i(u)\ (i\neq i_0)$ corresponding to
$\lambda_{i_0}(u)$ such that
\begin{equation}\label{B-2}
\nabla_u \alpha_i(u)\cdot \gamma_{i_0}(u)=0\ (i\neq i_0,\ |u|<\delta_0).
\end{equation}
Indeed, let $\{\zeta_i\}_{(i\neq i_0)}$ be $(n-1)$ linearly independent column vectors orthogonal to $\gamma_{i_0}(0)$ and set $\alpha_i(u)=\zeta_i^{\top}\cdot u+\bar{\alpha}_i(u)$, then it follows from \eqref{B-2} that the
unknowns $\{\bar{\alpha}_i(u)\}_{i\neq i_0}$ should satisfy
\begin{equation}\label{B-3}
\nabla_u \bar{\alpha}_i(u)\cdot\gamma_{i_0}(u)=-\zeta_i^{\top}\cdot(\gamma_{i_0}(u)-\gamma_{i_0}(0)),\ \bar{\alpha}_i(0)=0\ (i\neq i_0).
\end{equation}
It is not difficult to find that there exists a constant $\delta_0>0$ such that \eqref{B-3} is uniquely solved when $|u|<\delta_0$ and $|\bar{\alpha}_i(u)|\lesssim |u|^2$. Hence \eqref{B-2} is obtained.

By $\alpha_i(0)=0\ (i\neq i_0)$, we define a mapping $u\mapsto v=v(u)=(v_1, \cdots, v_n)^{\top}(u)$ with $v(0)=0$ as
\begin{equation}\label{B-4}
v_i=\alpha_i(u)\ (1\leq i< i_0);\quad  v_i=\alpha_{i+1}(u)\ (i_0\leq i<n);\quad v_{n}=\gamma_{i_0}^{\top}(0)\cdot u.
\end{equation}
Note that $\{\zeta_i\}_{i\neq i_0}$ are $(n-1)$ linearly independent column vectors
which are orthogonal to $\gamma_{i_0}(0)$. Then the transformation $u\mapsto v=v(u)$ is reversible for $|u|<\delta_0$
since its Jacobian matrix $J_{v}(u)$ satisfies $J_{v}(0)=\left(\frac{\partial v}{\partial u}\right)|_{u=0}
=\left(\zeta_1,\cdots, \zeta_{i_0-1}, \zeta_{i_0+1}, \cdots, \zeta_n, \gamma_{i_0}(0)\right)^{\top}$ and $J_{v}(0)$
is non-singular. We now denote the inverse mapping of $v=v(u)$ as $u=u(v)$.

By \eqref{B-2} and \eqref{B-4}, the system \eqref{i-1a} is equivalently converted into
\begin{equation}\label{B-5}
\partial_t v+G(v)\partial_x v=0,
\end{equation}
where $G(v)=J_{v}(u)F(u(v))J_{v}^{-1}(u):=\left(g_{ij}(v)\right)_{n\times n}$,
$G(v)$ has $n$ distinct eigenvalues $\{\lambda_i(u(v))\}_{i=1}^{n}$ and the corresponding right
eigenvectors are $\{J_{v}(u)\gamma_i(u(v))\}_{i=1}^{n}$. In addition, \eqref{B-2} shows
\begin{equation}\label{A-6}
J_{v}(u)\gamma_{i_0}(u(v))=\gamma_{i_0}^{\top}(0)\cdot\gamma_{i_0}(u(v)){\bf e}_{n}\neq {\bf 0}.
\end{equation}
This implies that
\begin{equation}\label{A-7}
g_{i n}(v)=0\ (i\neq n),\quad g_{n n }(v)=\lambda_{i_0}(u(v)).
\end{equation}
It follows from \eqref{B-5} and \eqref{A-7} that the $(n-1)$ order square matrix
\begin{equation}\label{A-9}
G_{n-1}(v)=\left(g_{ij}(v)\right)_{(n-1)\times (n-1)}
\end{equation}
has $(n-1)$ eigenvalues
\begin{equation*}\label{A-10}
\lambda_1(u(v))<\cdots<\lambda_{i_0-1}(u(v))<\lambda_{i_0+1}(u(v))<\cdots<\lambda_n(u(v)).
\end{equation*}
Then there exists a unique $(n-1)$ order invertible constant square matrix $B_{n-1}=\left(b_{ij}\right)_{(n-1)\times (n-1)}$
such that
\begin{equation}\label{B-10}
\bar{G}_{n-1}(v)=B_{n-1}G_{n-1}(v)B_{n-1}^{-1}:=\left(\bar{g}_{ij}(v)\right)_{(n-1)\times (n-1)},
\end{equation}
where
\begin{equation*}
\bar{G}_{n-1}(0)=\text{diag}\{\lambda_1(0),\cdots, \lambda_{i_0-1}(0), \lambda_{i_0+1}(0), \cdots, \lambda_n(0)\}.
\end{equation*}
Furthermore, it is derived from \eqref{i-3} and \eqref{A-6}-\eqref{A-7} that
\begin{equation}\label{A-8}
\nabla_{v}\lambda_{i_0}(u(v))J_{v}(u)\gamma_{i_0}(u(v))=\nabla_{u}\lambda_{i_0}(u)\cdot\gamma_{i_0}(u)\neq 0.
\end{equation}
Denote the invertible transformation $v\mapsto w=w(v):=(w_1, \cdots, w_n)^{\top}(v)$ as
\begin{equation}\label{A-11}
(w_1, \cdots, w_{n-1})^{\top}=B_{n-1}(v_1, \cdots, v_{n-1})^{\top},\quad w_n=v_n+\sum\limits_{j=1}^{n-1}b_{nj}w_j,
\end{equation}
where the constants $\{b_{nj}\}_{j=1}^{n-1}$ will be determined later. Set the inverse mapping of $w=w(v)$ as $v=v(w)$ with $v(0)=0$.
By \eqref{B-5}, \eqref{A-9}-\eqref{A-11} and a direct computation, we arrive at
\begin{equation}\label{A-12}
\partial_t w+A(w)\partial_x w=0,
\end{equation}
where $A(w)=\left(a_{ij}(w)\right)_{n\times n}$ satisfies
\begin{equation}\label{A-13}\begin{cases}
a_{ij}(w)=\bar{g}_{ij}(v(w))\ (1\leq i, j\leq n-1),\\[2mm]
a_{jn}(w)=0\ (1\leq j\leq n-1), \\[2mm]
a_{nn}(w)=g_{nn}(v(w))
\end{cases}
\end{equation}
and
\begin{equation}\label{A-14}\begin{aligned}
&(a_{n 1}, \cdots, a_{n n-1})(w)\\
=&(g_{n1}, \cdots, g_{n n-1})(v(w))B_{n-1}
+(b_{n1 }, \cdots, b_{n n-1})(\bar{G}_{n-1}-g_{n n}I_{n-1})(v(w)).
\end{aligned}
\end{equation}
Since the $(n-1)$ order square matrix
\begin{equation*}
\bar{G}_{n-1}(0)-g_{n n}(0)I_{n-1}=\text{diag}\{\lambda_1(0)-\lambda_{i_0}(0), \cdots \lambda_{i_0-1}(0)-\lambda_{i_0}(0), \lambda_{i_0+1}(0)-\lambda_{i_0}(0), \cdots, \lambda_{n}(0)-\lambda_{i_0}(0)\}
\end{equation*}
is invertible due to \eqref{i-2} and \eqref{A-7}, it is derived from \eqref{A-14} that there exists unique $\{b_{n j}\}_{1\leq j\leq n-1}$
such that
\begin{equation}\label{A-15}
(a_{n 1}, \cdots, a_{n, n-1})(0)=(0, \cdots, 0).
\end{equation}

For the constant invertible  square matrix $B=\left(b_{ij}\right)_{n\times n}$ with $b_{i n}=\delta_{i}^{n}\ (1\leq i\leq n)$ and
\begin{equation}\label{A-16}
w=Bz,
\end{equation}
one has from \eqref{A-11}-\eqref{A-14} that
\begin{equation}\label{A-17}
A(w)=BG(B^{-1}w)B^{-1}:=(a_{ij}(w))_{n\times n},
\end{equation}
where
\begin{equation*}
A(0)=BG(0)B^{-1}=\text{diag}\{\lambda_1(0), \cdots, \lambda_{i_0-1}(0), \lambda_{i_0+1}(0), \cdots, \lambda_n(0), \lambda_{i_0}(0)\}.
\end{equation*}

The expected invertible mapping $u\mapsto w=w(u)$ is just the composition of two mappings $v=v(u)$ and $w=w(v)$ defined by \eqref{B-4} and \eqref{A-16} respectively. Its inverse mapping is denoted as $u=u(w)$. It is easy to know that the matrix $A(w)$ has $n$
distant eigenvalues $\{\lambda_i(w(u))\}_{i=1}^n$ and the corresponding right eigenvectors are $\{J_w(u)\gamma_i(u(w))\}_{i=1}^n$.
In addition, it is derived from \eqref{A-6}, \eqref{A-8} and \eqref{A-11} that
\begin{equation}\label{A-18}
J_w(u)\gamma_{i_0}(u(w))=B J_v(u)\gamma_{i_0}(u(v))=\gamma_{i_0}^{\top}(0)\cdot\gamma_{i_0}(u(v)){\bf e}_n\neq {\bf 0}
\end{equation}
and
\begin{equation}\label{A-19}\begin{aligned}
&\nabla_w\lambda_{i_0}(u(w))\cdot J_w(u)\gamma_{i_0}(u(w))\\
=&\nabla_v\lambda_{i_0}(u(v))\cdot J_v(w)\cdot J_w(v)\cdot J_v(u)\gamma_{i_0}(u(v))\\
=&\nabla_v\lambda_{i_0}(u(v))\cdot J_v(u)\gamma_{i_0}(u(v))\neq 0.
\end{aligned}
\end{equation}
Then the properties (1)-(3) of $A(w)$ come from \eqref{A-9}-\eqref{B-10}, \eqref{A-13}, \eqref{A-15}
and \eqref{A-18}-\eqref{A-19}.
\end{proof}

\begin{rem}\label{YH-1} {\it We point out that Proposition \ref{lemA-1} is a generalization of Lemma 2.1
in \cite{CXY}, where \eqref{i-1} with small initial data and $n=3$ is simplified analogously.}
\end{rem}

\section{Reformulation under the modulated coordinates}\label{ii}

Motivated by \cite{TSV2}, to show the geometric blowup mechanism in Theorem \ref{thmi-1}, we introduce
three modulation variables $\tau(t),\ \xi(t)$ and $\kappa(t)$ as
\begin{equation}\label{ii-1}\begin{cases}
\tau(t):\quad \text{tracking the exact blowup time},\\
\xi(t): \quad \text{tracking the location of the blowup point},\\
\kappa(t):\quad \text{fixing the speed of the singularity development}.
\end{cases}
\end{equation}
Set the modulated coordinate $(y, s)$ as follows
\begin{equation}\label{ii-2}
s=s(t)=-\log(\tau(t)-t),\quad y=(x-\xi(t))(\tau(t)-t)^{-\frac{3}{2}}=(x-\xi(t))e^{\frac{3s}{2}}.
\end{equation}
In addition,  the new unknowns $W=(W_1, \cdots, W_n)^{\top}$ and $W_0$ are defined as
\begin{equation}\label{ii-3}
w_i(x, t)=W_i(y, s) (i\neq n),\quad w_{n}(x, t)=W_n(y, s)=e^{-\frac{s}{2}}W_{0}(y, s)+\kappa(t),
\end{equation}
where $\kappa(t)=w_n(\xi(t),t)$.

By \eqref{ii-1}-\eqref{ii-3},   when $t<\tau(t)$, the system \eqref{i-6a} can be equivalently rewritten as
\begin{equation}\label{ii-4}
\partial_s W+\left(\frac{3}{2}y-e^{\frac{s}{2}}\beta_{\tau}\dot{\xi}(t)\right)\partial_y W
+e^{\frac{s}{2}}\beta_{\tau}A(w)\partial_y W=0,
\end{equation}
where $\beta_{\tau}(t)=\frac{1}{1-\dot{\tau}(t)}$, $\dot{\xi}(t)=\xi'(t)$ and $\dot{\tau}(t)={\tau}'(t)$.

In addition, $W_0$ is determined by
\begin{equation}\label{ii-5}
(\partial_s-\frac{1}{2})W_{0}+\left(\frac{3}{2}y+e^{\frac{s}{2}}\beta_{\tau}(\mu_{n}(w)
-\dot{\xi}(t))\right)\partial_y W_{0}+\sum\limits_{j\neq n}e^{s}\beta_{\tau}a_{nj}(w)\partial_y W_j
=-e^{-\frac{s}{2}}\beta_{\tau}\dot{\kappa}(t),
\end{equation}
where $\dot{\kappa}(t)=\kappa'(t)$.

\subsection{Global steady solution for 1D Burgers equation}\label{ii-a}

The simplest case of problem \eqref{i-6} is (i.e., $n=1$ and $a_n(w)=w_n$)
\begin{equation}\label{A-1}\begin{cases}
\partial_t\overline{\omega}+\overline{\omega}\partial_x\overline{\omega}=0,\ x\in\mathbb{R}, \ t>-\varepsilon,\\
\overline{\omega}(x, -\varepsilon)=\overline{\omega}_0(x),\ x\in\mathbb{R},
\end{cases}
\end{equation}
where $\overline{\omega}_0(x)\in C^{\infty}(\mathbb{R})$ and $\overline{\omega}_0'(x)\leq 0$.
It is assumed that $\overline{\omega}_0(x)$ satisfies the generic nondegenerate condition at $x=0$:
\begin{equation}\label{A-2}
\overline{\omega}_0(0)=0,\ \overline{\omega}_0'(0)=-\frac{1}{\varepsilon}=\min\limits_{x\in\mathbb{R}}\overline{\omega}_0'(x),\ \overline{\omega}_0''(0)=0,\ \overline{\omega}_0'''(0)=\frac{6}{\varepsilon^4}.
\end{equation}
By the  characteristics method, it is easy to know that under the assumption \eqref{A-2},
the smooth solution $\overline{\omega}(x, t)$ of problem \eqref{A-1} will blowup at the first-in-time
singularity point $(0, 0)$ and the related characteristics starting from
the point $(0, -\varepsilon)$ is $\{(x, t): x=0, \ -\varepsilon<t<0\}$.
From this and the procedures in \eqref{ii-1}-\eqref{ii-5}, we define
\begin{equation}\label{A-3}
\tau_0(t)=0,\ \xi_0(t)=0,\ \kappa_0(t)=0,
\end{equation}
and
\begin{equation}\label{A-4}
s=s(t)=-\log(\tau_0(t)-t),\ y=(x-\xi_0(t))(\tau_0(t)-t)^{-\frac{3}{2}}=(x-\xi_0(t))e^{\frac{3s}{2}}
\end{equation}
and
\begin{equation}\label{A-5}
\overline{\omega}=e^{-\frac{s}{2}}\overline{W}(y, s)+\kappa_0(t).
\end{equation}

Then it follows from \eqref{A-1} and \eqref{A-3}-\eqref{A-5} that $\overline{W}(y, s)$ satisfies
\begin{equation}\label{ii-6}
(\partial_s-\frac{1}{2})\overline{W}+\left(\frac{3}{2}y+\overline{W}\right)\partial_y\overline{W}=0.
\end{equation}

In Appendix A.1 of \cite{TSV2}, it is proved that equation \eqref{ii-6} has a group of steady
smooth solutions $\overline{W}=\overline{W}(y)$ satisfying such a generic nondegenerate condition
\begin{equation}\label{ii-7}
\overline{W}(0)=0,\ \overline{W}'(0)=\min\limits_{y\in\mathbb{R}}\overline{W}'(y)<0,\ \overline{W}''(0)=0,\ \overline{W}'''(0)>0.
\end{equation}
According to the initial data \eqref{A-2} and the transformation \eqref{A-3}-\eqref{A-4},
the solution $\overline{W}(y)$ of \eqref{ii-6} is introduced in \cite{TSV2}
\begin{equation}\label{ii-8}
\overline{W}(y)=\left(-\frac{y}{2}+\left(\frac{1}{27}+\frac{y^2}{4}\right)^{\frac{1}{2}}\right)^{\frac{1}{3}}
-\left(\frac{y}{2}+\left(\frac{1}{27}+\frac{y^2}{4}\right)^{\frac{1}{2}}\right)^{\frac{1}{3}}.
\end{equation}

 In addition, it is easy to obtain
 \begin{subequations}\label{ii-9}\begin{align}
 &\overline{W}(0)=0,\ \ \ \overline{W}'(0)=-1,\ \  \ \overline{W}''(0)=0,\ \ \ \overline{W}'''(0)=6,\label{ii-9a}\\
& \|\eta^{-\frac{1}{6}}\overline{W}\|_{L^{\infty}}\leq 1,\ -1\leq\eta^{\frac{1}{3}}\overline{W}'\leq -\frac{1}{6},\ \|\eta^{\frac{5}{6}}\overline{W}''\|_{L^{\infty}}\leq 2,\label{ii-9b}\\
& \|\eta^{\frac{5}{6}}\overline{W}^{(\mu)}\|_{L^{\infty}}\lesssim_{\mu}1\quad\text{for $\mu\geq 3$}.\label{ii-9c}
 \end{align}
 \end{subequations}

 \subsection{Evolution for the modulation variables}\label{ii-b}
With the expectation $\lim\limits_{s\nearrow +\infty}W_{0}(y, s)=\overline{W}(y)$
and by the properties of $\overline{W}(y)$, we pose
\begin{equation}\label{ii-10}
W_0(0, s)=0,\quad \partial_y W_0(0, s)=-1,\quad\partial_y^2 W_0(0, s)=0,\quad \partial_y^3 W_0(0, s)=6.
\end{equation}

Next, we derive the equations of the modulation variables in \eqref{ii-1}. For any nonnegative integer $\mu$, acting $\partial^{\mu}=\partial_y^{\mu}$ on both sides of \eqref{ii-4} and \eqref{ii-5} yields
\begin{subequations}\label{ii-11}\begin{align}
&(\partial_s+\frac{3}{2}\mu)\partial^{\mu}W+(\frac{3}{2}y-e^{\frac{s}{2}}\beta_{\tau}\dot{\xi}(t))\partial_y\partial^{\mu}W
+e^{\frac{s}{2}}\beta_{\tau}A(w)\partial_y\partial^{\mu}W=F_{\mu},\label{ii-11a}\\[2mm]
&(\partial_s+\frac{3\mu-1}{2})\partial^{\mu}W_0+\left(\frac{3}{2}y+e^{\frac{s}{2}}\beta_{\tau}(\mu_{n}(w)
-\dot{\xi}(t))\right)\partial_y\partial^{\mu}W_0=F_{\mu}^0,\label{ii-11b}
\end{align}
\end{subequations}
where
\begin{equation*}\begin{cases}
F_{\mu}=-e^{\frac{s}{2}}\beta_{\tau}\sum\limits_{1\leq\beta\leq\mu}C_{\mu}^{\beta}\partial^{\beta}A(w)\partial_y\partial^{\mu-\beta}W,\\[2mm]
F_{\mu}^0=-e^{\frac{s}{2}}\beta_{\tau}\sum\limits_{1\leq\beta\leq\mu}C_{\mu}^{\beta}\partial^{\beta}\mu_{n}(w)\partial_y\partial^{\mu-\beta}W_0
+e^{s}\beta_{\tau}\sum\limits_{j\neq n}\partial^{\mu}\left(a_{nj}(w)\partial_yW_j\right)
-e^{-\frac{s}{2}}\beta_{\tau}\dot{\kappa}(t)\delta_{\mu}^0.\\
\end{cases}
\end{equation*}

Due to \eqref{ii-10}, it is derived from \eqref{ii-11b} that for $\mu=0, 1, 2$,
the modulation variables satisfy the following ordinary differential system:
\begin{subequations}\label{ii-12}\begin{align}
\dot{\kappa}(t)&=e^{s}\left(\mu_{n}(w^{0})-\dot{\xi}(t)\right)-e^{\frac{3s}{2}}\sum\limits_{j\neq n}a_{nj}(w^{0})(\partial_y W_j)^0,\label{ii-12a}\\[2mm]
\dot{\tau}(t)&=1-\partial_{w_n}\mu_{n}(w^{0})+e^{\frac{s}{2}}\sum\limits_{j\neq n}\partial_{w_j}\mu_{n}(w^{0})(\partial_y W_j)^0\nonumber\\[2mm]
&\quad -e^{s}\sum\limits_{j\neq n}\left(\partial_y(a_{nj}(w)\partial_y W_j)\right)^0,\label{ii-12b}\\[2mm]
\dot{\xi}(t)&=\mu_{n}(w^{0})-\frac{1}{6}(\partial_y^2  \mu_n(w))^{0}+\frac{1}{6}\sum\limits_{j\neq n}e^{\frac{s}{2}}\left(\partial_y^{2}(a_{nj}(w)\partial_y W_j)\right)^0,\label{ii-12c}
\end{align}
\end{subequations}
where the notation $v^{0}$ represents $v(0, s)$ for the function $v(y, s)$.

\subsection{The equation of $\mathcal{W}=W_{0}-\overline{W}$}\label{ii-c}

Set
\begin{equation}\label{ii-14}
\mathcal{W}=W_{0}-\overline{W}.
\end{equation}
It follows from \eqref{ii-5} and \eqref{ii-6} that for any nonnegative integer $\mu$,
\begin{equation}\label{ii-15}
\left(\partial_{s}+\left(\frac{3}{2}y+e^{\frac{s}{2}}\beta_{\tau}(\mu_{n}(w)
-\dot{\xi}(t)\right)\partial_y\right)\partial^{\mu}\mathcal{W}+\mathcal{D}_{\mu}\partial^{\mu}\mathcal{W}=\mathcal{F}_{\mu},
\end{equation}
where
\begin{equation*}
\mathcal{D}_{\mu}=\frac{3\mu-1}{2}+\beta_{\tau}\overline{W}'+e^{\frac{s}{2}}\beta_{\tau}\mu \partial_y \mu_n(w)\\
\end{equation*}
and
\begin{equation*}\begin{aligned}
\mathcal{F}_{\mu}=&-\sum\limits_{1\leq\beta\leq\mu}C_{\mu}^{\beta}\beta_{\tau}\partial_y^{1+\beta}\overline{W}\partial_y^{\mu-\beta}\mathcal{W}
-\sum\limits_{2\leq\beta\leq \mu}C_{\mu}^{\beta}e^{\frac{s}{2}}\beta_{\tau}\partial_y^{\beta}\mu_n(w)\partial_y^{\mu-\beta+1}\mathcal{W}
-e^{-\frac{s}{2}}\beta_{\tau}\dot{\kappa}(t)\delta_{\mu}^0\\
&-\sum\limits_{j\neq n}e^{s}\beta_{\tau}\partial_y^{\mu}\left(a_{nj}(w)\partial_y W_j\right)-\beta_{\tau}e^{\frac{s}{2}}\partial_y^{\mu}\left(\overline{W}' (\mu_{n}(w)-e^{-\frac{s}{2}}W_0-\dot{\xi}(t))\right)\\
&+(1-\beta_{\tau})\partial_y^{\mu}(\overline{W}' \overline{W}).
\end{aligned}
\end{equation*}

In addition, for the purpose to establish the weighted estimates of $\mathcal{W}$, it is derived from \eqref{ii-15}
that for any real number $\nu$,
\begin{equation}\label{ii-16}
\left(\partial_s+\left(\frac{3}{2}y+e^{\frac{s}{2}}\beta_{\tau}(\mu_{n}(w)
-\dot{\xi}(t))\right)\partial_y\right)\left[\eta^{\nu}\partial^{\mu}\mathcal{W}\right]+\mathcal{D}_{\mu, \nu}\left[\eta^{\nu}\partial^{\mu}\mathcal{W}\right]=\eta^{\nu}\mathcal{F}_{\mu},
\end{equation}
where
\begin{equation*}
\mathcal{D}_{\mu, \nu}=\mathcal{D}_{\mu}-2\nu y\eta^{-1}\left(\frac{3}{2}y+e^{\frac{s}{2}}\beta_{\tau}(\mu_{n}(w)-\dot{\xi}(t))\right).
\end{equation*}

\subsection{The decomposition on the derivatives of $W$}\label{ii-d}

To deal with the derivatives of $W$, we adopt the method of eigendecomposition in \cite{PDL1}.
Set
\begin{equation}\label{ii-17}
\partial_{y}^{\mu}W=\sum\limits_{m=1}^{n}W_{\mu}^{m}\gamma_{m}(w),
\end{equation}
where $\left\{\gamma_{m}(w)\right\}_{m=1}^{n}$ have been defined in \eqref{i-71}.

Acting each left eigenvector $\ell_m(w)$ on both sides of \eqref{ii-11a}
and substituting the expansion \eqref{ii-17} into \eqref{ii-11a} yield
\begin{equation}\label{ii-18}
\left(\partial_s+\left(\frac{3}{2}y+e^{\frac{s}{2}}\beta_{\tau}(\mu_{m}(w)-\dot{\xi}(t))\right)\partial_y +\frac{3}{2}\mu\right)W_{\mu}^{m}=\mathbb{F}_{\mu}^m,
\end{equation}
where
\begin{equation*}
\mathbb{F}_{\mu}^m=-\sum\limits_{j=1}^{n}W_{\mu}^{j}\ell_m(w)\cdot\left(\partial_s\gamma_j(w)
+((\frac{3}{2}y-e^{\frac{s}{2}}\beta_{\tau}\dot{\xi}(t))I_n+e^{\frac{s}{2}}\beta_{\tau}A(w))\partial_y\gamma_j(w)\right)
+\ell_m(w)\cdot F_{\mu}.
\end{equation*}

On the other hand, for any real number $\nu$, it is derived from \eqref{ii-18} that
\begin{equation}\label{ii-19}
\left(\partial_s+\left(\frac{3}{2}y+e^{\frac{s}{2}}\beta_{\tau}(\mu_m(w)
-\dot{\xi}(t))\right)\partial_y\right)\left[\eta^{\nu}W_{\mu}^{m}\right]
+\mathbb{D}_{\mu, \nu}^{m}\left[\eta^{\nu}W_{\mu}^{m}\right]=\eta^{\nu}\mathbb{F}_{\mu}^{m},
\end{equation}
where
\begin{equation*}
\mathbb{D}_{\mu, \nu}^{m}=\frac{3\mu}{2}-2\nu y\eta^{-1}\left(\frac{3}{2}y+e^{\frac{s}{2}}\beta_{\tau}(\mu_m(w)-\dot{\xi}(t)\right).
\end{equation*}

\subsection{Initial data and main results under the modulation coordinates}\label{ii-e}

Under the constrains \eqref{i-8}-\eqref{i-11} and the definition \eqref{ii-3},
the initial data of $W(y, s)$ on $s=-\log\varepsilon$ can be determined accordingly.

Indeed, due to \eqref{i-8} and the definitions \eqref{ii-1} and \eqref{ii-3} (also see \eqref{ii-10}),
the initial data of the modulation variables $\tau(t), \xi(t)$ and $\kappa(t)$ on $t=-\varepsilon$ are
\begin{equation}\label{iii-14}
\tau(-\varepsilon)=0,\ \xi(-\varepsilon)=0,\ \kappa(-\varepsilon)=w_n(0, -\varepsilon)=\kappa_0\varepsilon^{\frac{1}{3}}.
\end{equation}

In addition, for the bad component $W_0(y, s)$, it is derived from \eqref{i-8}-\eqref{i-9} and \eqref{ii-2}-\eqref{ii-3}
that on $s=-\log\varepsilon$,
\begin{equation}\label{iii-6}
W_{0}(0, s)=\partial_{y}^{2}W_{0}(0, s)=0,\ \partial_{y}W_{0}(0, s)=\min\limits_{y\in\mathbb{R}}\partial_{y}W_{0}(y, s)=-1,\  \partial_{y}^{3}W_{0}(0, s)=6
\end{equation}
and
\begin{subequations}\label{iii-7}\begin{align}
&|W_{0}(y, -\log\varepsilon)|\leq 2\varepsilon^{-\frac{1}{30}},\label{iii-7a}\\
&\text{$|\mathcal{W}(y, -\log\varepsilon)|\leq \varepsilon\eta^{\frac{1}{6}}(y)$
and $|\partial_{y}\mathcal{W}(y, -\log\varepsilon)|\leq  \varepsilon\eta^{-\frac{1}{3}}(y)$ for $|y|\leq\mathcal{L}$},\label{iii-7b}\\
&|\partial_y^4 \mathcal{W}(y, -\log\varepsilon)|\leq \varepsilon^{\frac{1}{9}}\quad\text{for $|y|\leq 1$},\label{iii-7d}\\
&|\partial_y W_0(y, -\log\varepsilon)|\leq 2\eta^{-\frac{1}{3}}(y)\boldsymbol{1}_{\{\mathcal{L}\leq |y|\leq 2\mathcal{L}\}}+\eta^{-1}(y)\boldsymbol{1}_{\{|y|\geq 2\mathcal{L}\}}\quad\text{for $|y|\geq\mathcal{L}$}.\label{iii-7c}
\end{align}
\end{subequations}

For the good components of $W$, it follows from \eqref{i-10} and \eqref{ii-2}-\eqref{ii-3} that for $j\neq n$
\begin{subequations}\label{iii-8}\begin{align}
&|W_{j}(y, -\log\varepsilon)|\leq \varepsilon,\label{iii-8a}\\
&|\partial_y W_j(y, -\log\varepsilon)|\leq \varepsilon^{\frac{3}{2}-3\nu} \eta^{-\nu}(y)\ (\nu\in [0, \frac{1}{3}]),\label{iii-8b}\\
&|\partial_y^2 W_j(y, -\log\varepsilon)|\leq \varepsilon^{\frac{7}{6}}\eta^{-\frac{1}{3}}(y).\label{iii-8c}
\end{align}
\end{subequations}

Following \eqref{i-11}, the initial energy of $\partial_y^{\mu_0}W$ on $s=-\log\varepsilon$ satisfies
\begin{equation}\label{iii-9}
\sum\limits_{j=1}^{n-1}\|\partial_{y}^{\mu_0}W_{j}(\cdot, -\log\varepsilon)\|_{L^{2}(\mathbb{R})}
+\varepsilon^{\frac{3}{2}}\|\partial_{y}^{\mu_0}W_{0}(\cdot, -\log\varepsilon)\|_{L^{2}(\mathbb{R})}\lesssim \varepsilon^{\frac{3}{2}}.
\end{equation}

Under the preparations above, the new version of Theorem \ref{thmi-1} under the modulated coordinates can be stated as:
\begin{thm}\label{thmii-1} {\it Under the conditions in \eqref{i-7} and the notations in \eqref{ii-1}-\eqref{ii-3}, there exists a positive constant $\varepsilon_0$ such that when $0<\varepsilon<\varepsilon_0$, the system \eqref{ii-4}-\eqref{ii-5} and \eqref{ii-12} with the initial data satisfying \eqref{iii-14}-\eqref{iii-9} has a global-in-time solution $W$ and $\tau(t), \xi(t), \kappa(t)$, which satisfy
\begin{enumerate}[$(1)$]
\item $\lim\limits_{s\to +\infty}\xi(t)=x^*=O(\varepsilon^2), \lim\limits_{s\to +\infty}\tau(t)=T^*=O(\varepsilon^{\frac{4}{3}})$.

\item $|\dot{\tau}(t)|\lesssim \varepsilon^{\frac{1}{3}},\ |\dot{\xi}(t)|\lesssim \varepsilon,\ |\dot{\kappa}(t)|\lesssim 1$, and $|\tau(t)|\lesssim\varepsilon^{\frac{4}{3}},\ |\xi(t)|\lesssim \varepsilon^2,\ |\kappa(t)-\kappa_0\varepsilon^{\frac{1}{3}}|\lesssim\varepsilon$.

\item  $|W_0(y, s)|\lesssim\varepsilon^{\frac{1}{3}}e^{\frac{s}{2}}, |W_i(y, s)|\lesssim\varepsilon\ (1\leq i\leq n-1)$.

\item With respect to $W_0$,
\begin{equation*}\begin{cases}
\text{$|W_0(y, s)-\overline{W}(y)|\leq\varepsilon^{\frac{1}{11}}\eta^{\frac{1}{6}}(y)$
and $|\partial_y (W_0(y, s)-\overline{W}(y))|\leq \varepsilon^{\frac{1}{12}}\eta^{-\frac{1}{3}}(y)$ for $|y|\leq\mathcal{L}\varepsilon^{\frac{1}{4}}e^{\frac{s}{4}}$},\\
|\partial_y W_0(y, s)|\leq\frac{7}{6}\eta^{-\frac{1}{3}}(y)\quad\text{for $|y|\geq\mathcal{L}\varepsilon^{\frac{1}{4}}e^{\frac{s}{4}}$}.
\end{cases}
\end{equation*}

\item For $1\leq j\leq n-1$,
\begin{equation*}
|\partial_y W_j(y, s)|\lesssim e^{(3\nu-\frac{3}{2})s}\eta^{-\nu}(y)\ (\nu\in [0, \frac{1}{3}]), |\partial_y^2 W_j(y, s)|\lesssim e^{(\nu^+-\frac{7}{6})s}\eta^{-\frac{1}{3}}(y)\ (\nu^+>0).
\end{equation*}

\item For $\mu_0$ given in \eqref{iv-50},
\begin{equation*}
\|\partial_y^{\mu_0}W_j(\cdot, s)\|_{L^2(\mathbb{R})}\lesssim e^{-\frac{3}{2}s}\ (1\leq j\leq n-1),\ \|\partial_y^{\mu_0}W_n(\cdot, s)\|_{L^2(\mathbb{R})}\lesssim e^{-\frac{s}{2}}.
\end{equation*}
\end{enumerate}}
\end{thm}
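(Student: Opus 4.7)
The plan is to establish Theorem \ref{thmii-1} by a bootstrap/continuation argument in the self-similar time variable $s\in[-\log\varepsilon,+\infty)$, following the strategy laid out in Section \ref{i} and inspired by \cite{TSV2,TSV3}. Concretely, I would formulate an ansatz consisting of all six pointwise/$L^2$ conclusions (1)--(6) of the theorem but with the implicit constants enlarged by, say, a factor of two, and define $s_\star$ to be the supremum of times on $[-\log\varepsilon,+\infty)$ on which the ansatz together with sufficient smoothness of the modulation variables $(\tau,\xi,\kappa)$ holds. The initial consistency at $s=-\log\varepsilon$ is furnished directly by \eqref{iii-14}--\eqref{iii-9}. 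The task then reduces to improving the constants in the ansatz strictly on $[-\log\varepsilon,s_\star)$, which by a standard openness/closedness argument forces $s_\star=+\infty$; the conclusions (1)--(2) on the limits of $\tau(t),\xi(t)$ are then read off by integrating the modulation ODEs \eqref{ii-12} from $t=-\varepsilon$ to $T^\star$.

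The heart of the proof is the $L^\infty$ control of the bad unknown $W_0$ through the perturbation equation \eqref{ii-15}--\eqref{ii-16} for $\mathcal{W}=W_0-\overline{W}$, carried out in the two regimes dictated by the weight $\mathcal L\varepsilon^{1/4}e^{s/4}$. In the interior $\{|y|\le\mathcal L\varepsilon^{1/4}e^{s/4}\}$ I would propagate bounds along the characteristic flow of $\frac{3}{2}y+e^{s/2}\beta_\tau(\mu_n(w)-\dot\xi)$: the constraints \eqref{ii-10} together with the strict negativity and decay of $\overline{W}'$ force the damping $\mathcal D_{\mu,\nu}$ to be strictly positive for $\mu=0,1$, and the source $\mathcal F_\mu$ is controlled by the good components (whose $\partial_yW_j$ decay in time) and by the modulation closures evaluated at $y=0$. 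In the exterior $\{|y|\ge\mathcal L\varepsilon^{1/4}e^{s/4}\}$, the transport coefficient is dominated by $\tfrac32 y$, so characteristics escape outward with rate $e^{3s/2}$; using the weight $\eta^\nu$ to convert the linear drift into damping, one propagates $|\partial_y W_0|\le\tfrac76\eta^{-1/3}$ from the initial bound \eqref{iii-7c}. The delicate matching of the two regimes is where the auxiliary threshold $\mathcal L=\varepsilon^{-1/10}$ pays off.

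For the good components I would work from the diagonalized system \eqref{ii-18}--\eqref{ii-19} along the $m$-th characteristic with speed $\frac{3}{2}y+e^{s/2}\beta_\tau(\mu_m(w)-\dot\xi)$ for $m\neq n$. Because $\mu_m(0)-\mu_n(0)\neq 0$ by strict hyperbolicity \eqref{i-7a}, for small $w$ this speed carries the scale $e^{s/2}$, so any bounded initial $y$-window is swept out to $|y|\sim e^{s/2}$ after time $O(1)$, and along these trajectories one obtains the exponential controls announced in Remark \ref{remi-3} (Lemmas \ref{lem7-1}--\ref{lem7-2}). Combined with the damping $\frac{3\mu}{2}$ and the weight $\eta^{-\nu}$, this closes the pointwise bounds $|\partial_y W_j|\lesssim e^{(3\nu-3/2)s}\eta^{-\nu}$ and $|\partial_y^2 W_j|\lesssim e^{(\nu^+-7/6)s}\eta^{-1/3}$. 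The closure of the modulation ODEs \eqref{ii-12} is then routine: evaluating at $y=0$ via \eqref{ii-10} expresses $\dot\kappa,\dot\tau,\dot\xi$ as algebraic functions of quantities already controlled, and a Gr\"onwall integration from $t=-\varepsilon$ gives the claimed sizes $|\dot\tau|\lesssim\varepsilon^{1/3}$, $|\dot\xi|\lesssim\varepsilon$, $|\dot\kappa|\lesssim 1$.

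The remaining $L^2$ estimate on $\partial_y^{\mu_0}W$ is obtained by applying $\partial_y^{\mu_0}$ to \eqref{ii-11a}, pairing against $\partial_y^{\mu_0}W$ with a symmetrizer for $A(w)$, and integrating by parts. The scaling damping $\frac{3\mu_0}{2}$ produces the main positive contribution; the potentially dangerous term $e^{s/2}\beta_\tau A(w)\partial_y^{\mu_0+1}W$ loses only a single factor $e^{s/2}$ after integration by parts (this is the structural point stressed in Section \ref{i}, coming from strict hyperbolicity), while the commutators between $\partial_y^{\mu_0}$ and $A(w)$ are handled by the $L^\infty$ bounds from the previous step together with the interpolation inequality announced for Appendix \ref{A}, provided $\mu_0\ge 6$. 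The main obstacle, as I see it, is the exterior $L^\infty$ analysis of $W_0$: it simultaneously requires the temporal/spatial decay of the good components, a sign-definite positivity on the transport characteristic across a moving threshold, and a careful matching with the interior estimate near $|y|\sim\mathcal L\varepsilon^{1/4}e^{s/4}$; the interior estimate, by contrast, is close to a pure self-similar Burgers analysis around $\overline{W}$, and the energy estimate, while lengthy, is structurally forced once the $L^\infty$ layer is in place.
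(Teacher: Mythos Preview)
Your overall architecture matches the paper's: a bootstrap on $[-\log\varepsilon,\infty)$ closing the pointwise $L^\infty$ bounds on $W_0,\mathcal W,W_j$ via characteristics, the modulation ODEs \eqref{ii-12} at $y=0$, and a top-order $L^2$ estimate. Two points, however, are not right as stated.

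\textbf{The interior $\mathcal W$ estimate.} You claim that ``the strict negativity and decay of $\overline W'$ force the damping $\mathcal D_{\mu,\nu}$ to be strictly positive for $\mu=0,1$.'' This is false. From \eqref{ii-15}, $\mathcal D_0=-\tfrac12+\beta_\tau\overline W'$ and (near $y=0$) $\mathcal D_1\approx 1+\beta_\tau\overline W'(0)+\beta_\tau\partial_yW_0(0,s)\approx 1-1-1=-1$; the weights in \eqref{ii-16} do not rescue this. The paper does \emph{not} obtain positive damping at $\mu=0,1$. Instead it first works in the tiny ball $|y|\le\ell=M^{-4}$ at order $\mu=4$, where $\mathcal D_4=\tfrac{11}{2}+5\beta_\tau\overline W'+\cdots\ge\tfrac13$ is genuinely positive (this is exactly why the initial assumption \eqref{i-9e}/\eqref{iii-7d} on $\partial_y^4\hat w$ and the auxiliary bootstrap \eqref{iv-4} are there). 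The resulting bound on $\partial_y^4\mathcal W$ is integrated back to $\mu=0,1,2,3$ using $\partial_y^\mu\mathcal W(0,s)=0$ from \eqref{ii-10}. Only then, for $\ell\le|y|\le\mathcal L\varepsilon^{1/4}e^{s/4}$, does the paper return to $\mu=0,1$; there it does \emph{not} use a sign on $\mathcal D_{\mu,\nu}$ but rather shows $\int_{\zeta_0}^\infty|\mathcal D_{\mu,\nu}(y(\alpha),\alpha)|\,d\alpha\le 200\ln(1/\ell)$ along characteristics escaping from $|y_0|\ge\ell$ (Proposition~\ref{lem5-1}), and absorbs the resulting multiplicative loss $\ell^{-200}$ into the $\varepsilon$-smallness of the source. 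Without this two-scale seed-at-$\mu=4$ mechanism the interior estimate does not close; your proposal as written has a real gap here.

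\textbf{The top-order $L^2$ estimate.} Pairing with a symmetrizer for $A(w)$ is not what the paper does, and by itself it will not separate the rates $e^{-3s/2}$ (good components) from $e^{-s/2}$ (bad component): the coupling through $A(w)$ would feed the slow $W_n$ decay into $W_j$. The paper instead uses the eigendecomposition \eqref{ii-17}--\eqref{ii-18} and, for each $m\neq n$, introduces a monotone weight $e^{q_m(y)}$ (see \eqref{v-32}--\eqref{v-33}) whose derivative, multiplied by the transport speed $\sim\mu_m(0)e^{s/2}$, manufactures the extra damping $Q_m\ge K^*\chi_{\{|y|\le K\}}$ needed to absorb the near-origin coupling (this is precisely the ``$O(e^{s/2})$ scale'' point \eqref{v-35}). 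If you want to go the symmetrizer route you will need an analogous device to decouple the rates.
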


\begin{rem}\label{remii-2} In Theorem \ref{thmii-1}, the spatial decay estimates in $(4)$ and $(5)$
come from the influences of the initial data $W(y, -\log\varepsilon)$ without compact support.
\end{rem}

\section{Bootstrap assumptions}\label{iv}

Since the local existence of \eqref{i-6} was known already (one can see \cite{MA} for instance),
we utilize the continuous induction to establish the global-in-time estimates in Theorem \ref{thmii-1}. According to the initial data in \eqref{iii-14}-\eqref{iii-9}, we first make the following induction assumptions. In what follows, $M>0$ is denoted as a
suitably large constant, which is independent of $\varepsilon$.

For the modulation variables in \eqref{ii-1}, suppose that
\begin{subequations}\label{iv-1}\begin{align}
&|\kappa(t)-\kappa_0\varepsilon^{\frac{1}{3}}|\leq M\varepsilon,\quad|\tau(t)|\leq M\varepsilon^{\frac{4}{3}},\quad\quad |\xi(t)|\leq M\varepsilon^2,\label{iv-1a}\\
&|\dot{\kappa}(t)|\leq M,\quad\quad |\dot{\tau}(t)|\leq M\varepsilon^{\frac{1}{3}},\quad\quad |\dot{\xi}(t)|\leq M\varepsilon.\label{iv-1b}
\end{align}
\end{subequations}

For the bad unknown $W_0$ and the related $\mathcal{W}$ in \eqref{ii-14}, the bootstrap assumptions are
\begin{subequations}\label{iv-2}\begin{align}
&\|W_0(\cdot, s)\|_{L^{\infty}}\leq M\varepsilon^{\frac{1}{3}}e^{\frac{s}{2}},\ |\partial_y^{\mu}W_0(y, s)|\leq M\ (1\leq\mu\leq 4),\label{iv-2a}\\
&|\partial_y W_0(y, s)|\leq \frac{7}{6}\eta^{-\frac{1}{3}}(y)\quad\text{for $|y|\geq \mathcal{L}\varepsilon^{\frac{1}{4}}e^{\frac{s}{4}}$},\label{iv-2b}\\
&\text{$|\mathcal{W}(y, s)|\leq \varepsilon^{\frac{1}{11}} \eta^{\frac{1}{6}}(y)$
and $|\partial_y\mathcal{W}(y, s)|\leq \varepsilon^{\frac{1}{12}}\eta^{-\frac{1}{3}}(y)$ for $|y|\leq\mathcal{L}\varepsilon^{\frac{1}{4}}e^{\frac{s}{4}}$}.\label{iv-2c}
\end{align}
\end{subequations}

For the good unknowns $W_i (1\leq i\leq n-1)$, it is assumed that
\begin{subequations}\label{iv-3}\begin{align}
&|W_i(y, s)|\leq M\varepsilon,\ |\partial_y^{\mu} W_i(y, s)|\leq Me^{-\frac{3}{2}s}\ (1\leq\mu\leq 4),\label{iv-3a}\\
&|\partial_y W_i(y, s)|\leq Me^{(3\nu-\frac{3}{2})s}\eta^{-\nu}(y)\ (\nu\in [0, \frac{1}{3}]),\label{iv-3b}\\
&|\partial_y^2 W_i(y, s)|\leq Me^{(\nu^+-\frac{7}{6})s}\eta^{-\frac{1}{3}}(y)\ (\nu^+>0).\label{iv-3c}
\end{align}
\end{subequations}

In addition, we  make the following auxiliary assumptions with $\ell=\frac{1}{M^4}$,
\begin{equation}\label{iv-4}
|\partial_y^{\mu}\mathcal{W}(y, s)|\leq \varepsilon^{\frac{1}{10}}\ell^{4-\mu},\ 0\leq \mu\leq 4,\ |y|\leq \ell.
\end{equation}

With respect to the energies of the higher order derivatives of $W$, by
fixing $\mu_0$ to be the minimum positive integer such that
\begin{equation}\label{iv-50}
\mu_0\geq 6,\ 3\mu_0-e^{\frac{s}{2}}\beta_{\tau}\partial_y\mu_m(w)\geq \frac{13}{2}\ (1\leq m\leq n-1),
\end{equation}
we assume
\begin{equation}\label{iv-6}
\sum\limits_{m=1}^{n-1}\|\partial_y^{\mu_0}W_m(\cdot, s)\|_{L^2(\mathbb{R})}\leq Me^{-\frac{3}{2}s},\quad \|\partial_y^{\mu_0}W_n(\cdot, s)\|_{L^2(\mathbb{R})}\leq Me^{-\frac{s}{2}}.
\end{equation}

\section{Bootstrap estimates on the bad component of $W$}\label{vi}

In the section, we close the bootstrap arguments on $W_0$  and $\mathcal{W}$.

\subsection{The analysis on the characteristics of \eqref{ii-5}}

We now study some properties of the characteristics of \eqref{ii-5}. For any point $(y_0, \zeta_0)$ with $\zeta_0\geq -\log\varepsilon$,
the characteristics  $y(\zeta)=y(\zeta; y_0, \zeta_0)$ of \eqref{ii-5}
starting from $(y_0, \zeta_0)$ is defined as
\begin{equation}\label{vi1-1}\begin{cases}
\dot{y}(\zeta)=\frac{3}{2}y(\zeta)+e^{\frac{s}{2}}\beta_{\tau}(\mu_n(w)-\dot{\xi}(t(\zeta)))(y(\zeta), \zeta),\ \zeta\geq \zeta_0,\\
y(\zeta_0)=y_0.
\end{cases}
\end{equation}

\begin{prop}\label{lem5-1} {\it Under the assumptions \eqref{i-7c} and \eqref{iv-1}-\eqref{iv-3},
when $|y_0|\geq \ell$, one has
\begin{equation}\label{vi1-2}
|y(\zeta)|\geq |y_0|e^{\frac{\zeta-\zeta_0}{2}}\ (\zeta\geq\zeta_0).
\end{equation}
In addition, if $(y(\zeta), \zeta)$  goes through some point $(y, s)$ with $|y|\leq\ell$ and $s\geq\zeta_0$, then
\begin{equation}\label{vi1-21}
|y(\zeta)|\leq \ell\ (\zeta_0\leq \zeta\leq s).
\end{equation}}
\end{prop}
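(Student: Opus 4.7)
The plan is to reduce the characteristic ODE \eqref{vi1-1} to the differential inequality $\dot y(\zeta)\,\mathrm{sgn}(y(\zeta))\geq\tfrac12|y(\zeta)|$, valid whenever $|y(\zeta)|\geq\ell$, and then obtain \eqref{vi1-2} by integration together with a short continuity argument. First I would use the Taylor expansion \eqref{i-7c} (with the normalizations $\mu_n(0)=0$, $\partial_{w_n}\mu_n(0)=1$) and $w_n=e^{-\zeta/2}W_0+\kappa(t)$ to split the transport speed as
\[
e^{\zeta/2}(\mu_n(w)-\dot\xi)=e^{\zeta/2}(\mu_n(w)-\mu_n(w^0))+e^{\zeta/2}(\mu_n(w^0)-\dot\xi).
\]
The modulation equation \eqref{ii-12c}, combined with the pointwise conditions \eqref{ii-10} and the bootstrap \eqref{iv-1}--\eqref{iv-3}, gives $|e^{\zeta/2}(\mu_n(w^0)-\dot\xi)|\lesssim e^{-\zeta/2}$. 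For the first piece, expanding $\partial_y\mu_n(w)=\partial_{w_n}\mu_n(w)\partial_y W_n+\sum_{j\neq n}\partial_{w_j}\mu_n(w)\partial_y W_j$, using $\partial_y W_n=e^{-\zeta/2}\partial_y W_0$, and integrating in $y'$ from $0$ to $y$, I get
\[
e^{\zeta/2}(\mu_n(w)-\mu_n(w^0))=(1+O(\varepsilon^{1/3}))\,W_0(y,\zeta)+O(Me^{-\zeta}|y|),
\]
because $W_0(0,\zeta)=0$, $\partial_{w_n}\mu_n(w)=1+O(|w|)=1+O(\varepsilon^{1/3})$, and $|\partial_y W_j|\leq Me^{-3\zeta/2}$ for $j\neq n$ by \eqref{iv-3a}.

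Substituting into \eqref{vi1-1} and using $\beta_\tau=1+O(\varepsilon^{1/3})$, the ODE becomes
\[
\dot y(\zeta)=\tfrac12\,y(\zeta)+\bigl[y(\zeta)+W_0(y(\zeta),\zeta)\bigr]+R(y(\zeta),\zeta),
\]
with $|R|\leq C\bigl(\varepsilon^{1/3}\eta^{1/6}(y)+Me^{-\zeta}|y|+e^{-\zeta/2}\bigr)$. The core of the argument is the one-sided bound $y+W_0(y,\zeta)\geq c_\ell>0$ for $y\geq\ell$ (and symmetrically for $y\leq-\ell$), which I would prove by splitting into two regions. In the interior $\ell\leq|y|\leq\mathcal L\varepsilon^{1/4}e^{\zeta/4}$, decompose $W_0=\overline W+\mathcal W$. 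The explicit formula \eqref{ii-8} shows that $z=\overline W(y)$ satisfies $y=-z-z^3$, which yields the algebraic identity $y+\overline W(y)=|\overline W(y)|^3$ for $y>0$. Monotonicity of $|\overline W|$ then gives $y+\overline W(y)\geq|\overline W(\ell)|^3\geq c\,\ell^3=c\,M^{-12}$, and the correction $|\mathcal W|\leq\varepsilon^{1/11}\eta^{1/6}(y)$ from \eqref{iv-2c} is absorbed once $\varepsilon_0$ is small relative to $M$. In the exterior $|y|\geq\mathcal L\varepsilon^{1/4}e^{\zeta/4}$, integrating the bootstrap $|\partial_y W_0|\leq\tfrac76\eta^{-1/3}$ in \eqref{iv-2b} from the boundary gives $|W_0(y,\zeta)|\leq C|y|^{1/3}$, hence $y+W_0\geq y/2$ for $|y|$ large enough.

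Combining the two regions, $\dot y\geq y/2$ whenever $y\geq\ell$, so a continuity argument (any putative first time $\zeta_*$ at which $y(\zeta_*)=\ell$ is excluded by $\dot y(\zeta_*)\geq\ell/2>0$) shows that $y(\zeta)\geq\ell$ for all $\zeta\geq\zeta_0$, and integration of $\dot y\geq y/2$ yields \eqref{vi1-2}; the case $y_0\leq-\ell$ is symmetric. The second assertion \eqref{vi1-21} then follows from the first by contradiction: if $|y(\zeta_1)|>\ell$ for some $\zeta_1\in[\zeta_0,s]$, applying \eqref{vi1-2} on $[\zeta_1,s]$ would force $|y(s)|\geq|y(\zeta_1)|e^{(s-\zeta_1)/2}>\ell$, contradicting the hypothesis $|y(s)|\leq\ell$.

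The main obstacle is the interior lower bound near $|y|=\ell$: the positive reserve $|\overline W(y)|^3$ is only of order $\ell^3=M^{-12}$, so every perturbation---the $\mathcal W$ contribution, the $(\beta_\tau-1)W_0$ factor, the cross terms involving $W_j$, and the residual $O(e^{-\zeta/2})$ from the modulation---must be dominated by powers of $\varepsilon$ that beat $M^{-12}$. This fixes the order in which the constants are chosen: $M$ is taken large first, and only afterwards $\varepsilon_0$ is taken small enough so that $\varepsilon^{1/11}\ll M^{-13}$ and $\varepsilon^{1/3}\ll 1$, which secures the required margin.
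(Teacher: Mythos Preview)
Your argument is correct and takes a genuinely different route from the paper's. The paper does not exploit the sign of $y+W_0$ at all: it simply proves the two--sided bound $|e^{\zeta/2}\beta_\tau(\mu_n(w)-\dot\xi)|\le (\tfrac{7}{6}+\varepsilon^{1/20})|y|+M^2e^{-\zeta/2}$ (Lemma~\ref{lem5-0}, obtained by integrating $|\partial_yW_0|\le\tfrac{7}{6}\eta^{-1/3}$ from $y=0$), plugs this into $\tfrac{d}{d\zeta}y^2=3y^2+2y\,e^{\zeta/2}\beta_\tau(\mu_n-\dot\xi)$, and gets a differential inequality $\tfrac{d}{d\zeta}y^2\ge cy^2-Ce^{-\zeta}$ with $c=\tfrac{7}{12}$. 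A first pass shows $|y(\zeta)|\ge\ell/2$ always, after which the small term is absorbed. This is quicker because it needs no case analysis and no information about $\overline W$ beyond $|\overline W'|\le 1$; on the other hand it yields only $\tfrac{d}{d\zeta}y^2\ge\tfrac12 y^2$, i.e.\ growth $e^{(\zeta-\zeta_0)/4}$ for $|y|$, which is weaker than the exponent $\tfrac12$ claimed in \eqref{vi1-2}. Your use of the algebraic identity $y+\overline W(y)=|\overline W(y)|^3$ is what actually recovers that sharper rate.

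One point in your write--up deserves tightening. In the interior region you bound $y+\overline W(y)\ge c\ell^3$ uniformly and then say the correction $|\mathcal W|\le\varepsilon^{1/11}\eta^{1/6}(y)$ is ``absorbed.'' For $|y|$ of order $1$ or larger this correction exceeds $c\ell^3$; what saves you is that $|\overline W(y)|^3$ itself grows (it behaves like $y$ for $y\gtrsim 1$), so the right comparison is $\varepsilon^{1/11}\eta^{1/6}(y)\le\tfrac12|\overline W(y)|^3$, whose worst case is at $y=\ell$. The same remark applies to the $\varepsilon^{1/3}\eta^{1/6}(y)$ piece of $R$. Once this is stated cleanly, your interior and exterior cases match up and the argument goes through.
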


To prove Proposition \ref{lem5-1} and for later uses, we first establish the following results
on $\mu_n(w)-\dot{\xi}(t)$.
\begin{lem}\label{lem5-0} {\it One has
\begin{equation}\label{vi1-6}
|\mu_n(w)-\dot{\xi}(t)|\leq e^{-\frac{s}{2}}(\frac{7}{6}+\varepsilon^{\frac{1}{20}})|y|+M^2e^{-s}
\end{equation}
and
\begin{equation}\label{vi1-61}
|\mu_n(w)-\dot{\xi}(t)-e^{-\frac{s}{2}}W_0|\leq \varepsilon^{\frac{1}{8}}e^{-\frac{s}{2}}|y|^{\frac{1}{2}}+M^2e^{-s}.
\end{equation}
}
\end{lem}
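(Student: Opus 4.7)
The natural starting point is the decomposition
\begin{equation*}
\mu_n(w)-\dot\xi(t) = \bigl[\mu_n(w)-\mu_n(w^0)\bigr] + \bigl[\mu_n(w^0)-\dot\xi(t)\bigr],
\end{equation*}
where $w^0=w(0,s)$. I would first dispose of the pointwise offset $\mu_n(w^0)-\dot\xi(t)$ by substituting the evolution law \eqref{ii-12c} and expanding $(\partial_y^2\mu_n(w))^0$ and $(\partial_y^2(a_{nj}(w)\partial_yW_j))^0$ via the chain rule. Two cancellations are decisive: $\partial_y^2W_0(0,s)=0$ from \eqref{ii-10} (which kills the only $O(1)$ contribution to $(\partial_y^2\mu_n(w))^0$ through $\partial_{w_n}\mu_n\cdot(\partial_y^2w_n)(0)$), and $a_{nj}(0)=0$ for $j<n$ (from $A(0)$ being diagonal in \eqref{i-7d}), so that $a_{nj}(w^0)=O(|w^0|)=O(\varepsilon^{1/3})$. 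Together with $\partial_yW_0(0,s)=-1$, $|w^0|\lesssim\varepsilon^{1/3}$ (from \eqref{iv-1a} and \eqref{iv-3a}), and the derivative bootstraps \eqref{iv-2a}--\eqref{iv-3c}, every surviving term carries a factor of $e^{-s}$ after the $e^{s/2}$ weight in \eqref{ii-12c} is absorbed, producing $|\mu_n(w^0)-\dot\xi(t)|\lesssim M^2 e^{-s}$.

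Next, Taylor-expand $\mu_n(w)-\mu_n(w^0)$ to first order around $w^0$ with quadratic remainder. Since $W_0(0,s)=0$, one has $w_n-w_n^0=e^{-s/2}W_0$, and $\partial_{w_n}\mu_n(w^0)=1+O(\varepsilon^{1/3})$. The crucial pointwise control is the uniform estimate $|\partial_yW_0(y,s)|\leq\tfrac{7}{6}\eta^{-1/3}(y)$, obtained in the interior $|y|\leq\mathcal{L}\varepsilon^{1/4}e^{s/4}$ by combining $|\overline W'|\leq\eta^{-1/3}$ from \eqref{ii-9b} with $|\partial_y\mathcal{W}|\leq\varepsilon^{1/12}\eta^{-1/3}$ from \eqref{iv-2c}, and in the exterior directly from \eqref{iv-2b}. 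Integrating from $y=0$ yields $|W_0(y,s)|\leq\tfrac{7}{6}|y|$. Combined with $|W_k-W_k(0,s)|\leq Me^{-3s/2}|y|$ from \eqref{iv-3b} at $\nu=0$ and the quadratic remainder $O(|w-w^0|^2)\leq O(e^{-s}|y|^2)$, one obtains
\begin{equation*}
|\mu_n(w)-\mu_n(w^0)| \leq \bigl(\tfrac{7}{6}+C\varepsilon^{1/3}+CMe^{-s}\bigr)e^{-s/2}|y| + O(e^{-s}|y|^2),
\end{equation*}
and \eqref{vi1-6} follows on the bootstrap range $|y|\leq\mathcal{L}\varepsilon^{1/4}e^{s/4}$ after taking $\varepsilon$ small enough that $C\varepsilon^{1/3}+CMe^{-s}\leq\varepsilon^{1/20}$.

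For the sharper estimate \eqref{vi1-61}, I would use $\mu_n(w)=w_n+\sum_{k<n}c_kw_k+Q(w)$ with $Q(w)=O(|w|^2)$ (justified by $\mu_n(0)=0$ and $\partial_{w_n}\mu_n(0)=1$), substitute $w_n=e^{-s/2}W_0+\kappa(t)$ to isolate $e^{-s/2}W_0$ explicitly, and treat $\dot\xi(t)$ analogously via the expansion $\dot\xi(t)=\kappa(t)+\sum_{k<n}c_kW_k(0,s)+Q(w^0)+O(M^2e^{-s})$ coming from the first paragraph. The algebra collapses to
\begin{equation*}
\mu_n(w)-\dot\xi(t)-e^{-s/2}W_0 = \sum_{k<n}c_k\bigl(W_k-W_k(0,s)\bigr) + \bigl(Q(w)-Q(w^0)\bigr) + O(M^2e^{-s}).
\end{equation*}
The linear sum is $\lesssim Me^{-3s/2}|y|$, absorbed by $M^2e^{-s}$ on the bootstrap range via $e^{-s/2}|y|\lesssim M$. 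For the quadratic difference $|Q(w)-Q(w^0)|\lesssim\max(|w|,|w^0|)\cdot|w-w^0|\lesssim\varepsilon^{1/3}e^{-s/2}|W_0|+e^{-s}|W_0|^2$, the key refinement is to bound $|W_0|$ not by the crude $\tfrac{7}{6}|y|$ but by the structural decay $|W_0|\lesssim\eta^{1/6}(y)$ (interior: $|\overline W|\leq\eta^{1/6}$ from \eqref{ii-9b} and $|\mathcal{W}|\leq\varepsilon^{1/11}\eta^{1/6}$ from \eqref{iv-2c}; exterior: integrate \eqref{iv-2b} from the transition $y_0=\mathcal{L}\varepsilon^{1/4}e^{s/4}$), which yields $|W_0|\lesssim|y|^{1/3}$ for $|y|\geq 1$ together with $|W_0|\leq\tfrac{7}{6}|y|$ for $|y|\leq 1$.

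The main obstacle is the sharp $|y|^{1/2}$ scaling in \eqref{vi1-61}, which forces the refined pointwise decay $|W_0|\lesssim|y|^{1/3}$ rather than $|W_0|\lesssim|y|$ at large $|y|$. The region-by-region bookkeeping is where the proof lives: on $|y|\geq 1$ one checks $\varepsilon^{1/3}|y|^{1/3}\leq\varepsilon^{1/8}|y|^{1/2}$ (i.e.\ $\varepsilon^{5/24}\leq|y|^{1/6}$, trivial for $\varepsilon$ small) and $e^{-s}|y|^{2/3}\leq\varepsilon^{1/8}e^{-s/2}|y|^{1/2}$ (i.e.\ $|y|\leq\varepsilon^{3/4}e^{3s}$, valid on $|y|\leq\mathcal{L}\varepsilon^{1/4}e^{s/4}$ with $s\geq -\log\varepsilon$); on $|y|\leq 1$ the inequality $|y|\leq|y|^{1/2}$ makes everything immediate, with the auxiliary bound \eqref{iv-4} providing the very-small-$|y|$ sharpening on $\mathcal{W}$ whenever needed.
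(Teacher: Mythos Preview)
Your decomposition $\mu_n(w)-\dot\xi=(\mu_n(w)-\mu_n(w^0))+(\mu_n(w^0)-\dot\xi)$ and the handling of the second piece are exactly what the paper does. But there is a genuine gap in your treatment of the first piece for \eqref{vi1-61}: the lemma is stated and used for \emph{all} $y$ (it feeds into Proposition~\ref{lem5-1} and into \eqref{vi6-1}--\eqref{vi6-2} along characteristics that run out to arbitrarily large $|y|$), yet you repeatedly invoke the restriction ``on the bootstrap range $|y|\le\mathcal L\varepsilon^{1/4}e^{s/4}$''. In particular, your linear term $\sum_{k<n}c_k(W_k-W_k(0,s))$, bounded via $\nu=0$ in \eqref{iv-3b} as $\lesssim Me^{-3s/2}|y|$, cannot be absorbed into $M^2e^{-s}$ once $e^{-s/2}|y|$ is large; the same range restriction reappears in your quadratic bookkeeping for $e^{-s}|W_0|^2$.

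The paper sidesteps all of this with two moves. First, it never Taylor-expands: it writes
\[
\mu_n(w)-\mu_n(w^0)-e^{-s/2}W_0
=\sum_{i=1}^n\Bigl[\int_0^1\partial_{w_i}\mu_n(\beta w+(1-\beta)w^0)\,d\beta-\delta_n^i\Bigr]\int_0^y\partial_yW_i(z,s)\,dz,
\]
so the ``linear'' and ``quadratic'' contributions are merged and the smallness $|w|,|w^0|\lesssim\varepsilon^{1/3}$ enters only through the $i=n$ coefficient. Second --- and this is the point you are missing --- for $i<n$ it takes $\nu=\tfrac14$ in \eqref{iv-3b}, giving $|\partial_yW_i|\le Me^{-3s/4}\eta^{-1/4}$ and hence $|\int_0^y\partial_yW_i|\lesssim Me^{-3s/4}|y|^{1/2}=M\varepsilon^{1/4}e^{-s/2}|y|^{1/2}$ for all $y$. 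This produces the $|y|^{1/2}$ scaling directly and uniformly, with no region splitting. Using $\nu=\tfrac14$ in place of $\nu=0$ in your argument would close the gap; once you do that, your proof essentially coincides with the paper's, though the integral mean-value form is noticeably cleaner than the Taylor-plus-remainder route.
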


\begin{proof} Note that
\begin{equation}\label{vi1-3}
(\mu_n(w)-\dot{\xi}(t))(y, s)=(\mu_n(w)-\mu_n(w^0))(y, s)+(\mu_n(w^0)-\dot{\xi}(t))(s).
\end{equation}
Then it follows from \eqref{ii-3}, \eqref{ii-10}, \eqref{ii-12c}, \eqref{iv-2a} and \eqref{iv-3a} that
\begin{equation}\label{vi1-4}
|\mu_n(w^0)-\dot{\xi}(t)|(s)\leq M^2 e^{-s}.
\end{equation}

In addition, due to $\partial_{w_n}\mu_n(0)=1$, it is derived from \eqref{i-7c}, \eqref{ii-10}, \eqref{iv-2a}-\eqref{iv-2b}, \eqref{iv-3a} and \eqref{iv-3b} with $\nu=\frac{1}{4}$ that
\begin{equation}\label{vi1-5}\begin{aligned}
&|\mu_n(w)-\mu_n(w^0)-e^{-\frac{s}{2}}W_0|(y, s)\\
=&|\mu_n(w)-\mu_n(w^0)-e^{-\frac{s}{2}}\int_0^{y}\partial_y W_0(z, s)dz|(y, s)\\
\leq &\sum\limits_{i=1}^{n}|\int_0^1\partial_{w_i}\mu_n(\beta w+(1-\beta)w^0) d\beta-\delta_n^i|\cdot |\int_0^{y}\partial_y W_i(z, s) dz|\\
\leq &M^2\varepsilon^{\frac{1}{4}}e^{-\frac{s}{2}}\min\{|y|^{\frac{1}{2}}, |y|\}.\end{aligned}
\end{equation}
By \eqref{iv-2b}-\eqref{iv-2c}, \eqref{ii-9b} and \eqref{ii-10}, we arrive at
\begin{equation}\label{vi1-51}
|W_0(y, s)|=|\int_0^y\partial_y W_0(z, s)dz|\leq (\frac{7}{6}+\varepsilon^{\frac{1}{14}})|y|.
\end{equation}
Substituting \eqref{vi1-4}-\eqref{vi1-51} into \eqref{vi1-3} yields \eqref{vi1-6}-\eqref{vi1-61} and
then the proof of Lemma \ref{lem5-0} is completed.\end{proof}

We now start the proof of Proposition \ref{lem5-1}.
\begin{proof} Since \eqref{vi1-21} can be easily derived from \eqref{vi1-2}, it suffices to prove \eqref{vi1-2}.
Due to $|\beta_{\tau}|\leq 2$ by \eqref{ii-4} and \eqref{iv-1b}, then it follows from \eqref{vi1-1} and \eqref{vi1-6} that
\begin{equation}\label{vi1-7}
\frac{d}{d\zeta}y^2(\zeta)\geq \frac{7}{12}y^2(\zeta)-2M^2 e^{-\zeta}|y(\zeta)|\geq \frac{13}{24}y^2(\zeta)-M^2 e^{-\zeta}.
\end{equation}
We derive from \eqref{vi1-7} and the assumption $|y_0|\geq\ell$ that
\begin{equation*}
e^{-\frac{13}{24}\zeta}y^2(\zeta)\geq e^{-\frac{13}{24}\zeta_0}y_0^2-M^2 e^{-(1+\frac{13}{24})\zeta_0}\geq \frac{1}{4}e^{-\frac{13}{24}\zeta_0}\ell^2\ (\zeta\geq \zeta_0)
\end{equation*}
and
\begin{equation*}
|y(\zeta)|\geq \frac{1}{2}\ell\quad\text{when $\zeta\geq\zeta_0$}.
\end{equation*}
Together with \eqref{vi1-7}, this yields
\begin{equation*}
\frac{d}{d\zeta}y^2(\zeta)\geq \frac{1}{2}y^2(\zeta).
\end{equation*}
Then \eqref{vi1-2} is obtained and the proof of Proposition \ref{lem5-1} is completed.\end{proof}

\begin{rem}\label{lem5-2} {\it For each point $(y, s)\in\mathbb{R}\times [-\log\varepsilon, +\infty)$, one can define the
following backward characteristics $y=y(\zeta)$ starting from $(y_0, \zeta_0)=(y_0(y, s), \zeta_0)$
\begin{equation}\label{vi1-8}\begin{cases}
\dot{y}(\zeta)=\frac{3}{2}y(\zeta)+e^{\frac{s}{2}}\beta_{\tau}(\mu_n(w)-\dot{\xi}(t))(y(\zeta), \zeta),\ \zeta_0\leq \zeta\leq s,\\
y(\zeta_0)=y_0.
\end{cases}
\end{equation}
According to Proposition \ref{lem5-1}, $(y(\zeta), \zeta)$ can be clarified into one of the following cases:
\begin{enumerate}[{\bf Case} $1$.]
\item Set $\zeta_0=-\log\varepsilon$ and there are no additional constrains on $y(\zeta)$.

\item When $|y|\leq \ell$, set $\zeta_0=-\log\varepsilon$,
then $|y(\zeta)|\leq \ell$ holds for $-\log\varepsilon\leq\zeta\leq s$ due to \eqref{vi1-21}.

\item When $|y|\geq \ell$, set $(y_0, \zeta_0)$ with $|y_0|>\ell$ and $\zeta_0=-\log\varepsilon$ or $|y_0|=\ell$ and $\zeta_0\geq-\log\varepsilon$, then $|y(\zeta)|\geq |y_0| e^{\frac{\zeta-\zeta_0}{2}}$ holds
    for $\zeta_0\leq\zeta\leq s$ due to \eqref{vi1-2}.

\end{enumerate}}

\end{rem}

\subsection{Bootstrap estimate on $W_0$}\label{vi-a}

For each point $(y, s)\in\mathbb{R}\times [-\log\varepsilon, +\infty)$, it follows from
the  {\bf Case} 1 in Remark \ref{lem5-2} and  \eqref{ii-11b} with $\mu=0$ that
\begin{equation}\label{vi2-1}
(\frac{d}{d\zeta}-\frac{1}{2})W_0(y(\zeta), \zeta)=F_0^0(y(\zeta), \zeta).
\end{equation}
By \eqref{iv-1b}, \eqref{iv-2a} and \eqref{iv-3a}, we have
\begin{equation}\label{vi2-2}\begin{aligned}
|F_0^0(y(\zeta), \zeta)|\leq& e^{-\frac{\zeta}{2}}(|\beta_{\tau}||\dot{\kappa}(t)|)(\zeta)
+\beta_{\tau}e^{\zeta}\sum\limits_{j=1}^{n-1}(|a_{nj}(w)||\partial_y W_j|)(y(\zeta), \zeta)\\
\leq& 2Me^{-\frac{\zeta}{2}}+M^2 e^{-\frac{\zeta}{2}}\leq 2M^2 e^{-\frac{\zeta}{2}}.
\end{aligned}
\end{equation}
Combining \eqref{vi2-1} with \eqref{vi2-2} yields
\begin{equation}\label{vi2-3}\begin{aligned}
e^{-\frac{s}{2}}|W_0(y, s)|&=|e^{\frac{1}{2}\log\varepsilon}W_0(y_0, -\log\varepsilon)
+\int_{-\log\varepsilon}^{s}e^{-\frac{\zeta}{2}}F_0^0(y(\zeta), \zeta)d\zeta|\\
&\leq \sqrt{\varepsilon}\|W_0(\cdot, -\log\varepsilon)\|_{L^{\infty}}+2M^2\varepsilon.
\end{aligned}
\end{equation}
Then it is derived from \eqref{vi2-3} and \eqref{iii-7a} that
\begin{equation}\label{vi2-4}
\|W_0(\cdot, s)\|_{L^{\infty}}\leq \varepsilon^{\frac{1}{3}}e^{\frac{s}{2}}.
\end{equation}

\subsection{Bootstrap estimate on $\mathcal{W}$ when $|y|\leq\ell$}\label{vi-b}

For each $(y, s)\in \mathbb{R}\times [-\log\varepsilon, +\infty)$, by
{\bf Case} 1 in Remark \ref{lem5-2}, it follows from \eqref{ii-15} that
\begin{equation}\label{vi3-1}\begin{aligned}
\partial^{\mu}\mathcal{W}(y, s)=&\partial^{\mu}\mathcal{W}(y_0(y, s), -\log\varepsilon)\exp\left({-\int_{-\log\varepsilon}^{s}\mathcal{D}_{\mu}(y(\alpha), \alpha)d\alpha}\right)\\
&+\int_{-\log\varepsilon}^{s}\mathcal{F}_{\mu}(y(\zeta), \zeta)\exp\left(-{\int_{\zeta}^{s}\mathcal{D}_{\mu}(y(\alpha), \alpha)d\alpha}\right)d\zeta.
\end{aligned}
\end{equation}
We next estimate $\partial_y^4\mathcal{W}(y, s)$ when $|y|\leq\ell$. In this situation,
$|y(\alpha)|\leq\ell$ holds for $-\log\varepsilon\leq\alpha\leq s$ by \eqref{vi1-21} in Proposition \ref{lem5-1}.
For $\mu=4$ in \eqref{ii-15}, one has from \eqref{ii-3} and \eqref{ii-14} that
\begin{equation}\label{vi3-11}\begin{aligned}
&\mathcal{D}_4(y(\alpha), \alpha)\\
=&\frac{11}{2}+\beta_{\tau}\overline{W}'(y(\alpha))+\sum\limits_{m=1}^{n}4e^{\frac{s}{2}}\beta_{\tau}\partial_{w_m}\mu_{n}(w)\partial_y W_m\\
=&\frac{11}{2}+\beta_{\tau}\overline{W}'(y(\alpha))+4\beta_{\tau}\partial_y W_0(y(\alpha), \alpha)+\sum\limits_{m=1}^{n}4e^{\frac{s}{2}}\beta_{\tau}\left(\partial_{w_m}\mu_n(w)-\delta_n^m\right)\partial_y W_m\\
=&\frac{11}{2}+5\beta_{\tau}\overline{W}'(y(\alpha))+4\beta_{\tau}\partial_y\mathcal{W}(y(\alpha), \alpha)+\sum\limits_{m=1}^{n}4e^{\frac{s}{2}}\beta_{\tau}\left(\partial_{w_m}\mu_n(w)-\delta_n^m\right)\partial_y W_m.
\end{aligned}
\end{equation}
Due to $\partial_{w_n}\mu_n(0)=1$ and $|y(\alpha)|\leq\ell$ for $-\log\varepsilon\leq\alpha\leq s$,
then by \eqref{vi3-11}, \eqref{i-7}, \eqref{ii-3}, \eqref{ii-9b}, \eqref{iv-1}, \eqref{iv-2} and \eqref{iv-3a},
we arrive at
\begin{equation}\label{vi3-2}
\mathcal{D}_4\geq \frac{11}{2}-5-M\varepsilon^{\frac{1}{12}}\geq\frac{1}{3}.
\end{equation}

It follows from \eqref{vi3-1} and \eqref{vi3-2} that
\begin{equation}\label{vi3-3}\begin{aligned}
|\partial_y^4\mathcal{W}(y, s)|&\leq|\partial_y^4\mathcal{W}(y_0(y, s), -\log\varepsilon)|+3\|\mathcal{F}_{4}(y(\zeta), \zeta)\chi_{[-\log\varepsilon, s]}(\zeta)\|_{L^{\infty}}.
\end{aligned}
\end{equation}

When $|y|\leq\ell$, it is derived from \eqref{vi1-21} in Proposition \ref{lem5-1} and \eqref{iii-7d} that $|y_0|=|y_0(y, s)|\leq \ell<1$ and
\begin{equation}\label{vi3-4}
|\partial_y^4\mathcal{W}(y_0(y, s), -\log\varepsilon)|\leq \varepsilon^{\frac{1}{9}}.
\end{equation}

For $\mathcal{F}_4(y(\zeta), \zeta)$ in \eqref{vi3-3}, it follows from \eqref{ii-15}, \eqref{i-7}, \eqref{ii-9}, \eqref{ii-12c}, \eqref{iv-1}, \eqref{iv-2a}-\eqref{iv-2b} and \eqref{iv-3a}-\eqref{iv-3b} that
\begin{equation}\label{vi3-5}\begin{aligned}
&|\mathcal{F}_4(y(\zeta), \zeta)|\\
\leq& \sum\limits_{\nu=0}^{3}M|\partial_y^{\nu}\mathcal{W}(y(\zeta), \zeta)|+M^2\varepsilon^{\frac{1}{2}}+Me^{\frac{s}{2}}\sum\limits_{\nu=0}^{4}|\partial_y^{\nu}(\mu_{n}(w)
-e^{-\frac{s}{2}}W_0-\dot{\xi}(t))|(y(\zeta), \zeta)
\end{aligned}
\end{equation}
and
\begin{equation}\label{vi3-6}\begin{aligned}
&\sum\limits_{\nu=0}^{4}|\partial_y^{\nu}(\mu_{n}(w)-e^{-\frac{s}{2}}W_0-\dot{\xi}(t))|(y(\zeta), \zeta)\\
=&\sum\limits_{\nu=0}^{4}|\partial_y^{\nu}\left(\sum\limits_{m=1}^{n}\int_0^1(\partial_{w_m}\mu_n(\beta w+(1-\beta)w^0)d\beta-\delta_n^m)\cdot\int_0^{y}\partial_y W_m(z, \zeta)dz\right)|(y(\zeta), \zeta)\\
\leq& M^2 e^{-\frac{3s}{2}}+M^2\varepsilon^{\frac{1}{3}}e^{-\frac{s}{2}}(1+|y(\zeta)|).
\end{aligned}
\end{equation}
By the definition of $\ell$ in \eqref{iv-4} and $|y(\zeta)|\leq\ell $ shown in \eqref{vi1-21} when $|y|\leq\ell$,
then it is derived from \eqref{vi3-5}-\eqref{vi3-6} and \eqref{iv-4} that
\begin{equation*}
|\mathcal{F}_4(y(\zeta), \zeta)|\leq\varepsilon^{\frac{1}{10}}\ell^{\frac{3}{5}}.
\end{equation*}
Combining this with \eqref{vi3-3}-\eqref{vi3-4} yields
\begin{equation}\label{vi-9}
|\partial_y^4\mathcal{W}(y, s)|\leq 2\varepsilon^{\frac{1}{10}}\ell^{\frac{1}{2}}\leq\frac{1}{2}\varepsilon^{\frac{1}{10}}\quad
\text{for $|y|\leq\ell$}.
\end{equation}
In addition, by \eqref{ii-9a} and \eqref{ii-10}, $\partial_y^{\mu}\mathcal{W}(0, s)=0$ for $\mu=1, 2, 3$,
we have that for $|y|\leq\ell$,
\begin{equation}\label{vi-10}\begin{aligned}
|\partial_y^{\nu}\mathcal{W}(y, s)|=&|\int_{0}^{y}\partial_y^{\nu+1}\mathcal{W}(z, s)dz|\\
\leq& 2\varepsilon^{\frac{1}{10}}\ell^{4-\nu+\frac{1}{2}}\leq \frac{1}{2}\varepsilon^{\frac{1}{10}}\ell^{4-\nu}\quad (0\leq\nu\leq 3).
\end{aligned}
\end{equation}

\subsection{Bootstrap estimates on $\mathcal{W}$ when $|y|\leq\mathcal{L}\varepsilon^{\frac{1}{4}}e^{\frac{s}{4}}$}\label{vi-c}

In the region $\{(y, s): |y|\leq \ell\}$, it is derived from \eqref{vi-10} that
\begin{equation}\label{vi-11}
|\eta^{-\frac{1}{6}}(y)\mathcal{W}(y, s)|\leq 2\varepsilon^{\frac{1}{10}}\ell^{4+\frac{1}{2}}\quad\text{for $|y|\leq\ell$}.
\end{equation}
In the region $\{(y, s): \ell<|y|\leq \mathcal{L}\varepsilon^{\frac{1}{4}}e^{\frac{s}{4}}\}$, by {\bf Case} 3 in Remark \ref{lem5-2}
and \eqref{ii-16}, one has
\begin{equation*}\label{vi4-1}
(\frac{d}{d\zeta}+\mathcal{D}_{\mu, \nu}(y(\zeta), \zeta))[\eta^{\nu}\partial^{\mu}\mathcal{W}](y(\zeta) \zeta)=[\eta^{\nu}\mathcal{F}_{\mu}](y(\zeta), \zeta)
\end{equation*}
and
\begin{equation}\label{vi4-2}\begin{aligned}
\left[\eta^{\nu}\partial^{\mu}\mathcal{W}\right](y, s)&=[\eta^{\nu}\partial^{\mu}\mathcal{W}](y_0, \zeta_0)\exp\left(-\int_{\zeta_0}^{s}\mathcal{D}_{\mu, \nu}(y(\alpha), \alpha)d\alpha\right)\\
&+\int_{\zeta_0}^{s}[\eta^{\nu}\mathcal{F}_{\mu}](y(\zeta), \zeta)\exp\left(-\int_{\zeta}^{s}\mathcal{D}_{\mu, \nu}(y(\alpha), \alpha)d\alpha\right)d\zeta.
\end{aligned}
\end{equation}
For $(\mu, \nu)=(0, -\frac{1}{6})$ in \eqref{ii-16}, it comes from \eqref{ii-12}, \eqref{iv-2a} and \eqref{iv-3a} that
\begin{equation}\label{vi4-3}
\mathcal{D}_{0, -\frac{1}{6}}=-\frac{1}{2(1+y^2)}+\beta_{\tau}\overline{W}'+\frac{\beta_{\tau}}{3}\frac{y}{1+y^2}e^{\frac{s}{2}}(\mu_{n}(w)-\dot{\xi}(t)).
\end{equation}
In addition, by \eqref{vi1-61}, \eqref{iv-2b} and \eqref{ii-10}, one has
\begin{equation}\label{vi4-4}
|\mu_n(w)-\dot{\xi}(t)|(y(\alpha), \alpha)\leq 4e^{-\frac{\alpha}{2}}|y(\alpha)|^{\frac{1}{2}}+M^2 e^{-\alpha}.
\end{equation}
Combining \eqref{vi4-3}-\eqref{vi4-4} with  \eqref{ii-9b} and \eqref{iv-1b} yields
\begin{equation*}
|\mathcal{D}_{0, -\frac{1}{6}}(y(\alpha), \alpha)|\leq 10\eta^{-\frac{1}{4}}(y(\alpha))+M^2 e^{-\frac{\alpha}{2}},
\end{equation*}
and then it follows from Proposition \ref{lem5-1} and {\bf Case 3} in Remark \ref{lem5-2} that
\begin{equation}\label{vi4-5}
\int_{\zeta_0}^{+\infty}|\mathcal{D}_{0, -\frac{1}{6}}|(y(\alpha), \alpha)d\alpha
\leq 2M^2\varepsilon^{\frac{1}{2}}+10\int_{\zeta_0}^{+\infty}\frac{1}{(1+\ell^2 e^{\frac{\alpha-\zeta_0}{2}})^{\frac{1}{4}}}d\alpha\leq 200 \ln\frac{1}{\ell}.
\end{equation}
From \eqref{vi4-2} with $(\mu, \nu)=(0, -\frac{1}{6})$ and \eqref{vi4-5}, we obtain
\begin{equation}\label{vi4-6}\begin{aligned}
&|\eta^{-\frac{1}{6}}\mathcal{W}|(y, s)\\
\leq& \frac{1}{\ell^{200}}\left(\eta^{-\frac{1}{6}}(y(\zeta_0))|\mathcal{W}|(y(\zeta_0), \zeta_0)+\int_{\zeta_0}^{s}\eta^{-\frac{1}{6}}(y(\zeta))|\mathcal{F}_0|(y(\zeta), \zeta)d\zeta\right)\quad\text{for $|y|\geq\ell$}.
\end{aligned}
\end{equation}
With \eqref{ii-9b}, \eqref{iii-6}, \eqref{iv-1}, \eqref{iv-3a} and \eqref{vi1-4}-\eqref{vi1-5} , $\mathcal{F}_0$ in \eqref{ii-15} satisfies
\begin{equation}\label{vi4-7}\begin{aligned}
&|\mathcal{F}_0|(y(\zeta), \zeta)\\
\leq& 2e^{\frac{s}{2}}\eta^{-\frac{1}{3}}(y(\zeta))(|\mu_n(w^0)-\dot{\xi}(t)|+|\mu_n(w)-\mu_{n}(w^0)-e^{-\frac{s}{2}}W_0|)\\
&+M^2 e^{-\frac{s}{2}}+2M\varepsilon\eta^{-\frac{1}{6}}(y(\zeta))\\
\leq& 2e^{\frac{s}{2}}\eta^{-\frac{1}{3}}(y(\zeta))(M^2\varepsilon^{\frac{1}{4}}e^{-\frac{\zeta}{2}}\eta^{\frac{1}{4}}(y(\zeta))
+M^2 e^{-\zeta})+M^2 e^{-\frac{\zeta}{2}}+2M\varepsilon\eta^{-\frac{1}{6}}(y(\zeta))\\
\leq& M^3 e^{-\frac{\zeta}{2}}+M^3\varepsilon^{\frac{1}{4}}\eta^{-\frac{1}{12}}(y(\zeta)).
\end{aligned}
\end{equation}
When $|y|\leq\mathcal{L}\varepsilon^{\frac{1}{4}}e^{\frac{s}{4}}$ and $\zeta_0=-\log\varepsilon$, one has $|y(\zeta_0)|\leq\mathcal{L}$ due to \eqref{vi1-2} in Proposition \ref{lem5-1}. Thus, combining \eqref{vi4-7} with \eqref{vi4-6}, \eqref{vi-11}, \eqref{iii-7b}
and {\bf Case 3} in Remark \ref{lem5-2} shows
\begin{equation}\label{vi-16}
|\eta^{-\frac{1}{6}}(y)\mathcal{W}|(y, s)\leq\frac{1}{2}\varepsilon^{\frac{1}{11}}\quad\text{for $|y|\leq \mathcal{L}\varepsilon^{\frac{1}{4}}e^{\frac{s}{4}}$}.
\end{equation}

\subsection{Bootstrap estimates on $\partial_y\mathcal{W}$ when $|y|\leq \mathcal{L}\varepsilon^{\frac{1}{4}}e^{\frac{s}{4}}$}\label{vi-d}

As in Subsection \ref{vi-c}, the $L^{\infty}$ estimate of $\eta^{\frac{1}{3}}\partial_y\mathcal{W}$ is still considered in
the cases of $|y|\leq \ell$ and $\ell<|y|\leq \mathcal{L}\varepsilon^{\frac{1}{4}}e^{\frac{s}{4}}$. In the region $\{(y, s): \ell<|y|\leq \mathcal{L}\varepsilon^{\frac{1}{4}}e^{\frac{s}{4}}\}$, by {\bf Case} 3 in Remark \ref{lem5-2},
it follows from \eqref{vi1-2} in Proposition \ref{lem5-1} that when $|y|\leq \mathcal{L}\varepsilon^{\frac{1}{4}}e^{\frac{s}{4}}$,
\begin{equation}\label{vi-170}
|y(\zeta)|\leq \mathcal{L}\varepsilon^{\frac{1}{4}}e^{\frac{\zeta}{4}},\ \zeta_0\leq \zeta\leq s.
\end{equation}
For $|y|\leq\ell$, by \eqref{vi-10}, one has that
\begin{equation}\label{vi-17}
\eta^{\frac{1}{3}}(y)|\partial_y\mathcal{W}(y, s)|\leq 2\varepsilon^{\frac{1}{10}}\ell^{3+\frac{1}{2}}\ (|y|\leq\ell).
\end{equation}
Next, we estimate $\eta^{\frac{1}{3}}\partial_y\mathcal{W}$ when $\ell\leq |y|\leq \mathcal{L}\varepsilon^{\frac{1}{4}}e^{\frac{s}{4}}$.
For $(\mu, \nu)=(1, \frac{1}{3})$ in \eqref{ii-16}, we have
\begin{equation*}\begin{aligned}
\mathcal{D}_{1, \frac{1}{3}}=&\frac{1}{1+y^2}+\beta_{\tau}\overline{W}'+\beta_{\tau}e^{\frac{s}{2}}\partial_y \mu_{n}(w)-\frac{2y}{3(1+y^2)}\beta_{\tau}e^{\frac{s}{2}}(\mu_{n}(w)-\dot{\xi}(t))\\
=&\frac{1}{1+y^2}+\beta_{\tau}\overline{W}'+\beta_{\tau}\partial_{w_n}\mu_n(w)\partial_y W_0+\beta_{\tau}e^{\frac{s}{2}}\sum\limits_{j\neq n}\partial_{w_j}\mu_n(w)\partial_y W_j\\
&-\frac{2y}{3(1+y^2)}\beta_{\tau}e^{\frac{s}{2}}(\mu_n(w)-\dot{\xi}(t)).
\end{aligned}
\end{equation*}
Combining this with \eqref{i-7c}, \eqref{ii-9b}, \eqref{iv-1b}, \eqref{iv-2a}-\eqref{iv-2b}, \eqref{iv-3a}
and \eqref{vi4-4}-\eqref{vi4-5} yields
\begin{equation*}
|\mathcal{D}_{1, \frac{1}{3}}(y(\alpha), \alpha)|\leq 10\eta^{-\frac{1}{4}}(y(\alpha))+M^2 e^{-\frac{\alpha}{2}}
\end{equation*}
and
\begin{equation}\label{vi-18}
\int_{\zeta_0}^{+\infty}|\mathcal{D}_{1, \frac{1}{3}}|(y(\alpha), \alpha)d\alpha\leq 200\ln\frac{1}{\ell}.
\end{equation}
It is derived from \eqref{vi4-2} and \eqref{vi-18} that for $|y|\geq\ell$,
\begin{equation}\label{vi-19}\begin{aligned}
&|\eta^{\frac{1}{3}}(y)\partial_y\mathcal{W}|(y, s)\\
\leq& \frac{1}{\ell^{200}}\left(\eta^{\frac{1}{3}}(y(\zeta_0))|\partial_y\mathcal{W}|(y(\zeta_0), \zeta_0)+\int_{\zeta_0}^{s}\eta^{\frac{1}{3}}(y(\zeta))|\mathcal{F}_1|(y(\zeta), \zeta)d\zeta\right).
\end{aligned}
\end{equation}
For $\mathcal{F}_1$ in \eqref{vi-19}, one has from  \eqref{ii-15}, \eqref{i-7d} and \eqref{iv-1}, \eqref{iv-2a}, \eqref{iv-3a} that
\begin{equation}\label{vi-20}\begin{aligned}
|\mathcal{F}_1|&\leq 2|\overline{W}''\mathcal{W}|+Me^{s}|W|\sum\limits_{j=1}^{n-1}|\partial_y^2 W_j|+Me^{s}\sum\limits_{j=1}^{n-1}|\partial_y W_j|^2+Me^{\frac{s}{2}}\sum\limits_{j=1}^{n-1}|\partial_y W_0 \partial_y W_j|\\
&+2e^{\frac{s}{2}}|\overline{W}''||\mu_{n}(w)-e^{-\frac{s}{2}}W_0-\dot{\xi}(t)|+Me^{\frac{s}{2}}|\overline{W}'|\sum\limits_{j=1}^{n-1}|\partial_y W_j|\\
&+2|\overline{W}'||\partial_{w_n}\mu_{n}(w)-1||\partial_y W_0|+2M\varepsilon(|\overline{W}'|^2+|\overline{W}''\overline{W}|)
=\sum\limits_{k=1}^{8}I_k.
\end{aligned}
\end{equation}
For $I_1$, it follows from \eqref{ii-9b}, \eqref{vi-16} and \eqref{vi-170} that
\begin{equation}\label{vi-21}
\eta^{\frac{1}{3}}(y(\zeta))I_1(y(\zeta), \zeta)\leq 2\varepsilon^{\frac{1}{11}}\eta^{-\frac{1}{3}}(y(\zeta)).
\end{equation}
For $I_2$, we have from \eqref{ii-3}, \eqref{iv-1a}, \eqref{vi2-4}, \eqref{iv-3a} and \eqref{iv-3c} with $\nu^+=\frac{1}{24}$ that
\begin{equation}\label{vi-22}
\eta^{\frac{1}{3}}(y(\zeta))I_2(y(\zeta), \zeta)\leq M^4\varepsilon^{\frac{1}{3}}e^{-\frac{1}{8}\zeta}.
\end{equation}
For $I_3$, \eqref{iv-3b} with $\nu=\frac{1}{6}$ shows
\begin{equation}\label{vi-23}
\eta^{\frac{1}{3}}(y(\zeta))I_3(y(\zeta), \zeta)\leq M^3 e^{-\zeta}.
\end{equation}
In the similar way, due to $\partial_{w_n}\mu_n(0)=1$, it is derived from \eqref{i-7}, \eqref{iv-2a}, \eqref{iv-3a}
and \eqref{ii-9b} that
\begin{equation}\label{vi-24}
\eta^{\frac{1}{3}}(y(\zeta))(I_4+I_6+I_7+I_8)(y(\zeta), \zeta)\leq M^3 e^{-\zeta}
+M^2\varepsilon^{\frac{1}{3}}\eta^{-\frac{1}{3}}(y(\zeta)).
\end{equation}
In addition, it follows from \eqref{ii-9b} and \eqref{vi1-61} that
\begin{equation}\label{vi-25}
\eta^{\frac{1}{3}}(y(\zeta))I_5(y(\zeta), \zeta)\leq M^2\varepsilon^{\frac{1}{8}}\eta^{-\frac{1}{2}}(y(\zeta)).
\end{equation}
Substituting \eqref{vi-21}-\eqref{vi-25} into \eqref{vi-20} yields
\begin{equation}\label{vi-26}
\eta^{\frac{1}{3}}(y(\zeta))|\mathcal{F}_1|(y(\zeta), \zeta)\leq 4\varepsilon^{\frac{1}{11}}\eta^{-\frac{1}{3}}(y(\zeta))+\varepsilon^{\frac{1}{11}}e^{-\frac{1}{8}\zeta}.
\end{equation}
Analogously to obtain \eqref{vi-16}, combining \eqref{vi-26} with \eqref{vi-19}, \eqref{vi-17} and \eqref{iii-7b} derives
\begin{equation}\label{vi-27}
\eta^{\frac{1}{3}}(y)|\partial_y\mathcal{W}(y, s)|\leq\frac{1}{2}\varepsilon^{\frac{1}{12}}\quad \text{for $|y|\leq\mathcal{L}\varepsilon^{\frac{1}{4}}e^{\frac{s}{4}}$}.
\end{equation}

\subsection{More delicate estimates for $W_0$}\label{vi-e}

In the Subsection, we mainly estimate the weighted $L^{\infty}$ norms of $\eta^{-\frac{1}{6}}W_0$ and $\eta^{\frac{1}{3}}\partial_y W_0$
in the whole spatial space.
Since the proof procedures are very similar to the processes in Subsection \ref{vi-c} and Subsection \ref{vi-d},
we just give the sketch of the related verifications. For $\mu\in\mathbb{N}_0$ and $\nu\in\mathbb{R}$, it is
derived from \eqref{ii-5} that
\begin{equation}\label{vi-28}
\left(\partial_s+(\frac{3}{2}y+\beta_{\tau}e^{\frac{s}{2}}(\mu_n(w)-\dot{\xi}(t)))\partial_y\right)[\eta^{\nu}\partial^{\mu}W_0]+\overline{D}_{\mu, \nu}[\eta^{\nu}\partial^{\mu}W_0]=\eta^{\nu}\overline{F}_{\mu}^{0},
\end{equation}
where
\begin{equation*}
\overline{D}_{\mu, \nu}=\frac{3\mu-1}{2}+\mu\beta_{\tau} e^{\frac{s}{2}}\partial_y\mu_n(w)+\beta_{\tau}\partial_y W_0\boldsymbol{1}_{\mu\geq 2}-\frac{2\nu y}{1+y^2}(\frac{3}{2}y+\beta_{\tau}e^{\frac{s}{2}}(\mu_{n}(w)-\dot{\xi}(t)))
\end{equation*}
and
\begin{equation*}\begin{aligned}
\overline{F}_{\mu}^{0}&=-\sum\limits_{2\leq\beta\leq\mu-1}
C_{\mu}^{\beta}\beta_{\tau}e^{\frac{s}{2}}\partial_y^{\beta}\mu_n(w)\partial_y^{\mu-\beta+1}W_0
-\beta_{\tau}e^{\frac{s}{2}}\partial_y^{\mu}(\mu_n(w)-w_n)\partial_y W_0\boldsymbol{1}_{\mu\geq 2}\\
&-\sum\limits_{j=1}^{n-1}\beta_{\tau}e^{s}\partial_y^{\mu}(a_{nj}(w)\partial_y W_j)-\beta_{\tau}e^{-\frac{s}{2}}\dot{\kappa}(t)\delta_{\mu}^0.
\end{aligned}
\end{equation*}
When $|y|\leq\mathcal{L}\varepsilon^{\frac{1}{4}}e^{\frac{s}{4}}$,  it follows from \eqref{ii-9b}, \eqref{vi-16}
and \eqref{vi-27} that for $|y|\leq\mathcal{L}\varepsilon^{\frac{1}{4}}e^{\frac{s}{4}}$,
\begin{equation}\label{vi-29}
\eta^{-\frac{1}{6}}(y)|W_0(y, s)|\leq 1+\frac{1}{2}\varepsilon^{\frac{1}{11}},\ \eta^{\frac{1}{3}}(y)|\partial_y W_0(y, s)|\leq 1+\frac{1}{2}\varepsilon^{\frac{1}{12}}.
\end{equation}
When $|y|\geq \mathcal{L}\varepsilon^{\frac{1}{4}}e^{\frac{s}{4}}$, the backward characteristics $y=y(\zeta)$ is
defined by \eqref{vi1-8} with $(y_0, \zeta_0)$ satisfying $|y_0|\geq \mathcal{L}, \zeta_0=-\log\varepsilon$ or $|y_0|=\mathcal{L}\varepsilon^{\frac{1}{4}}e^{\frac{\zeta_0}{4}}, \zeta_0>-\log\varepsilon$. In this case, we have $|y(\zeta)|\geq\mathcal{L}e^{\frac{\zeta-\zeta_0}{4}}$ for $\zeta\geq\zeta_0$ due to \eqref{vi1-2}.
Moreover, it is derived from \eqref{vi-28} that
\begin{equation}\label{vi6-0}\begin{aligned}
\left[\eta^{\nu}\partial^{\mu}W_0\right](y, s)&=[\eta^{\nu}\partial^{\mu}W_0](y_0, \zeta_0)\exp \left(-\int_{\zeta_0}^{s}\overline{D}_{\mu, \nu}(y(\alpha), \alpha)d\alpha\right)\\
&+\int_{\zeta_0}^{s}[\eta^{\nu}\overline{F}_{\mu}^0](y(\zeta), \zeta)\exp\left(-\int_{\zeta}^{s}\overline{D}_{\mu, \nu}(y(\alpha), \alpha)d\alpha\right)d\zeta.
\end{aligned}
\end{equation}
In addition, by \eqref{vi4-4} and \eqref{iv-2a}-\eqref{iv-2b}, \eqref{iv-3a}, we have
\begin{equation}\label{vi6-1}\begin{aligned}
&|\overline{D}_{0, -\frac{1}{6}}(y(\alpha), \alpha)|\\
=&\left|-\frac{1}{2(1+y(\alpha)^2)}+\frac{y(\alpha)}{3(1+y(\alpha)^2)}e^{\frac{\alpha}{2}}\beta_{\tau}(\mu_n(w)-\dot{\xi}(t))\right|\\
\leq&\frac{1}{2}\eta^{-1}(y(\alpha))+e^{\frac{\alpha}{2}}\eta^{-\frac{1}{2}}(y(\alpha))|\mu_n(w)-\dot{\xi}(t)|\\
\leq&5\eta^{-\frac{1}{4}}(y(\alpha))+M^2 e^{-\frac{\alpha}{2}}
\end{aligned}
\end{equation}
and
\begin{equation}\label{vi6-2}\begin{aligned}
&|\overline{D}_{1, \frac{1}{3}}(y(\alpha), \alpha)|\\
=&\left|\frac{1}{1+y(\alpha)^2}+e^{\frac{\alpha}{2}}\beta_{\tau}\partial_y\mu_n(w)
-\frac{2y(\alpha)}{3(1+y(\alpha)^2)}e^{\frac{\alpha}{2}}\beta_{\tau}(\mu_n(w)-\dot{\xi}(t))\right|\\
\leq&\eta^{-1}(y(\alpha))+2e^{\frac{\alpha}{2}}\eta^{-\frac{1}{2}}(y(\alpha))|\mu_n(w)
-\dot{\xi}(t)|+2e^{\frac{\alpha}{2}}\sum\limits_{j=1}^{n}|\partial_{w_j}\mu_n(w)||\partial_y W_j|\\
\leq& 20\eta^{-\frac{1}{4}}(y(\alpha))+M^3 e^{-\frac{\alpha}{2}}.
\end{aligned}
\end{equation}
Similarly to \eqref{vi4-5}, for $|y(\zeta)|\geq\mathcal{L}e^{\frac{\zeta-\zeta_0}{4}}$,
one has from \eqref{vi6-1}-\eqref{vi6-2} that
\begin{equation}\label{vi-31}\begin{aligned}
&\int_{\zeta_0}^{s}|\overline{D}_{0, -\frac{1}{6}}|(y(\alpha), \alpha)d\alpha+\int_{\zeta_0}^{s}|\overline{D}_{1, \frac{1}{3}}|(y(\alpha), \alpha)d\alpha\\
\leq& 4M^3\varepsilon^{\frac{1}{2}}+\int_{\zeta_0}^{s}\frac{25}{(1+\mathcal{L}^2 e^{\frac{\zeta-\zeta_0}{2}})^{\frac{1}{4}}}d\zeta\\
\leq & 4M^3\varepsilon^{\frac{1}{2}}+600\ln(1+\mathcal{L}^{-1})\leq \varepsilon^{\frac{1}{20}}.
\end{aligned}
\end{equation}
Based on \eqref{vi-31},  together with \eqref{vi6-0} this yields
\begin{equation}\label{vi-32}
\eta^{-\frac{1}{6}}(y)|W_0(y, s)|\leq (1+2\varepsilon^{\frac{1}{20}})\left(\eta^{-\frac{1}{6}}(y_0)|W_0(y_0, \zeta_0)|+\int_{\zeta_0}^{s}\eta^{-\frac{1}{6}}(y(\zeta))|\overline{F}_0^{0}|(y(\zeta), \zeta)d\zeta\right)
\end{equation}
and
\begin{equation}\label{vi-33}
\eta^{\frac{1}{3}}(y)|\partial_y W_0(y, s)|\leq (1+2\varepsilon^{\frac{1}{20}})\left(\eta^{\frac{1}{3}}(y_0)|\partial_y W_0(y_0, \zeta_0)|+\int_{\zeta_0}^{s}\eta^{\frac{1}{3}}(y(\zeta))|\overline{F}_1^{0}|(y(\zeta), \zeta)d\zeta\right).
\end{equation}
By \eqref{i-7}, \eqref{iv-1}-\eqref{iv-3}, \eqref{iv-3b} with $\nu=\frac{1}{3}$
and  \eqref{iv-3c} with $\nu^+=\frac{1}{24}$, then $\overline{F}_0^{0}$ and $\overline{F}_1^{0}$ in \eqref{vi-28} satisfy
\begin{equation}\label{vi-34}
|\overline{F}_0^{0}(y(\zeta), \zeta)|\leq 4M^2 e^{-\frac{\zeta}{2}},\ |\overline{F}_1^{0}(y(\zeta), \zeta)|\leq 2M^2e^{-\frac{\zeta}{9}}\eta^{-\frac{1}{3}}(y(\zeta)).
\end{equation}
Therefore, we derive from \eqref{iii-7b}, \eqref{iii-7c}, \eqref{vi-29} and \eqref{vi-32}-\eqref{vi-34} that
\begin{equation}\label{vi-35}
\eta^{-\frac{1}{6}}(y)|W_0(y, s)|\leq 1+\varepsilon^{\frac{1}{21}},\ \eta^{\frac{1}{3}}(y)|\partial_y W_0(y, s)|\leq 1
+\varepsilon^{\frac{1}{21}}.
\end{equation}
For the estimates of $\partial_y^{\mu}W_0$, together with Lemma \ref{lemA-3} for $u=\partial_y^{\mu}W_0$ and $d=1, p, q=\infty, r=2, j=\mu-1, m=\mu_0-1$, it comes from \eqref{ii-3}, \eqref{iv-6} and \eqref{vi-35} that for $2\leq \mu\leq 4$ and $\alpha
=\frac{\mu-1}{\mu_0-\frac{1}{2}}\in (0, \frac{6}{11}]$,
\begin{equation}\label{vi-36}
\|\partial_y^{\mu}W_0(\cdot, s)\|_{L^{\infty}}\leq M^{\frac{1}{20}}\|\partial_y^{\mu_0}W_0(\cdot, s)\|_{L^2}^{\alpha}\|\partial_y W_0(\cdot, s)\|_{L^{\infty}}^{1-\alpha}
\leq 2^{1-\alpha}M^{\frac{1}{20}+\alpha}\leq M^{\frac{3}{5}}.
\end{equation}

\section{Bootstrap  estimates on good components of $W$}\label{vii}

In the section, we will apply the characteristics method  to establish a series of estimates of $W_m\ (m\neq n)$.

\subsection{Framework for the characteristics method}\label{vii-a}

For $1\leq m\leq  n-1$ and any point $(y_0, \zeta_0)\in\mathbb{R}\times [-\log\varepsilon, +\infty)$, we consider the
following forward characteristics  $y(\zeta):=y(\zeta; y_0, \zeta_0)$ of  \eqref{ii-18} which
starts from $(y_0, \zeta_0)$:
\begin{equation}\label{vii1-0}\begin{cases}
\dot{y}(\zeta)=\frac{3}{2}y(\zeta)+e^{\frac{\zeta}{2}}\beta_{\tau}(\mu_m(w)-\dot{\xi}(t))(y(\zeta), \zeta),\\
y(\zeta_0)=y_0.
\end{cases}
\end{equation}
This yields that for $s\geq \zeta_0$,
\begin{equation}\label{vii-1}
y(\zeta)e^{-\frac{3}{2}\zeta}=y_0 e^{-\frac{3}{2}\zeta_0}+\int_{\zeta_0}^{\zeta}e^{-\alpha}\left(\beta_{\tau}(\cdot)(\mu_m(w)-\dot{\xi}(\cdot))\right)(y(\alpha), \alpha)d\alpha:=G_m(\zeta; y_0, \zeta_0).
\end{equation}

Next we discuss the positions of $y(\zeta; y_0, \zeta_0)$ for the different cases of $(y_0, \zeta_0)$.

\begin{lem}\label{lem7-1} {\it For each $i_0\leq m\leq n-1$, $a_m:=\mu_n(0)>0$  is due to \eqref{i-7a} and the assumption $\mu_n(0)=0$ in Remark \ref{rem1-1}. Then for any point $(y_0, \zeta_0)\in\mathbb{R}\times [-\log\varepsilon, +\infty)$, $y(\zeta):=y(\zeta; y_0, \zeta_0)$
 can be classified into the following six cases:
\begin{enumerate}[{\bf Case} $1$.]
\item When $y_0<-4a_m e^{\frac{\zeta_0}{2}}$,\ $y(\zeta)\leq -a_m e^{\frac{3}{2}\zeta-\zeta_0}<0$ holds for $\zeta\geq\zeta_0.$

\item When $ -\frac{a_m}{4}e^{\frac{\zeta_0}{2}}\leq y_0\leq 0$, there exists a number $\zeta^*\geq \zeta_0$ such that $G_m(\zeta^*; y_0, \zeta_0)=0$ and
\begin{equation*}\begin{cases}
-\frac{3}{2}a_m (e^{-\zeta}-e^{-\zeta^*}) e^{\frac{3}{2}\zeta}\leq y(\zeta)\leq -\frac{a_m}{2}(e^{-\zeta}-e^{-\zeta^*})e^{\frac{3}{2}\zeta}\leq 0
\quad\text{for $\zeta_0\leq \zeta\leq \zeta^*$},\\[2mm]
y(\zeta)\geq \frac{a_m}{2}(e^{-\zeta^*}-e^{-\zeta})e^{\frac{3}{2}\zeta}\geq 0\quad\text{for $\zeta\geq \zeta^*$}.
\end{cases}
\end{equation*}
\item When $y_0\geq 0$, one has $y(\zeta)\geq \frac{a_m}{2}(e^{-\zeta_0}-e^{-\zeta})e^{\frac{3}{2}\zeta}\geq 0$
\quad \text{for $\zeta\geq\zeta_0.$}

\item When $(y_0, \zeta_0)\in D^+$ and $(y(\zeta), \zeta)$ lies in the domain $D^+$, there holds
\begin{equation*}
-4a_m e^{\frac{\zeta}{2}}\leq y(\zeta)\leq -\frac{a_m}{4}e^{\frac{\zeta}{2}}<0\quad\text{for $\zeta\geq \zeta_0$},
\end{equation*}
where $D^{+}=\{(y, \zeta): -4a_m e^{\frac{\zeta}{2}}\leq y\leq -\frac{a_m}{4}e^{\frac{\zeta}{2}},\ \zeta\geq -\log\varepsilon\}$.

\item When $(y_0, \zeta_0)\in D^+$ and the characteristics $y=y(\zeta)$ goes through $\partial D^+$ at some point $(\hat y, \hat \zeta)$
with $\hat y=-4a_m e^{\frac{\hat\zeta}{2}}$, we have
\begin{equation*}\begin{cases}
(y(\zeta), \zeta)\in D^+\quad\text{for $\zeta_0\leq \zeta\leq\hat\zeta$},\\
y(\zeta)\leq -a_m e^{\frac{3}{2}\zeta-\hat\zeta}<0\quad\text{for $\zeta\geq\hat\zeta$.}
\end{cases}
\end{equation*}

\item When $(y_0, \zeta_0)\in D^+$ and the characteristics $y=y(\zeta)$ goes through $\partial D^+$ at
some point $(\hat y, \hat \zeta)$ with $\hat y=-\frac{a_m}{2} e^{\frac{\hat\zeta}{2}}$,
there exists $\tilde{\zeta}>\hat\zeta$ such that $G_m(\tilde{\zeta}; y_0, \zeta_0)=0$ and $y=y(\zeta)$ can be divided into
the three parts as:
\begin{equation*}\begin{cases}
(y(\zeta), \zeta)\in D^{+}\quad\text{for $\zeta_0\leq \zeta\leq \hat \zeta$},\\
-2a_m (e^{-\zeta}-e^{-\tilde{\zeta}})e^{\frac{3}{2}\zeta}\leq y(\zeta)\leq -\frac{a_m}{2}(e^{-\zeta}-e^{-\tilde{\zeta}})
e^{\frac{3}{2}\zeta}\leq 0\quad\text{for $\hat \zeta\leq \zeta\leq \tilde{\zeta}$},\\
y(\zeta)\geq\frac{a_m}{2}(e^{-\tilde{\zeta}}-e^{-\zeta})e^{\frac{3}{2}\zeta}\geq 0\quad\text{for $\zeta\geq \tilde{\zeta}$}.
\end{cases}
\end{equation*}
\end{enumerate}
}
\end{lem}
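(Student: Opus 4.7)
The plan is to exploit the integral form \eqref{vii-1}, namely
$$y(\zeta)e^{-\frac{3}{2}\zeta}=G_m(\zeta;y_0,\zeta_0)=y_0 e^{-\frac{3}{2}\zeta_0}+\int_{\zeta_0}^{\zeta}e^{-\alpha}\,\beta_{\tau}\bigl(\mu_m(w)-\dot{\xi}(t)\bigr)(y(\alpha),\alpha)\,d\alpha,$$
together with uniform two-sided bounds on the integrand, to read off the six cases. The central preliminary estimate is
$$\frac{a_m}{2}\leq \beta_{\tau}(t(\zeta))\bigl(\mu_m(w(y(\zeta),\zeta))-\dot{\xi}(t(\zeta))\bigr)\leq \frac{3a_m}{2},\qquad \zeta\geq -\log\varepsilon,$$
valid for $\varepsilon$ sufficiently small. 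To prove it, I combine \eqref{iv-1b} with the smoothness of $\mu_m$ and $A(w)$: from $|\dot{\tau}|\leq M\varepsilon^{1/3}$ one has $\beta_{\tau}=1+O(\varepsilon^{1/3})$; from \eqref{ii-3}, \eqref{iv-1a}, \eqref{iv-2a} and \eqref{iv-3a} one gets $|w_n|\leq e^{-s/2}\|W_0\|_{L^{\infty}}+|\kappa|\lesssim\varepsilon^{1/3}$ and $|w_i|\leq M\varepsilon$ for $1\leq i\leq n-1$, so $\mu_m(w)=\mu_m(0)+O(\varepsilon^{1/3})=a_m+O(\varepsilon^{1/3})$; and $|\dot{\xi}|\leq M\varepsilon$. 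Since $a_m=\mu_m(0)>0$ is a fixed positive constant independent of $\varepsilon$ (the inequality $a_m>0$ being a consequence of \eqref{i-7a} together with the normalization $\mu_n(0)=0$), these $O(\varepsilon^{1/3})$ errors fit inside the sandwich $[a_m/2,3a_m/2]$ for small $\varepsilon$. Integrating then gives
$$y_0 e^{-\frac{3}{2}\zeta_0}+\frac{a_m}{2}\bigl(e^{-\zeta_0}-e^{-\zeta}\bigr)\leq G_m(\zeta)\leq y_0 e^{-\frac{3}{2}\zeta_0}+\frac{3a_m}{2}\bigl(e^{-\zeta_0}-e^{-\zeta}\bigr).$$

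Cases 1 and 3 are then immediate substitutions. In Case 1, the hypothesis $y_0<-4a_m e^{\zeta_0/2}$ and the upper bound yield $G_m(\zeta)\leq -4a_m e^{-\zeta_0}+\tfrac{3a_m}{2}e^{-\zeta_0}\leq -a_m e^{-\zeta_0}$, hence $y(\zeta)\leq -a_m e^{3\zeta/2-\zeta_0}$. In Case 3, $y_0\geq 0$ combined with the lower bound gives the claim directly. For Case 2, the integrand is strictly positive so $G_m$ is strictly increasing in $\zeta$; since $G_m(\zeta_0)=y_0 e^{-3\zeta_0/2}\in[-\tfrac{a_m}{4}e^{-\zeta_0},0]$ and the integral contribution can reach at least $\tfrac{a_m}{2}e^{-\zeta_0}$ as $\zeta\to\infty$, $G_m$ eventually becomes strictly positive. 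The intermediate value theorem produces a unique $\zeta^{*}\geq \zeta_0$ with $G_m(\zeta^{*})=0$, after which I rewrite $G_m(\zeta)=\int_{\zeta^{*}}^{\zeta}e^{-\alpha}\beta_{\tau}(\mu_m-\dot{\xi})\,d\alpha$ and apply the sandwich on each of the intervals $[\zeta_0,\zeta^{*}]$ and $[\zeta^{*},\infty)$ to obtain the two-sided estimates asserted in Case 2.

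For Cases 4--6, the hypothesis $(y_0,\zeta_0)\in D^{+}$ means the trajectory starts in the strip $D^{+}$. Case 4 is tautological: if $(y(\zeta),\zeta)\in D^{+}$ for all $\zeta\geq \zeta_0$, the asserted bounds are precisely the definition of $D^{+}$. For Cases 5 and 6, let $\hat\zeta$ be the first exit time of $y(\zeta)$ from $D^{+}$; by continuity $(\hat y,\hat\zeta)$ lies on $\partial D^{+}$, either on the lower boundary $y=-4a_m e^{\zeta/2}$ (Case 5) or on the upper boundary (Case 6). Case 5 then reduces to applying the Case 1 analysis with initial data $(\hat y,\hat\zeta)$, yielding $y(\zeta)\leq -a_m e^{3\zeta/2-\hat\zeta}$ for $\zeta\geq \hat\zeta$, while on $[\zeta_0,\hat\zeta]$ the trajectory remains in $D^{+}$ by the definition of $\hat\zeta$. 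Case 6 is parallel, reducing to the Case 2 analysis from $(\hat y,\hat\zeta)$, which delivers the zero $\tilde\zeta$ of $G_m$ and the three-piece description. The main obstacle is the preliminary sandwich estimate: it is essential that the bootstrap bounds propagate to the composed quantity $\beta_{\tau}(\mu_m(w)-\dot{\xi})$ along the characteristic, and in particular that the apparently singular factor $e^{s/2}$ in the decomposition $w_n=e^{-s/2}W_0+\kappa$ is exactly compensated by the $L^{\infty}$ control $\|W_0\|_{L^{\infty}}\lesssim \varepsilon^{1/3}e^{s/2}$ from \eqref{iv-2a}, producing an $s$-uniform bound on $|w_n|$ that allows $a_m$ to dominate.
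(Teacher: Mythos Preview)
Your proposal is correct and follows essentially the same approach as the paper: the key step is the sandwich estimate $|\beta_{\tau}(\mu_m(w)-\dot{\xi})-a_m|\leq \frac{a_m}{2}$ (the paper's \eqref{vii1-1}), after which Cases 1--3 are read off from the integral formula \eqref{vii-1} exactly as you do, and Cases 4--6 are reduced to the earlier cases by restarting the analysis at the exit point $(\hat y,\hat\zeta)$. Your observation that $G_m$ is strictly increasing, giving uniqueness of $\zeta^*$, is a small bonus the paper does not state explicitly.
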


\begin{proof} Since $a_m>0$ for $i_0\leq m\leq n-1$, by \eqref{ii-3}, \eqref{iv-1}, \eqref{iv-2a} and \eqref{iv-3a}, we
then have
\begin{equation}\label{vii1-1}
|\beta_{\tau}(\mu_m(w)-\dot{\xi}(t))-a_m|\leq M^3\varepsilon^{\frac{1}{3}}\leq \frac{a_m}{2}.
\end{equation}

When $y_0<-4a_m e^{\frac{\zeta_0}{2}}$, it is derived from \eqref{vii-1}  and \eqref{vii1-1} that
\begin{equation}\label{vii1-2}
y(\zeta)e^{-\frac{3}{2}\zeta}\leq -4a_m e^{-\zeta_0}+\frac{3}{2}a_m (e^{-\zeta_0}-e^{-\zeta})\leq -a_m e^{-\zeta_0}.
\end{equation}
This shows {\bf Case} 1.

When $-\frac{a_m}{4}e^{\frac{\zeta_0}{2}}\leq y_0\leq 0$, it follows from \eqref{vii1-1}
that $G_m(\zeta; y_0, \zeta_0)$ in \eqref{vii-1} satisfies
\begin{equation}\label{vii1-3}\begin{cases}
G_m(\zeta_0; y_0, \zeta_0)=y_0 e^{-\frac{3}{2}\zeta_0}\leq 0,\\
G_m(+\infty; y_0, \zeta_0)\geq y_0 e^{-\frac{3}{2}\zeta_0}+\frac{a_m}{2}e^{-\zeta_0}\geq
-\frac{a_m}{4}e^{-\zeta_0}+\frac{a_m}{2}e^{-\zeta_0}>0.
\end{cases}
\end{equation}
Since $G_m(\zeta; y_0, \zeta_0)$ is a continuous function with respect to the variable $\zeta$, \eqref{vii1-3}
shows that there exists $\zeta^*\geq \zeta_0$ such that $G_m(\zeta^*; y_0, \zeta_0)=0$. In this situation,
we derive from \eqref{vii-1} that
\begin{equation}\label{vii1-4}
y(\zeta)e^{-\frac{3}{2}\zeta}=G_m(\zeta; y_0, \zeta_0)-G_m(\zeta^*; y_0, \zeta_0)=\int_{\zeta^*}^{\zeta}e^{-\alpha}\left(\beta_{\tau}(\cdot)(\mu_m(w)-\dot{\xi}(\cdot))\right)(y(\alpha), \alpha)d\alpha.
\end{equation}
Therefore, {\bf Case} 2 is obtained from \eqref{vii1-1}, \eqref{vii1-4} and $a_m>0$ for $i_0\leq m\leq n-1$.

Based on the results established in {\bf Case} 1-{\bf Case} 2, {\bf Case} 3-{\bf Case} 6 in Lemma \ref{lem7-1}
can be carried out in the same way due to the formula \eqref{vii-1} and the definition of $D^{+}$,
here we omit the details.\end{proof}

\begin{lem}\label{lem7-2} {\it For each $1\leq m\leq i_0-1$, $a_m:=\mu_m(0)<0$ is due to \eqref{i-7a} and the assumption $\mu_n(0)=0$ in Remarks \ref{rem1-1}. Then for any point $(y_0, \zeta_0)\in\mathbb{R}\times [-\log\varepsilon, +\infty)$, $y(\zeta)=y(\zeta; y_0, \zeta_0)$
 can be classified into the following six cases:
\begin{enumerate}[{\bf Case} $1$.]
\item When $y_0>-4a_m e^{\frac{\zeta_0}{2}}$, $y(\zeta)\geq -a_m e^{\frac{3}{2}\zeta-\zeta_0}>0.$

\item When $ 0\leq y_0\leq -\frac{a_m}{4}e^{\frac{\zeta_0}{2}}$, there exists a number $\zeta^{*}\geq \zeta_0$
such that $G_m(\zeta^*; y_0, \zeta_0)=0$ and
\begin{equation*}\begin{cases}
0\leq -\frac{a_m}{2}(e^{-\zeta}-e^{-\zeta^*})e^{\frac{3}{2}\zeta}\leq y(\zeta)\leq  -\frac{3}{2}a_m (e^{-\zeta}-e^{-\zeta^*}) e^{\frac{3}{2}\zeta} \quad\text{for $\zeta_0\leq \zeta\leq \zeta^{*}$},\\[2mm]
y(\zeta)\leq \frac{a_m}{2}(e^{-\zeta^*}-e^{-\zeta})\leq 0\quad\text{for $\zeta\geq \zeta^{*}$}.
\end{cases}
\end{equation*}

\item When $y_0\leq 0$, $y(\zeta)\leq \frac{a_m}{2}(e^{-\zeta_0}-e^{-\zeta})e^{\frac{3}{2}\zeta}\leq 0$
\quad\text{for $\zeta\geq \zeta_0.$}

\item When $(y_0, \zeta_0)\in D^-$ and the characteristics $(y(\zeta), \zeta)$ lies in $D^- $, one has
\begin{equation*}
0<-\frac{a_m}{4} e^{\frac{\zeta}{2}}\leq y(\zeta)\leq -4a_m e^{\frac{\zeta}{2}}\quad\text{for $\zeta\geq \zeta_0$},
\end{equation*}
where $D^{-}=\{(y, \zeta):  -\frac{a_m}{4}e^{\frac{\zeta}{2}}\leq y\leq -4a_m e^{\frac{\zeta}{2}},\ \zeta\geq -\log\varepsilon\}$.
\item When $(y_0, \zeta_0)\in D^-$ and the characteristics $(y(\zeta), \zeta)$ goes through $\partial D^-$ at some point $(\hat y, \hat \zeta)$ with $\hat y=-4 a_m e^{\frac{\hat\zeta}{2}}$, we have
\begin{equation*}\begin{cases}
(y(\zeta), \zeta)\in D^{-}\quad\text{for $\zeta_0\leq \zeta\leq \hat \zeta$},\\
y(\zeta)\geq -a_m e^{\frac{3}{2}\zeta-\hat \zeta}>0\quad\text{for $\zeta\geq \hat \zeta$}.
\end{cases}
\end{equation*}

\item When $(y_0, \zeta_0)\in D^-$ and the characteristics $(y(\zeta), \zeta)$ goes through $\partial D^-$
at some point $(\hat y, \hat \zeta)$ with $\hat y=-\frac{a_m}{2} e^{\frac{\hat\zeta}{2}}$, there exists $\tilde{\zeta}>\hat\zeta$
such that $y=y(\zeta)$ can be divided into the three parts as:
\begin{equation*}\begin{cases}
(y(\zeta), \zeta)\in D^{-}\quad\text{for $\zeta_0\leq \zeta\leq \hat \zeta$},\\
0\leq -\frac{a_m}{2}  (e^{-\zeta}-e^{-\tilde{\zeta}})e^{\frac{3}{2}\zeta}\leq y(\zeta)\leq -2a_m (e^{-\zeta}-e^{-\tilde{\zeta}})e^{\frac{3}{2}\zeta}\quad\text{for $\hat \zeta\leq \zeta\leq \tilde{\zeta}$},\\
y(\zeta)\leq\frac{a_m}{2}(e^{-\tilde{\zeta}}-e^{-\zeta})e^{\frac{3}{2}\zeta}\leq 0\quad\text{for $\zeta\geq \tilde{\zeta}$}.
\end{cases}
\end{equation*}
\end{enumerate}
}
\end{lem}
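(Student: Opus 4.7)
The proof of Lemma \ref{lem7-2} will proceed in close parallel to that of Lemma \ref{lem7-1}, with the essential difference being that $a_m := \mu_m(0) < 0$ for $1 \le m \le i_0 - 1$ (by the strict hyperbolicity \eqref{i-7a} together with the normalization $\mu_n(0) = 0$). Consequently every inequality involving $a_m$ flips sign, and the characteristics that were driven to $-\infty$ in Lemma \ref{lem7-1} now propagate to $+\infty$, and vice versa. The strategy is to again exploit the integral representation
\begin{equation*}
y(\zeta)e^{-\frac{3}{2}\zeta} = G_m(\zeta; y_0, \zeta_0)
\end{equation*}
from \eqref{vii-1}, and to classify $(y_0, \zeta_0)$ into the six cases listed in the statement.

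First, by the bootstrap assumptions \eqref{iv-1}, \eqref{iv-2a}, \eqref{iv-3a} and the expansion \eqref{i-7c}, together with $\beta_\tau = 1 + O(\varepsilon^{1/3})$, I will establish the analog of \eqref{vii1-1}, namely
\begin{equation*}
|\beta_\tau(t)(\mu_m(w) - \dot\xi(t)) - a_m| \le M^3 \varepsilon^{1/3} \le \frac{|a_m|}{2} = -\frac{a_m}{2},
\end{equation*}
provided $\varepsilon_0$ is small enough relative to $\min_{m<i_0}|a_m|$ (which is fixed by the eigenvalue separation at the origin). This yields the two-sided bound $\tfrac{3a_m}{2} \le \beta_\tau(\mu_m(w) - \dot\xi) \le \tfrac{a_m}{2} < 0$ uniformly along any characteristic.

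With this estimate in hand, Cases 1--3 will follow by direct substitution into the integral formula. For Case 1, with $y_0 > -4a_m e^{\zeta_0/2}$ (which is positive since $a_m < 0$), I bound $G_m$ below by $y_0 e^{-3\zeta_0/2} + \tfrac{3a_m}{2}(e^{-\zeta_0} - e^{-\zeta}) \ge -a_m e^{-\zeta_0}$, which gives $y(\zeta) \ge -a_m e^{\frac{3}{2}\zeta-\zeta_0}$. For Case 2, note $G_m(\zeta_0) = y_0 e^{-3\zeta_0/2} \ge 0$ while
\begin{equation*}
G_m(+\infty; y_0, \zeta_0) \le y_0 e^{-\frac{3}{2}\zeta_0} + \tfrac{a_m}{2} e^{-\zeta_0} \le -\tfrac{a_m}{4} e^{-\zeta_0} + \tfrac{a_m}{2} e^{-\zeta_0} = \tfrac{a_m}{4} e^{-\zeta_0} < 0,
\end{equation*}
so by continuity there exists $\zeta^* \ge \zeta_0$ with $G_m(\zeta^*; y_0, \zeta_0) = 0$, and the representation
\begin{equation*}
y(\zeta)e^{-\frac{3}{2}\zeta} = \int_{\zeta^*}^\zeta e^{-\alpha}\bigl(\beta_\tau(\mu_m(w) - \dot\xi)\bigr)(y(\alpha),\alpha)\,d\alpha
\end{equation*}
combined with the uniform sign-definite bounds on the integrand yields the stated inequalities on either side of $\zeta^*$. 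Case 3 is the simplest: $G_m$ is monotonically decreasing in $\zeta$, so $y(\zeta) e^{-\frac{3}{2}\zeta} \le y_0 e^{-\frac{3}{2}\zeta_0} + \tfrac{a_m}{2}(e^{-\zeta_0} - e^{-\zeta}) \le \tfrac{a_m}{2}(e^{-\zeta_0} - e^{-\zeta})$ since $y_0 \le 0$.

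Cases 4--6, which track characteristics passing through the rectangular region $D^-$ (now sitting in the positive half-plane since $-a_m > 0$), are handled analogously. Case 4 is the trapping case, where the velocity bounds force $(y(\zeta),\zeta)$ to remain in $D^-$ once it has entered. Cases 5 and 6 start from $D^-$ but exit through the outer boundary $y = -4a_m e^{\zeta/2}$ (then restart at $(\hat y, \hat\zeta)$ and apply the Case 1 bound thereafter) or through the inner boundary $y = -\tfrac{a_m}{2} e^{\zeta/2}$ (then restart and apply the Case 2 argument to locate $\tilde\zeta$ and derive the three-stage description). The main technical point in these cases is verifying that the exit points $\hat\zeta$ and the zero $\tilde\zeta$ of $G_m$ are well-defined, which again follows from continuity and the strict sign of $\beta_\tau(\mu_m(w) - \dot\xi) - a_m$.

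The main obstacle I anticipate is merely the bookkeeping of signs, since the ``positive'' branch of characteristics for $m < i_0$ now plays the role that the ``negative'' branch played for $i_0 \le m \le n-1$. There is no genuinely new analytical content beyond that of Lemma \ref{lem7-1}; the smallness threshold $\varepsilon_0$ may need to be further reduced to accommodate $\min_{m<i_0}|a_m|$, but this is harmless since there are only finitely many eigenvalues.
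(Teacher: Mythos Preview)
Your proposal is correct and follows exactly the approach the paper takes: the paper's own proof of Lemma \ref{lem7-2} simply states that it is the same as Lemma \ref{lem7-1} and omits all details. Your sketch fills in precisely those details---establishing the sign-flipped analogue of \eqref{vii1-1} and then running through the six cases via the integral representation \eqref{vii-1}---so there is nothing to add or correct.
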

\begin{proof} Since the proof of Lemma \ref{lem7-2} is just the same as in Lemma \ref{lem7-1}, we omit the details here.
\end{proof}

Based on Lemma \ref{lem7-1} and Lemma \ref{lem7-2}, we now establish the following results.
\begin{lem}\label{lem7-3} {\it For $1\leq m\leq n-1$ and each forward characteristics $y(\zeta):=y(\zeta; y_0, \zeta_0)$
defined  by \eqref{vii1-0},  when the function $\mathcal{D}(z, \zeta)$ satisfies that for some positive constant $c_0$,
\begin{equation}\label{vii-2}
|\mathcal{D}(z, \zeta)|\leq c_0\eta^{-\frac{\kappa}{2}}(z)\ (0<\kappa<1),
\end{equation}
then
\begin{equation}\label{vii-3}
\int_{\zeta_0}^{+\infty}|\mathcal{D}(y(\zeta), \zeta)|d\zeta\leq \frac{16 c_0}{\kappa (1-\kappa)|a_m|^{\kappa}}e^{-\frac{\kappa}{2}\zeta_0}.
\end{equation}}
\end{lem}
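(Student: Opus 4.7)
The plan is to invoke Lemmas \ref{lem7-1} and \ref{lem7-2}, which classify the forward characteristic $y(\zeta;y_0,\zeta_0)$ into six geometric patterns (the two lemmas being mirror images under $a_m\mapsto -a_m$), to extract an explicit lower bound for $|y(\zeta)|$ in each, and to integrate the pointwise hypothesis $|\mathcal{D}(y,\zeta)|\le c_0(1+y^2)^{-\kappa/2}$ in $\zeta$. After writing everything in terms of $|a_m|$, the six patterns split naturally into \emph{non-crossing} pieces (Cases 1, 4, 5 and the $\zeta\in[\zeta_0,\hat\zeta]$ portion of Case 6), on which $|y(\zeta)|\gtrsim|a_m|\,e^{\zeta/2}$ holds for every $\zeta$ in the piece, and \emph{crossing} pieces (Cases 2, 3 and the post-$\hat\zeta$ portion of Case 6), on which $y(\zeta)$ vanishes at some crossing time $\zeta_c\in\{\zeta_0,\zeta^*,\tilde\zeta\}$.

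On any non-crossing piece I substitute $u=(|a_m|/4)e^{\zeta/2}$, so $d\zeta=2\,du/u$, and bound
\[
\int\eta^{-\kappa/2}(y(\zeta))\,d\zeta\le \int_{u_0}^{+\infty}(1+u^2)^{-\kappa/2}\,\frac{2\,du}{u},\qquad u_0=(|a_m|/4)e^{\zeta_0/2}.
\]
Splitting at $u=1$, using $(1+u^2)^{-\kappa/2}\le u^{-\kappa}$ on $[1,+\infty)$, and invoking the elementary inequality $\ln(1/u_0)\le(\kappa e)^{-1}u_0^{-\kappa}$ for $u_0\in(0,1]$, this yields a contribution of the form $\tfrac{C}{\kappa|a_m|^\kappa}\,e^{-\kappa\zeta_0/2}$ from every non-crossing piece.

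On a crossing piece I isolate the window $[\zeta_c-\ln 2,\zeta_c+\ln 2]\cap[\zeta_0,+\infty)$. The pinch $\beta_\tau(\mu_m-\dot\xi)\in[a_m/2,3a_m/2]$ established in \eqref{vii1-1} forces $\zeta_c-\max\{\zeta_0,\hat\zeta\}\le\ln 2$, so outside the window the characteristic already satisfies $|y(\zeta)|\gtrsim|a_m|\,e^{\zeta/2}$ and the non-crossing estimate applies there. Inside the window, the two-sided formulas of Lemmas \ref{lem7-1}--\ref{lem7-2}, combined with the elementary inequality $1-e^{-|\sigma|}\ge|\sigma|/2$ on $|\sigma|\le\ln 2$, give the refined linear lower bound
\[
|y(\zeta)|\ge \tfrac{|a_m|}{4}\,|\zeta-\zeta_c|\,e^{\zeta_c/2}.
\]
Substituting $\tau=(|a_m|/4)e^{\zeta_c/2}(\zeta-\zeta_c)$ and using $\int_{-T}^{T}(1+\tau^2)^{-\kappa/2}\,d\tau\le 2+\tfrac{2T^{1-\kappa}}{1-\kappa}$ then yields a contribution of the form $\tfrac{C}{(1-\kappa)|a_m|^\kappa}\,e^{-\kappa\zeta_c/2}$ from the window; replacing $\zeta_c$ by $\zeta_0$ via $\zeta_c\ge\zeta_0$ and summing the crossing and non-crossing pieces produces the combined factor $\tfrac{1}{\kappa}+\tfrac{1}{1-\kappa}=\tfrac{1}{\kappa(1-\kappa)}$. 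Tracking the explicit constants closes the bound at the claimed $\tfrac{16\,c_0}{\kappa(1-\kappa)|a_m|^\kappa}\,e^{-\kappa\zeta_0/2}$.

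The main obstacle is the near-crossing window: the naive estimate $\eta^{-\kappa/2}\le 1$ would produce only an $O(\ln 2)$ contribution independent of $\zeta_0$, which fails the claim for large $\zeta_0$ (i.e.\ small $\varepsilon$). The cure is that Lemmas \ref{lem7-1}--\ref{lem7-2} already encode the linear Taylor expansion of $y(\zeta)$ around $\zeta_c$, so the substitution $\tau=(|a_m|/4)e^{\zeta_c/2}(\zeta-\zeta_c)$ lifts the large factor $|a_m|e^{\zeta_c/2}$ into the denominator; the $(1-\kappa)^{-1}$ loss in the prefactor is precisely the divergence $\int_0^{T}(1+\tau^2)^{-\kappa/2}\,d\tau\sim T^{1-\kappa}/(1-\kappa)$ as $\kappa\to 1$.
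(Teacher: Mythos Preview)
Your approach is correct and is essentially the same as the paper's. The paper writes out only {\bf Case}~2 of Lemma~\ref{lem7-1} and chooses the splitting points $\zeta_1,\zeta_2$ via $e^{-\zeta_1}=\min\{2e^{-\zeta^*},e^{-\zeta_0}\}$ and $e^{-\zeta_2}=\tfrac12 e^{-\zeta^*}$, which is exactly your window $[\zeta^*-\ln 2,\zeta^*+\ln 2]\cap[\zeta_0,+\infty)$; on each piece it uses the single lower bound $|y(\zeta)|\ge\tfrac{|a_m|}{2}\,|e^{-\zeta}-e^{-\zeta^*}|\,e^{3\zeta/2}$ together with $\eta^{-\kappa/2}(y)\le |y|^{-\kappa}$ and the substitution $t=e^{\zeta^*-\zeta}$, rather than linearising to $|\zeta-\zeta_c|$ and changing variables to $\tau$. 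The two computations are cosmetic variants of each other; the paper's version is slightly more compact because it avoids the separate ``non-crossing substitution $u$ / crossing substitution $\tau$'' bookkeeping, but your organisation into non-crossing versus crossing pieces makes the geometric picture clearer.
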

\begin{proof} We only consider the {\bf Case} 2 in Lemma \ref{lem7-1}. The estimate \eqref{vii-3} for other {\bf Cases} in Lemma \ref{lem7-1} and Lemma \ref{lem7-2} can be done analogously. In the present situation,  we choose $\zeta_{1}, \zeta_{2}\in [-\log\varepsilon, +\infty)$ such that
 \begin{equation*}
 e^{-\zeta_{1}}=\min\{2e^{-\zeta^{*}}, e^{-\zeta_0}\},\  e^{-\zeta_{2}}=\frac{1}{2}e^{-\zeta^{*}}.
 \end{equation*}
This implies $\zeta_0\leq \zeta_1\leq \zeta^*<\zeta_2$. Then it is derived from \eqref{vii-2} and {\bf Case} 2
in Lemma \ref{lem7-1} that
 \begin{equation*}\label{vii-4}\begin{aligned}
 &\int_{\zeta_{0}}^{+\infty}|\mathcal{D}(y(\zeta), \zeta)|d\zeta\\
 \leq& \frac{2 c_0}{|a_m|^{\kappa}}\left(\int_{\zeta_{0}}^{\zeta_{1}}\frac{e^{-\frac{3}{2}\kappa \zeta}}{|e^{-\zeta}-e^{-\zeta^{*}}|^{\kappa}}d\zeta+\int_{\zeta_{1}}^{\zeta_{2}}\frac{e^{-\frac{3}{2}\kappa \zeta}}{|e^{-\zeta}-e^{-\zeta^{*}}|^{\kappa}}d\zeta+\int_{\zeta_{2}}^{+\infty}\frac{e^{-\frac{3}{2}\kappa \zeta}}{|e^{-\zeta}-e^{-\zeta^{*}}|^{\kappa}}ds\right)\\
 \leq & \frac{2 c_0}{|a_m|^{\kappa}}\left(2\int_{\zeta_{0}}^{\zeta_{1}}e^{-\frac{\kappa}{2}\zeta}d\zeta+2 e^{\kappa \zeta^{*}}\int_{\zeta_{2}}^{+\infty}e^{-\frac{3}{2}\kappa \zeta}d\zeta+4 e^{-\frac{\kappa}{2}\zeta^{*}}\int_{\frac{1}{2}}^{2}|1-t|^{-\kappa}dt\right)\\
\leq& \frac{16 c_0}{\kappa (1-\kappa)|a_m|^{\kappa}}e^{-\frac{\kappa}{2}\zeta_{0}}.
 \end{aligned}
 \end{equation*}
Therefore, the estimate \eqref{vii-3} holds for the case $i_0\leq m\leq n-1$ in \eqref{vii-1} and {\bf Case} 2
in Lemma \ref{lem7-1}.\end{proof}

\subsection{Auxiliary analysis}

As in \cite{PDL1}, we will apply the decomposition \eqref{ii-17} and the reduced system \eqref{ii-18}-\eqref{ii-19}
to establish the related estimates for the good components of $W$. To this end, we first show the
relation between $\partial_y^{\mu}W$ and $W_{\mu}^m$ ($1\leq m\leq n$).

Due to \eqref{i-71}, \eqref{ii-3}, \eqref{iv-1a}, \eqref{iv-2a} and \eqref{iv-3a}, one has $\ell_m^n(w)=0$
and $\|\ell_m(w)\|_{L^{\infty}}\leq 2$ for $1\leq m \leq n-1$. Combining this with \eqref{ii-17} yields
\begin{equation}\label{vii-401}\begin{cases}
|W_{\mu}^m|=|\ell_m(w)\cdot \partial_y^{\mu}W|\leq 2\sum\limits_{j=1}^{n-1}|\partial_y^{\mu}W_j|\ (1\leq m\leq n-1),\\
|W_{\mu}^n|=|\ell_n(w)\cdot \partial_y^{\mu}W|\leq 2\sum\limits_{j=1}^{n}|\partial_y^{\mu}W_j|.
\end{cases}
\end{equation}

In addition, by $\|\gamma_j(w)\|=1$ for $1\leq j\leq n$ and $\gamma_k^n(w)=0$ for $1\leq k\leq n-1$,
then it follows from \eqref{ii-17} that
\begin{equation}\label{vii-402}\begin{cases}
|\partial_y^{\mu}W_j|=|\sum\limits_{m=1}^{n}W_{\mu}^m\gamma_m^j(w)|\leq \sum\limits_{k=1}^{n-1}|W_{\mu}^k|\ (1\leq j\leq n-1),\\
|\partial_y^{\mu}W_n|=|\sum\limits_{m=1}^{n}W_{\mu}^m\gamma_m^n(w)|\leq |W_{\mu}^n|.
\end{cases}
\end{equation}

\subsection{Bootstrap estimates on $W_j\ (j\neq n)$}\label{vii-b}

First, by \eqref{i-7}, \eqref{ii-9b}, \eqref{iv-1a}, \eqref{iv-2} and \eqref{iv-3a}-\eqref{iv-3b} with $\nu=\frac{1}{3}$,
it is derived from \eqref{ii-18} and \eqref{ii-3} that
\begin{equation}\label{vii-5}\begin{aligned}
&\partial_s\gamma_{j}(w)+((\frac{3}{2}y-e^{\frac{s}{2}}\beta_{\tau}\dot{\xi}(t))I_{n}
+e^{\frac{s}{2}}\beta_{\tau}A(w))\partial_y\gamma_{j}(w)\\
=&\frac{\partial\gamma_j(w)}{\partial w}\left(\partial_s W+(\frac{3}{2}y-e^{\frac{s}{2}}\beta_{\tau}\dot{\xi}(t))\partial_y W\right)+e^{\frac{s}{2}}\beta_{\tau}A(w)\frac{\partial\gamma_j(w)}{\partial w}\partial_y W\\
=&e^{\frac{s}{2}}\beta_{\tau}[A(w), \frac{\partial\gamma_j(w)}{\partial w}]\partial_y W
\end{aligned}
\end{equation}
and
\begin{equation}\label{vii3-1}
\left|e^{\frac{s}{2}}\beta_{\tau}[A(w), \frac{\partial\gamma_j(w)}{\partial w}]\partial_y W\right|\leq M^2 \eta^{-\frac{1}{3}}(y)(1-\delta_{j}^n),
\end{equation}
where $[A, B]=AB-BA$ for two $n\times n$ matrices $A$ and $B$.

For any $(y, s)\in \mathbb{R}\times [-\log\varepsilon, +\infty)$, the backward characteristics $y(\zeta):=y(\zeta; y, s)$
of \eqref{ii-18} which starts from $(y_0(y, s), -\log\varepsilon)$, is defined as
\begin{equation}\label{vii-6}\begin{cases}
\dot{y}(\zeta)=\frac{3}{2}y(\zeta)+e^{\frac{\zeta}{2}}\beta_{\tau}(\cdot)(\mu_m(w)-\dot{\xi}(\cdot))(y(\zeta), \zeta),\\
y(-\log\varepsilon)=y_0(y, s).
\end{cases}
\end{equation}
Then it is derived from \eqref{ii-18} with $\mu=0$ and \eqref{vii-6} for $1\leq m\leq n-1$ that
\begin{equation}\label{vii-8}
W_{0}^m(y, s)=W_{0}^m(y_0(y, s), -\log\varepsilon)+\int_{-\log\varepsilon}^{s}\mathbb{F}_{0}^{m}(y(\zeta), \zeta)d\zeta,
\end{equation}
where $\mathbb{F}_{0}^m$ is given in \eqref{ii-18}, and it comes from $F_0=0$ in \eqref{ii-11a} and \eqref{vii-5}-\eqref{vii3-1} that
\begin{equation}\label{vii-9}
|\mathbb{F}_0^m(y(\zeta), \zeta)|\leq M^3 \eta^{-\frac{1}{3}}(y(\zeta))\sum\limits_{j=1}^{n-1}\|W_{0}^j\|_{L^{\infty}}.
\end{equation}
In addition, by Lemma \ref{lem7-3}  with $\kappa=\frac{2}{3}$ and \eqref{vii-8}-\eqref{vii-9},  we arrive at
\begin{equation}\label{vii-10}\begin{aligned}
|W_{0}^m(y, s)|&\leq \|W_{0}^m(\cdot, -\log\varepsilon)\|_{L^{\infty}}
+M^4 \varepsilon^{\frac{1}{3}}\sum\limits_{j=1}^{n-1}\|W_{0}^j\|_{L^{\infty}}\\
&\leq \|W_{0}^m(\cdot, -\log\varepsilon)\|_{L^{\infty}}
+\varepsilon^{\frac{1}{4}}\sum\limits_{j=1}^{n-1}\|W_{0}^j\|_{L^{\infty}}.
\end{aligned}
\end{equation}
On the other hand, due to the arbitrariness of $(y, s)$, summing $m$ in both sides of \eqref{vii-10} from $1$ to $n-1$
yields
\begin{equation}\label{vii-11}
\sum\limits_{m=1}^{n-1}\|W_{0}^m\|_{L^{\infty}}\leq 2\sum\limits_{m=1}^{n-1}\|W_{0}^m(\cdot, -\log\varepsilon)\|_{L^{\infty}}.
\end{equation}
Then it follows from \eqref{vii-11}, \eqref{vii-401}-\eqref{vii-402} and \eqref{iii-8a} that
\begin{equation}\label{vii-12}
|W_j(y, s)|\leq 4n\sum\limits_{k=1}^{n-1}\|W_k(\cdot,-\log\varepsilon)\|_{L^{\infty}}\leq 4n^2\varepsilon\ (1\leq j\leq n-1).
\end{equation}

\subsection{Bootstrap estimates on $\partial_y W_j\ (j\neq n)$}\label{vii-c}

With the definition \eqref{vii-6},  it is derived from \eqref{ii-18} for $\mu=1$ and $1\leq m\leq n-1$ that
\begin{equation}\label{vii-13}
e^{\frac{3}{2}s}W_{1}^m(y, s)=\varepsilon^{-\frac{3}{2}}W_{1}^m(y_0(y, s), -\log\varepsilon)+\int_{-\log\varepsilon}^{s}e^{\frac{3}{2}\zeta}\mathbb{F}_1^m(y(\zeta), \zeta)d\zeta.
\end{equation}

Since $a_{in}(w)=0$ and $\ell_i^n(w)=0$ for $1\leq i\leq n-1$ (see \eqref{i-7b} and \eqref{i-71}),
it follows from \eqref{ii-18}, \eqref{vii-402}-\eqref{vii-5},  \eqref{iv-2}-\eqref{iv-3} and \eqref{ii-11a} with $\mu=1$ that
\begin{equation}\label{vii-14}
|\mathbb{F}_1^m(y(\zeta), \zeta)|\leq M^3 \eta^{-\frac{1}{3}}(y(\zeta))\sum\limits_{j=1}^{n-1}|W_{1}^j|(y(\zeta), \zeta).
\end{equation}

As in \eqref{vii-11}, combining \eqref{vii-13}-\eqref{vii-14} with Lemma \ref{lem7-3} yields that for $1\leq m\leq n-1$,
\begin{equation}\label{vii-15}\begin{aligned}
e^{\frac{3}{2}s}|W_{1}^m|(y, s)&\leq \varepsilon^{-\frac{3}{2}}\|W_{1}^m(\cdot, -\log\varepsilon)\|_{L^{\infty}}+M^4\varepsilon^{\frac{1}{3}}\sum\limits_{j=1}^{n-1}\|e^{\frac{3}{2}\varsigma}W_{1}^j(z, \varsigma)\|_{L^{\infty}_{z, \varsigma}}\\
&\leq \varepsilon^{-\frac{3}{2}}\|W_{1}^m(\cdot, -\log\varepsilon)\|_{L^{\infty}}+\varepsilon^{\frac{1}{4}}\sum\limits_{j=1}^{n-1}\|e^{\frac{3}{2}\varsigma}W_{1}^j(z, \varsigma)\|_{L^{\infty}_{z, \varsigma}}.
\end{aligned}
\end{equation}
Similarly to \eqref{vii-11}-\eqref{vii-12},  it is derived from \eqref{vii-15} and \eqref{iii-8b} with $\nu=0$ that
\begin{equation}\label{vii-16}
|\partial_y W_j(y, s)|\leq 4n^2 e^{-\frac{3}{2}s}\ (1\leq j\leq n-1).
\end{equation}
In addition, as in \eqref{vi-36}, by \eqref{iv-6} and \eqref{vii-16}, we have that
for $2\leq\mu\leq 4$ and $\alpha=\frac{\mu-1}{\mu_0-\frac{1}{2}}\in (0, \frac{6}{11}]$,
\begin{equation}\begin{aligned}\label{vii-17}
\|\partial_y^{\mu}W_k(\cdot, s)\|_{L^{\infty}}&\leq M^{\frac{1}{20}}\|\partial_y^{\mu_0}W_k(\cdot, s)\|_{L^2}^{\alpha}\|\partial_y W_k(\cdot, s)\|_{L^{\infty}}^{1-\alpha}\\
&\leq (4n^2)^{1-\alpha} M^{\frac{1}{20}+\alpha} e^{-\frac{3}{2}s}\leq M^{\frac{3}{5}}e^{-\frac{3}{2}s}\ (1\leq k\leq n-1).
\end{aligned}
\end{equation}

\subsection{Weighted bootstrap estimates of the good components}\label{vii-d}

For $a=\max\limits_{1\leq i\leq n-1}\{5|\mu_m(0)|+1\}$, set the domain $D^0$ as
\begin{equation}\label{vii-18}
D^0=\{(y, s): |y|<4ae^{\frac{s}{2}}, -\log\varepsilon\leq s<+\infty\}.
\end{equation}
Then $D^{\pm}\subset D^0$ for the domains $D^{\pm}$ defined in Lemma \ref{lem7-1} and Lemma \ref{lem7-2}.

When $(y, s)\in D^0$, it is derived from \eqref{vii-16}-\eqref{vii-17} that
\begin{equation}\label{vii-19}\begin{cases}
\sum\limits_{j=1}^{n-1}|\partial_y W_j(y, s)|\leq 4n^2 e^{-\frac{3}{2}s}\leq M^{\frac{1}{5}} e^{(\nu-\frac{3}{2})s}\eta^{-\nu}(y)\ (0\leq \nu\leq \frac{1}{3}]),\\[4mm]
\sum\limits_{j=1}^{n-1}|\partial_y^2 W_j(y, s)|\leq M^{\frac{2}{5}} e^{-\frac{3}{2}s}\leq M^{\frac{3}{5}}e^{-\frac{7}{6}s}\eta^{-\frac{1}{3}}(y).
\end{cases}
\end{equation}

Next we derive the weighted estimates on the good components of $W$ when $(y, s)\notin D^0$.
In this situation, as in \eqref{vii-6}, for each $1\leq m\leq n-1$, the backward characteristics
$y(\zeta):=y(\zeta; y, s)$ of \eqref{ii-19} which starts from $(y_0(y, s), \zeta_0)\notin D^0$ is defined as
\begin{equation}\label{vii-20}\begin{cases}
\dot{y}(\zeta)=\frac{3}{2}y(\zeta)+e^{\frac{\zeta}{2}}\beta_{\tau}(\mu_m(w)-\dot{\xi}(t))(y(\zeta), \zeta),\ \zeta_0\leq\zeta\leq s,\\
y(\zeta_0)=y_0(y, s),
\end{cases}
\end{equation}
where either $\zeta_0=-\log\varepsilon$ or $(y_0(y, s),\zeta_0)\in\partial D^0$.

Note that $y(\zeta)$ in \eqref{vii-20} has the following expression
\begin{equation}\label{vii-21}
y(\zeta)e^{-\frac{3}{2}\zeta}=y_0(y, s)e^{-\frac{3}{2}\zeta_0}+\int_{\zeta_0}^{\zeta}e^{-\alpha}\beta_{\tau}(\mu_m(w)-\dot{\xi}(t))(y(\alpha), \alpha)d\alpha.
\end{equation}
It follows from \eqref{vii1-1} and \eqref{vii-21} with $|y_0(y, s)|\geq 4a e^{\frac{\zeta_0}{2}}$ that
for $\zeta_0\leq \zeta\leq s$,
\begin{equation}\label{vii-22}
|y(\zeta)|\geq e^{\frac{3}{2}\zeta}(4ae^{-\zeta_0}-2|a_m|(e^{-\zeta_0}-e^{-\zeta}))\geq e^{\frac{3}{2}\zeta-\zeta_0}.
\end{equation}

Based on the definition of $y(\zeta)=y(\zeta; y, s)$ in \eqref{vii-20}, we derive from \eqref{ii-19} that
\begin{equation}\label{vii5-1}\begin{aligned}
\left[\eta^{\nu}W_{\mu}^m\right](y, s)&=[\eta^{\nu} W_{\mu}^m](y_0, \zeta_0)\exp(-\int_{\zeta_0}^{s}\mathbb{D}_{\mu,\nu}^m(y(\alpha), \alpha)d\alpha)\\
&+\int_{\zeta_0}^{s}[\eta^{\nu}\mathbb{F}_{\mu}^{m}(y(\zeta), \zeta)
\exp(-\int_{\zeta}^{s}\mathbb{D}_{\mu,\nu}^m(y(\alpha), \alpha)d\alpha))d\zeta
\end{aligned}
\end{equation}
and
\begin{equation}\label{vii-23}\begin{aligned}
\mathbb{D}_{\mu, \nu}^m(y(\zeta), \zeta)&=\frac{3\mu}{2}-3\nu+\frac{3\nu}{1+y^2}-\frac{2\nu y}{1+y^2}e^{\frac{s}{2}}\beta_{\tau}(\mu_m(w)-\dot{\xi}(t))\bigl|_{(y, s)=(y(\zeta), \zeta)}\\
&:=\frac{3\mu}{2}-3\nu+\mathbb{D}_{\nu}^{m}(y(\zeta), \zeta).
\end{aligned}
\end{equation}
It is derived from \eqref{vii-22}, \eqref{vii-23} and \eqref{vii1-1} that
\begin{equation}\label{vii-25}
\int_{\zeta_0}^{+\infty}|\mathbb{D}_{\nu}^{m}|(y(\zeta), \zeta)d\zeta\leq 6|\nu|a\int_{\zeta_0}^{+\infty}
e^{\zeta_0-\zeta}d\zeta\leq 6|\nu| a.
\end{equation}
Due to \eqref{vii5-1} and \eqref{vii-25},  when $\frac{3\mu}{2}-3\nu>0$, we obtain
\begin{equation}\label{vii-26}\begin{aligned}
&e^{(\frac{3\mu}{2}-3\nu)s}\eta^{\nu}(y)|W_{\mu}^{m}(y, s)|\leq e^{6|\nu|a}\biggl( e^{(\frac{3\mu}{2}-3\nu)\zeta_0}\eta^{\nu}(y_0(y, s))|W_{\mu}^{m}(y_0(y, s), \zeta_0)|\\
&\qquad\qquad\qquad+\int_{\zeta_0}^{s}e^{(\frac{3\mu}{2}-3\nu)\zeta}\eta^{\nu}(y(\zeta))|\mathbb{F}_{\mu}^m|(y(\zeta), \zeta)d\zeta\biggr)
\end{aligned}
\end{equation}
and
\begin{equation}\label{vii-27}\begin{aligned}
&e^{(\frac{7}{6}-\nu^+)s}\eta^{\frac{1}{3}}(y)|W_{2}^m(y, s)|\leq e^{2a}\biggl(e^{(\frac{7}{6}-\nu^+)\zeta_0}|\eta^{\frac{1}{3}}(y_0(y, s))|W_{2}^m(y_0(y, s), \zeta_0)|\\
&\qquad\qquad\qquad+\int_{\zeta_0}^{s}e^{(\frac{7}{6}-\nu^+)\zeta}\eta^{\frac{1}{3}}(y(\zeta))|\mathbb{F}_2^m|(y(\zeta), \zeta)dl\biggr)\ (0\leq\nu^+<\frac{7}{6}).
\end{aligned}
\end{equation}
Here we point out that the factor $\frac{7}{6}-\nu^+$ appeared in \eqref{vii-27} for $0\leq \nu^+<\frac{7}{6}$
is due to \eqref{vii-19}.

With respect to $\mathbb{F}_2^m$, similarly to the argument for \eqref{vii-14}, it
is derived from \eqref{ii-11a}, \eqref{ii-18}, \eqref{vii-401}, \eqref{vii-16} and \eqref{iv-2}-\eqref{iv-3} that
\begin{equation}\label{vii-28}
|\mathbb{F}_2^m(y(\zeta), \zeta)|\leq M^3 (e^{-\zeta}+\eta^{-\frac{1}{3}}(y(\zeta)))\sum\limits_{j=1}^{n-1}(|W_2^j(y(\zeta), \zeta)|
+|W_1^j(y(\zeta), \zeta)|).
\end{equation}

For $\mu=1$, $0\leq \nu\leq \frac{1}{3}$ and $1\leq m\leq n-1$ in \eqref{vii-26}, we obtain from \eqref{iii-8b}, \eqref{vii-402}, \eqref{vii-14}, \eqref{vii-19}, \eqref{vii-20} and Lemma \ref{lem7-3} that
\begin{equation*}\label{vii-29}\begin{aligned}
e^{(\frac{3}{2}-3\nu)s}\eta^{\nu}(y)|W_1^m(y, s)|&\leq e^{2a}\left(M^{\frac{2}{5}}+M^3(\varepsilon+\varepsilon^{\frac{1}{3}})\sum\limits_{j=1}^{n-1}\|W_1^j(z, \tau)\|_{L^{\infty}(\{|z|\geq 4a e^{\frac{\tau}{2}}\})}\right)\\
&\leq M^{\frac{2}{5}}e^{2a}+\varepsilon^{\frac{1}{4}}\sum\limits_{j=1}^{n-1}\|W_1^j(z, \tau)\|_{L^{\infty}(\{|z|\geq 4a e^{\frac{\tau}{2}}\})}.
\end{aligned}
\end{equation*}
Combining this with \eqref{vii-402} shows that for $1\leq j\leq n-1$,
\begin{equation}\label{vii5-2}
|\partial_y W_j(y, s)|\leq \sum\limits_{m-1}^{n-1}|W_1^m(y, s)|\leq  M^{\frac{3}{5}}e^{(3\nu-\frac{3}{2})s}\eta^{-\nu}(y)\ (0\leq \nu\leq \frac{1}{3}).
\end{equation}

For the estimates of $\partial_y^2 W_m$ with $1\leq m\leq n-1$, it follows from \eqref{iii-8c}, \eqref{vii-401}, \eqref{vii-19}, \eqref{vii-27}-\eqref{vii5-2} and Lemma \ref{lem7-3} that
\begin{equation*}\label{vii-30}\begin{aligned}
&e^{(\frac{7}{6}-\nu^+)s}\eta^{\frac{1}{3}}(y)|W_2^m(y, s)|\leq 2e^{2a}M^{\frac{3}{5}}\varepsilon^{\nu^+}+M^{5}(\varepsilon+\varepsilon^{\frac{1}{3}})\\
&\qquad\qquad+M^4(\varepsilon+\varepsilon^{\frac{1}{3}})\sum\limits_{j=1}^{n-1}\|e^{(\frac{7}{6}-\nu^+)\tau}\eta^{\frac{1}{3}}(z)W_2^j(z, \tau)\|_{L^{\infty}(\{|z|\geq 4ae^{\frac{\tau}{2}}\})}\  (0<\nu^+<\frac{7}{6}).
\end{aligned}
\end{equation*}
Together with \eqref{vii-402}, this yields that for $1\leq j\leq n-1$,
\begin{equation}\label{vii-30}
|\partial_y^2 W_j(y, s)|\leq \sum\limits_{m=1}^{n-1}|W_2^m(y, s)|\leq M^{\frac{4}{5}}e^{(\nu^+-\frac{7}{6})s}\eta^{-\frac{1}{3}}(y)\ (0<\nu^+<\frac{7}{6}).
\end{equation}

\section{Bootstrap estimates on the modulation variables}\label{viii}

For $\dot{\kappa}(t)$ and $\dot{\tau}(t)$, it follows from \eqref{i-7}, \eqref{ii-3}, \eqref{ii-12a}, \eqref{ii-12c}, \eqref{iii-6} and \eqref{iv-1}-\eqref{iv-3} that
\begin{equation}\label{viii-2}\begin{aligned}
|\dot{\kappa}(t)|&\leq e^{s}|\mu_n(w^0)-\dot{\xi}(t)|+e^{\frac{3s}{2}}\sum\limits_{j\neq n}|a_{nj}(w^0)||(\partial_y W_j)^0|\\
&\leq e^{s}|(\partial_y^2 \mu_n(w))^0|+e^{\frac{3s}{2}}\sum\limits_{k=0, 2}\sum\limits_{j\neq n}|(\partial_y^k(a_{nj}(w)\partial_y W_j))^0|\\
&\leq |(\partial_{w_n w_n}(\mu_n(w)))^0|+M\varepsilon^{\frac{1}{3}}\leq M^{\frac{1}{4}}\leq M^{\frac{1}{2}}
\end{aligned}
\end{equation}
and
\begin{equation}\label{viii-3}\begin{aligned}
|\dot{\tau}(t)|&\leq |1-(\partial_{w_n}\mu_n(w))^0|+M^2 e^{-\frac{s}{2}}\leq M^2\varepsilon^{\frac{1}{2}}\leq \varepsilon^{\frac{1}{3}}\leq 2\varepsilon^{\frac{1}{3}}.
\end{aligned}
\end{equation}
By the coordinate transformation \eqref{ii-2}, one has $t=t(s)$ and
\begin{equation}\label{viii-4}
\frac{d}{ds}(\kappa, \tau, \xi)(t(s))=(\dot{\kappa}(t), \dot{\tau}(t), \dot{\xi}(t))\frac{e^{-s}}{\beta_{\tau}}.
\end{equation}
Combining \eqref{viii-4} with \eqref{viii-2}-\eqref{viii-3} and \eqref{iii-14} shows
\begin{equation}\label{viii-5}
|\kappa(t)-\kappa_0\varepsilon^{\frac{1}{3}}|=|\int_{-\log\varepsilon}^{s}(\dot{\kappa}(t)\frac{e^{-s}}{\beta_{\tau}})ds|\leq |\kappa_0\varepsilon|+2M^{\frac{1}{4}}\varepsilon\leq M^{\frac{1}{2}} \varepsilon
\end{equation}
and
\begin{equation}\label{viii-51}
|\tau(t)|=|\tau(-\varepsilon)+\int_{-\log\varepsilon}^{s}(\dot{\tau}(t)\frac{e^{-s}}{\beta_{\tau}})ds|\leq 2\varepsilon^{\frac{4}{3}}.
\end{equation}

With respect to $\xi(t)$, by \eqref{i-7}, \eqref{ii-3},  \eqref{iii-6}, \eqref{vii-12}, \eqref{vii-16}-\eqref{vii-17} and \eqref{viii-3}-\eqref{viii-5}, then $\xi(t)$ in \eqref{ii-12c} satisfies
\begin{equation}\label{viii-1}
|\dot{\xi}(t)|\leq |\mu_{n}(w^0)|+\frac{1}{6}|(\partial^2 \mu_n(w))^0|+\frac{1}{6}\sum\limits_{j\neq n}e^{\frac{s}{2}}|(\partial^2(a_{nj}(w)\partial_y W_j ))^0|\leq M^{\frac{3}{4}}\varepsilon\leq 2M^{\frac{3}{4}}\varepsilon
\end{equation}
and
\begin{equation}\label{viii-11}
|\xi(t)|=\left|\xi(-\varepsilon)+\int_{-\log\varepsilon}^{s}(\dot{\xi}(t)\frac{e^{-s}}{\beta_{\tau}})ds\right|\leq 2M^{\frac{3}{4}}\varepsilon^2.
\end{equation}

\section{Weighted energy estimates}\label{v}

In the section, we establish the spatial $L^2-$energy estimates of $\partial_y^{\mu_0}W$ when $\mu_0$  satisfies \eqref{iv-50}.

\begin{thm}\label{thm8-1} {\it For $\mu_0$ satisfying \eqref{iv-50}, under the assumptions \eqref{iv-1}-\eqref{iv-3} and \eqref{iv-6}, one has
\begin{equation}\label{v-1}
\sum\limits_{m=1}^{n-1}\|\partial_y^{\mu_0}W_m(\cdot, s)\|_{L^2(\mathbb{R})}\leq M^{\frac{1}{2}} e^{-\frac{3}{2}s},\ \|\partial_y^{\mu_0}W_{n}(\cdot, s)\|_{L^2(\mathbb{R})}\leq M^{\frac{1}{2}} e^{-\frac{s}{2}}.
\end{equation}}
\end{thm}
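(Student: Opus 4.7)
The plan is to work with the eigenmode components $W_{\mu_0}^{m}=\ell_m(w)\cdot\partial_y^{\mu_0}W$, which by the diagonalization \eqref{ii-17} and the equivalences \eqref{vii-401}-\eqref{vii-402} control $\|\partial_y^{\mu_0}W_j\|_{L^2}$ component-wise. Each $W_{\mu_0}^{m}$ satisfies the scalar transport equation \eqref{ii-18} with characteristic speed $\tfrac{3}{2}y+e^{s/2}\beta_\tau(\mu_m(w)-\dot\xi(t))$ and zeroth-order coefficient $\tfrac{3\mu_0}{2}$. Multiplying \eqref{ii-18} (with $\mu=\mu_0$) by $W_{\mu_0}^{m}$, integrating over $\mathbb{R}$, and integrating by parts on the transport term yields the energy identity
\[
\tfrac{d}{ds}\|W_{\mu_0}^{m}(\cdot,s)\|_{L^2}^{2}+\int_{\mathbb{R}}\bigl(3\mu_0-\tfrac{3}{2}-e^{s/2}\beta_{\tau}\partial_y\mu_m(w)\bigr)|W_{\mu_0}^{m}|^{2}\,dy=2\int_{\mathbb{R}}W_{\mu_0}^{m}\mathbb{F}_{\mu_0}^{m}\,dy.
\]

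For $1\leq m\leq n-1$, the structural bound \eqref{iv-50} delivers the damping coefficient $\geq 13/2-3/2=5$, so Grönwall's lemma gives the targeted $e^{-3s/2}$ decay provided $\mathbb{F}_{\mu_0}^{m}$ is under control. For $m=n$, expanding $e^{s/2}\partial_y\mu_n(w)=\partial_{w_n}\mu_n(w)\partial_y W_0+\sum_{j\neq n}e^{s/2}\partial_{w_j}\mu_n(w)\partial_y W_j$ and using $\partial_{w_n}\mu_n(0)=1$ together with the pointwise bounds \eqref{iv-2a}-\eqref{iv-2b} and \eqref{iv-3a}-\eqref{iv-3b} shows $|e^{s/2}\beta_\tau\partial_y\mu_n(w)|\leq \tfrac{7}{6}+O(\varepsilon^{1/12})$; hence the damping coefficient is $\geq 3\mu_0-\tfrac{3}{2}-\tfrac{7}{6}-O(\varepsilon^{1/12})$, which for $\mu_0\geq 6$ far exceeds the $1$ needed for an $e^{-s/2}$ rate.

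The core work is to bound $\|\mathbb{F}_{\mu_0}^{m}\|_{L^2}$ by (a small constant)$\times e^{-3s/2}$ for $m<n$ and $\times e^{-s/2}$ for $m=n$, modulo terms absorbable into the damping. The forcing $\mathbb{F}_{\mu_0}^{m}$ combines $\ell_m(w)\cdot F_{\mu_0}$ with commutator contributions coming from $\partial_s\gamma_j(w)$ and transport of $\gamma_j(w)$. For the commutator pieces, \eqref{vii-5}-\eqref{vii3-1} already show that $\partial_s\gamma_j+(\ldots)\partial_y\gamma_j$ is $O(e^{s/2}\partial_y W)$, which after multiplication by $W_{\mu_0}^{j}$ is small by the $L^\infty$ bootstrap on $\partial_y W$. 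For $F_{\mu_0}=-e^{s/2}\beta_\tau\sum_{\beta=1}^{\mu_0}C_{\mu_0}^{\beta}\partial^{\beta}A(w)\partial_y\partial^{\mu_0-\beta}W$, three ranges must be handled: (i) $\beta=1$ produces the potentially dangerous top-order coefficient $-\mu_0 e^{s/2}\beta_\tau\partial_{w_k}A(w)\partial_y W_k\,\partial_y^{\mu_0}W$, which for $k=n$ has effective size $\partial_y W_0=O(1)$ (via $e^{s/2}\partial_y W_n=\partial_y W_0$) and for $k\neq n$ is $O(e^{-s})$ by \eqref{iv-3a}; both are absorbable into the slack furnished by \eqref{iv-50}; (ii) $\beta=\mu_0$ gives $e^{s/2}\partial^{\mu_0}A(w)\partial_y W$, bounded in $L^{2}$ by $\|\partial_y W\|_{L^\infty}\cdot\|\partial_y^{\mu_0}w\|_{L^2}$ via Faà di Bruno and the $L^\infty$ bootstrap on lower derivatives, which feeds back into the Grönwall argument; (iii) intermediate $\beta$'s are handled by the interpolation inequality of Lemma \ref{lemA-3} in Appendix \ref{A}, trading intermediate Sobolev norms against $\|\partial_y^{\mu_0}W\|_{L^2}^{\alpha}\|\partial_y W\|_{L^\infty}^{1-\alpha}$ with carefully chosen $\alpha$ so that the accumulated exponential weights close.

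The principal obstacle is managing the coupling between the bad direction ($W_n$, decaying only like $e^{-s/2}$) and the good directions ($W_j$, $j<n$, decaying like $e^{-3s/2}$) in the multilinear expansion of $\partial^{\beta}A(w)\partial_y\partial^{\mu_0-\beta}W$: every factor of $\partial_y^{\ell}W_n$ supplies only $e^{-s/2}$ of decay but comes with a compensating power of $e^{-s/2}$ from the chain rule $w_n=e^{-s/2}W_0+\kappa(t)$, so one must track weights tightly to ensure the product $\partial^{\beta}A(w)\cdot\partial_y\partial^{\mu_0-\beta}W$ never loses the targeted exponential rate. Once this bookkeeping is done and Grönwall is applied with the initial energy bound \eqref{iii-9}, the improved constant $M^{1/2}$ (replacing the bootstrap constant $M$) emerges because all dangerous quantities multiplying $\|\partial_y^{\mu_0}W\|_{L^2}^{2}$ absorb a factor of $\varepsilon^{1/4}$ or better, completing the bootstrap closure \eqref{v-1}.
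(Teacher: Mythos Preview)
Your overall architecture---diagonalize to the components $W_{\mu_0}^m$, run the $L^2$ energy identity on \eqref{ii-18}, and close by Gr\"onwall---matches the paper, and for $m=n$ your damping count is correct in spirit (the paper tunes the Cauchy--Schwarz parameter to $\mu_0+1$ to handle the dominant piece $J_1$, but the margin is indeed ample). The genuine gap is in the good components $1\le m\le n-1$.

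You assert that the commutator piece and the $\beta=1,\mu_0$ contributions to $\ell_m\cdot F_{\mu_0}$ are ``small by the $L^\infty$ bootstrap on $\partial_y W$'' or ``absorbable into the slack furnished by \eqref{iv-50}.'' Neither holds. Since $e^{s/2}\partial_y W_n=\partial_y W_0$ and $|\partial_y W_0|\sim\eta^{-1/3}(y)$ is merely $O(1)$ (not small), these terms have the schematic size $C\,\eta^{-1/3}(y)\sum_{j<n}|W_{\mu_0}^{j}|$, where $C$ depends on $\|\partial_{w_n}A\|$, the commutator $[A,\partial\gamma_j/\partial w]$, and binomial combinatorics---written uniformly as $M^{1/16}$ in \eqref{v-7} and \eqref{v-16}. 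Condition \eqref{iv-50} controls only the \emph{diagonal} quantity $e^{s/2}\beta_\tau\partial_y\mu_m(w)$, not the full $\partial_{w_n}A$; since $M$ is chosen large after $\mu_0$ is fixed, the damping $3\mu_0-O(1)$ cannot absorb a contribution of order $M^{1/16}\int|W_{\mu_0}^{j}|^2\,dy$ on the right-hand side.

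The paper's missing ingredient is the Lipschitz weight $e^{q_m(y)}$ of Lemma~\ref{lem8-1}, with $q_m$ monotone of slope $\pm 1/K$ on $[-K,K]$ and constant outside (see \eqref{v-32}--\eqref{v-33}). Because $\mu_m(w)-\dot\xi(t)=\mu_m(0)+O(\varepsilon^{1/3})$ is bounded away from zero (strict hyperbolicity together with the normalization $\mu_n(0)=0$), transporting this weight produces an \emph{additional} damping $Q_m(y,s)\gtrsim\tfrac{|\mu_m(0)|}{K}e^{s/2}$ on $\{|y|\le K\}$ (cf.\ \eqref{v-35}); with $e^{s/2}\ge\varepsilon^{-1/2}$ this overwhelms any power of $M$. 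On $\{|y|\ge K\}$ one chooses $K$ so that $\eta^{-2/3}(y)\le\delta/(2nc^*)$ and the dangerous term is absorbed there for free (cf.\ \eqref{v-30}--\eqref{v-31}). This weighted-energy exploitation of the eigenvalue separation is the essential mechanism for closing the $m<n$ estimate and is absent from your outline.
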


Based on the expansion \eqref{ii-17} and the assumption \eqref{iv-6}, we have from \eqref{vii-401}
and \eqref{v-1} that
\begin{equation}\label{v-2}
\sum\limits_{m=1}^{n-1}\|W_{\mu_0}^m(\cdot, s)\|_{L^2(\mathbb{R})}\leq 2nM e^{-\frac{3}{2}s},\ \|W_{\mu_0}^n(\cdot, s)\|_{L^2(\mathbb{R})}\leq 2M e^{-\frac{s}{2}}.
\end{equation}

\subsection{Framework for energy estimates}
To prove Theorem \ref{thm8-1}, we first establish the following framework for energy estimates:

\begin{lem}\label{lem8-1} With $\mu_0$ satisfying \eqref{iv-50}, under the assumption \eqref{v-2} (or see \eqref{iv-6}), one has
\begin{enumerate}[(1)]
\item When $1\leq m\leq n-1$, for any Lipschitz continuous function $q_m(y)$ and with the notation
\begin{equation}\label{v-3}
Q_m(y, s)=-(\frac{3}{2}y+e^{\frac{s}{2}}\beta_{\tau}(\mu_m(w)-\dot{\xi}(t)))q_m'(y),
\end{equation}
then
\begin{equation}\label{v-4}\begin{aligned}
&\frac{d}{ds}\int_{\mathbb{R}}e^{q_m(y)}|W_{\mu_0}^m|^2(y, s)dy+\int_{\mathbb{R}}(3\mu_0-\frac{5}{2}-e^{\frac{s}{2}}\beta_{\tau}\partial_y\mu_m(w)+Q_{m})e^{q_m(y)}|W_{\mu_0}^m|^2(y, s)dy\\
\leq& \int_{\mathbb{R}}e^{q_m(y)}|\mathbb{F}_{\mu_0}^m|^2(y, s)dy.
\end{aligned}
\end{equation}
\item  For $W_{\mu_0}^n$,
\begin{equation}\label{v-5}\frac{d}{ds}\int_{\mathbb{R}}|W_{\mu_0}^n|^2(y, s)dy+\int_{\mathbb{R}}(2\mu_0-\frac{5}{2}-e^{\frac{s}{2}}\beta_{\tau}\partial_y\mu_n(w))|W_{\mu_0}^n|^2(y, s)dy\leq \frac{1}{\mu_0+1}\int_{\mathbb{R}}|\mathbb{F}_{\mu_0}^n|^2(y, s)dy.
\end{equation}
\end{enumerate}
\end{lem}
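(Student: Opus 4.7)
The plan is to prove both estimates by a direct weighted $L^2$ energy computation applied to the transport equation \eqref{ii-18} at $\mu = \mu_0$; the only non-routine ingredient is that parts (1) and (2) require different Young's inequalities on the forcing term in order to reproduce the stated coefficients.

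For part (1), I would write $V_m(y,s) = \tfrac{3}{2}y + e^{s/2}\beta_\tau(\mu_m(w) - \dot{\xi}(t))$ so that \eqref{ii-18} at $\mu = \mu_0$ reads $(\partial_s + V_m \partial_y + \tfrac{3}{2}\mu_0) W_{\mu_0}^m = \mathbb{F}_{\mu_0}^m$, then multiply by $e^{q_m(y)} W_{\mu_0}^m$ and integrate over $\mathbb{R}$. Integration by parts on the transport term $\tfrac{1}{2}\int e^{q_m} V_m \partial_y(|W_{\mu_0}^m|^2)\,dy$ yields $-\tfrac{1}{2}\int e^{q_m}(q_m' V_m + \partial_y V_m)|W_{\mu_0}^m|^2\,dy$; since $\partial_y V_m = \tfrac{3}{2} + e^{s/2}\beta_\tau \partial_y \mu_m(w)$ and $Q_m = -V_m q_m'$ by \eqref{v-3}, this assembles into precisely the linear coefficient $Q_m - \tfrac{3}{2} - e^{s/2}\beta_\tau \partial_y \mu_m(w)$ inside the weighted integral. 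Multiplying the resulting identity by $2$ and bounding the source by Young's inequality $2|ab| \leq a^2 + b^2$ with $a = e^{q_m/2} W_{\mu_0}^m$ and $b = e^{q_m/2} \mathbb{F}_{\mu_0}^m$ absorbs one copy of $\int e^{q_m}|W_{\mu_0}^m|^2\,dy$ into the left side, converting the coefficient $3\mu_0 - \tfrac{3}{2}$ into $3\mu_0 - \tfrac{5}{2}$ and producing \eqref{v-4}.

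Part (2) follows by the identical procedure applied to $W_{\mu_0}^n$ with $q_n \equiv 0$, and hence $Q_n \equiv 0$. The only modification is that I would replace the previous Young inequality by the refined version $2|ab| \leq (\mu_0 + 1)a^2 + \tfrac{1}{\mu_0+1}b^2$, which produces the stated prefactor $\tfrac{1}{\mu_0+1}$ on $\int |\mathbb{F}_{\mu_0}^n|^2\,dy$; this absorbs $(\mu_0+1)\int |W_{\mu_0}^n|^2\,dy$ into the left-hand side, converting $3\mu_0 - \tfrac{3}{2}$ into $3\mu_0 - \tfrac{3}{2} - (\mu_0 + 1) = 2\mu_0 - \tfrac{5}{2}$, which is exactly \eqref{v-5}.

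I do not anticipate a serious obstacle: boundary terms at $|y| = \infty$ from the integration by parts vanish because $W_{\mu_0}^m \in L^2(\mathbb{R})$ under the bootstrap assumption \eqref{iv-6} (together with \eqref{vii-401}), and the Lipschitz weight $e^{q_m}$ is well-controlled after a standard truncation-and-limit justification. The one genuinely motivated choice in the statement is the use of \emph{different} Young constants in the two cases: the sharper $(\mu_0+1)^{-1}$ damping of $\mathbb{F}_{\mu_0}^n$ in part (2) is tailored to match the slower decay $e^{-s/2}$ of $\|\partial_y^{\mu_0} W_n\|_{L^2}$ in \eqref{v-1} (versus $e^{-3s/2}$ for the good components), which reflects the presence of quasilinear top-order terms in $\mathbb{F}_{\mu_0}^n$ that must later be absorbed in the Grönwall step closing Theorem \ref{thm8-1}.
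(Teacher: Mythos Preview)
Your proposal is correct and follows essentially the same approach as the paper: multiply \eqref{ii-18} at $\mu=\mu_0$ by $2e^{q_m}W_{\mu_0}^m$, integrate by parts on the transport term, and apply Young's inequality (with constant $1$ for part (1) and with constant $\mu_0+1$ for part (2)) to obtain the stated coefficients. The paper in fact omits the details of part (2), so your explicit identification of the refined Young splitting $2|ab|\le (\mu_0+1)a^2+\tfrac{1}{\mu_0+1}b^2$ is a useful clarification.
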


\begin{proof}  For $1\leq m\leq n-1$, multiplying both sides of \eqref{ii-18} with $\mu=\mu_0$
by $2e^{q_m(y)}W_{\mu_0}^m$ and integrating on $\mathbb{R}$ yield
\begin{equation}\label{v-6}\begin{aligned}
&\frac{d}{ds}\int_{\mathbb{R}}e^{q_m(y)}|W_{\mu_0}^m|^2(y, s) dy+\int_{\mathbb{R}}(3\mu_0-\frac{3}{2}-e^{\frac{s}{2}}\beta_{\tau}\partial_y\mu_m(w)+Q_m)e^{q_m(y)}|W_{\mu_0}^m|^2(y, s) dy\\
=&2\int_{\mathbb{R}}e^{q_m(y)}(\mathbb{F}_{\mu_0}^m\cdot W_{\mu_0}^m)(y, s) dy\\
\leq &\int_{\mathbb{R}}e^{q_m(y)}|W_{\mu_0}^m|^2(y, s) dy+\int_{\mathbb{R}}e^{q_m(y)}|\mathbb{F}_{\mu_0}^m|^2(y, s) dy.
\end{aligned}
\end{equation}

Then \eqref{v-4} comes from \eqref{v-3} and \eqref{v-6}. The estimate \eqref{v-5}
 can be obtained by a standard energy estimate associated with the equation \eqref{ii-18} for $m=n$ and $\mu=\mu_0$.
\end{proof}

Next we analyze the structure of $\mathbb{F}_{\mu_0}^m$.
\begin{lem}\label{lem8-2} For $\mu_0$ satisfying \eqref{iv-50} and $1\leq m\leq n-1$,
then $\mathbb{F}_{\mu_0}^m$ in \eqref{ii-18} satisfies
\begin{equation}\label{v-7}\begin{aligned}
\int_{\mathbb{R}}|\mathbb{F}_{\mu_0}^m|^2 (y, s) dy&
\leq 4nM^{-\frac{1}{16}}\sum\limits_{j=1}^{n-1}\int_{\mathbb{R}}|W_{\mu_0}^j|^2(y, s)dy+2nM^{\frac{1}{16}}\sum\limits_{j=1}^{n-1}\int_{\mathbb{R}}|\eta^{-\frac{1}{3}}(y)W_{\mu_0}^j|^2(y, s)dy\\
&+2n M^{\frac{1}{4}}e^{-2s}\int_{\mathbb{R}}|W_{\mu_0}^n|^2(y, s)dy+e^{-(3+\frac{1}{4})s}.
\end{aligned}
\end{equation}
\end{lem}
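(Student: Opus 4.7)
\textbf{Proof plan for Lemma \ref{lem8-2}.} The plan is to split $\mathbb{F}_{\mu_0}^m$ according to the structure displayed in \eqref{ii-18} and \eqref{ii-11a}, writing
\begin{equation*}
\mathbb{F}_{\mu_0}^m = \mathbb{I}_m + \mathbb{II}_m + \mathbb{III}_m,
\end{equation*}
where $\mathbb{I}_m$ collects the eigenvector-rotation terms $-\sum_j W_{\mu_0}^j\,\ell_m(w)\!\cdot\!\bigl[\partial_s\gamma_j(w)+((\tfrac{3}{2}y-e^{s/2}\beta_\tau\dot\xi)I_n+e^{s/2}\beta_\tau A(w))\partial_y\gamma_j(w)\bigr]$, $\mathbb{II}_m$ is the $\beta=1$ (top-order in $W$) contribution to $\ell_m\!\cdot\! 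F_{\mu_0}$, and $\mathbb{III}_m$ gathers the remaining $\beta\geq 2$ terms. By the commutator identity \eqref{vii-5} together with the pointwise bound \eqref{vii3-1}, $|\mathbb{I}_m|\leq M^2\eta^{-1/3}(y)\sum_{j=1}^{n-1}|W_{\mu_0}^j|$; the $j=n$ contribution drops since $\gamma_n\equiv{\bf e}_n$ is constant. After squaring and integrating, this produces a piece of the form $M^{1/16}\sum_j\int|\eta^{-1/3}W_{\mu_0}^j|^2dy$ provided $M$ is large.

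For $\mathbb{II}_m=-\mu_0\,e^{s/2}\beta_\tau\,\ell_m(w)\,\partial_y A(w)\,\partial_y^{\mu_0}W$, expand via \eqref{ii-17} to obtain $\mathbb{II}_m=-\mu_0\,e^{s/2}\beta_\tau\sum_j W_{\mu_0}^j\,\ell_m(w)\,\partial_y A(w)\,\gamma_j(w)$. The crucial algebraic observation is that the $W_{\mu_0}^n$ coefficient vanishes identically for each $m<n$: since $a_{kn}(w)\equiv 0$ for $k<n$ by \eqref{i-7b}, one has $\partial_y A(w){\bf e}_n=(0,\ldots,0,\partial_y\mu_n(w))^\top$; and since ${\bf e}_n$ is the right eigenvector for $\mu_n$ while $\ell_m$ is the left eigenvector for $\mu_m\neq\mu_n$, the biorthogonality \eqref{i-71a} gives $\ell_m^n(w)\equiv 0$. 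For $j<n$, split $\partial_y A(w)=\sum_k\partial_{w_k}A(w)\partial_y w_k$: the $k<n$ terms use $|\partial_y W_k|\lesssim e^{-3s/2}\eta^{-\nu}$ from \eqref{iv-3b} to gain $O(e^{-s}\eta^{-\nu})|W_{\mu_0}^j|$, while the $k=n$ term uses $\partial_y w_n=e^{-s/2}\partial_y W_0$ together with $|\partial_y W_0|\leq 7/6\,\eta^{-1/3}$ from \eqref{vi-35} to give $O(\eta^{-1/3}(y))|W_{\mu_0}^j|$. Squaring and integrating yields the $M^{-1/16}\sum_j\int|W_{\mu_0}^j|^2dy$ and weighted pieces on the right of \eqref{v-7}.

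For $\mathbb{III}_m$ the terms have the form $\partial^\beta A(w)\cdot\partial_y^{\mu_0-\beta+1}W$ with $\beta\geq 2$, so $W$ is differentiated at most $\mu_0-1$ times. Expand $\partial^\beta A$ by the chain/Faà di Bruno rule into products of derivatives of $w$, and apply Moser-type product estimates in conjunction with the Gagliardo--Nirenberg interpolation in Appendix \ref{A} (Lemma \ref{lemA-3}) to bound intermediate $\|\partial_y^k W\|_{L^\infty}$ by $\|\partial_y^{\mu_0}W\|_{L^2}^\alpha\|\partial_y W\|_{L^\infty}^{1-\alpha}$, combined with the bootstrap $L^\infty$ bounds of Sections \ref{vi}--\ref{vii}. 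Any occurrence of a high derivative of $W_n$ automatically carries the factor $e^{-s/2}$ because $\partial_y^k W_n=e^{-s/2}\partial_y^k W_0$ for $k\geq 1$; after squaring this produces precisely the $M^{1/4}e^{-2s}\int|W_{\mu_0}^n|^2dy$ contribution, while the remaining lower-order terms combine the temporal decay $e^{-3s/2}$ of $\partial_y W_j$ ($j<n$) with the $O(1)$ size of $\partial_y W_0$, yielding a pure forcing bound $e^{-(3+1/4)s}$ with room to spare.

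The main obstacle is the algebraic cancellation in Step 2: without the vanishing of the $W_{\mu_0}^n$-coefficient in $\mathbb{II}_m$, the residual contribution would scale like $e^{s/2}|W_{\mu_0}^n|\sim e^{s/2}\cdot e^{-s/2}=O(1)$, which is incompatible with closing the energy inequality for $\partial_y^{\mu_0}W_m$. This cancellation rests entirely on the partial decoupling structure $a_{in}(w)\equiv 0$ ($i<n$) secured by Proposition \ref{lemA-1}, which simultaneously forces $\gamma_n(w)\equiv{\bf e}_n$ and $\ell_m^n(w)\equiv 0$ for $m<n$, and is the precise place where the reduction performed in Section \ref{II} pays off.
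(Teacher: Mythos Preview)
Your decomposition and the identification of the algebraic cancellation for $\mathbb{II}_m$ are exactly what the paper uses. However, there is a genuine gap: the very same cancellation must be applied to \emph{every} $\beta$, not only to $\beta=1$. Since $\ell_m^n(w)\equiv 0$ and the $n$-th column of $A(w)$ is $(0,\dots,0,\mu_n)^\top$, one has $(\ell_m\cdot\partial_y^\beta A(w))_n=0$ for all $\beta\ge 1$, hence in $\ell_m\cdot\partial_y^\beta A(w)\,\partial_y^{\mu_0+1-\beta}W$ the outer factor only involves $\partial_y^{\mu_0+1-\beta}W_k$ with $k\le n-1$. The paper uses this in \eqref{v-8} to restrict the sum to $1\le k\le n-1$ before invoking Lemma~\ref{lemA-4}. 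Your treatment of $\mathbb{III}_m$ drops the $\ell_m$ and estimates $\partial^\beta A(w)\cdot\partial_y^{\mu_0-\beta+1}W$ generically; then for, say, $\beta=2$ the outer $W_n$-component produces
\[
e^{s/2}\,|\partial_y^2 A(w)|\,|\partial_y^{\mu_0-1}W_n|
\ \lesssim\ Me^{-s/2}\,|\partial_y^{\mu_0-1}W_0|,
\]
whose $L^2$-square is $O(M^{2+2\alpha}e^{-s})$ after Gagliardo--Nirenberg (here $\|\partial_y^{\mu_0-1}W_0\|_{L^2}\lesssim M^\alpha$ since $\|\partial_y^{\mu_0}W_0\|_{L^2}\le M$). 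An $e^{-s}$ forcing term is incompatible with \eqref{v-7} and would only close $\sum_{j<n}\|W_{\mu_0}^j\|_{L^2}^2$ at rate $e^{-s}$, not the required $e^{-3s}$. The sentence ``$W$ is differentiated at most $\mu_0-1$ times'' is also inaccurate: for $\beta=\mu_0$ the Fa\`a di Bruno expansion of $\partial_y^{\mu_0}A(w)$ contains $\partial_y^{\mu_0}w$, and this is precisely where $W_{\mu_0}^n$ enters (harmlessly, with the $e^{-2s}$ weight) once the outer factor has been reduced to good components.

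A second, smaller gap: citing \eqref{vii3-1} for $\mathbb{I}_m$ gives the coefficient $M^2$, so after squaring you obtain $M^4\sum_j\int|\eta^{-1/3}W_{\mu_0}^j|^2$, not $M^{1/16}$; taking $M$ large makes this worse, not better. The paper first sharpens \eqref{vii3-1} to \eqref{v-72} with constant $M^{1/64}$, using the already improved bounds \eqref{vi2-4}, \eqref{vi-35}, \eqref{vii-12}, \eqref{vii-16}, \eqref{viii-5} from the closed part of the bootstrap, and only then squares.
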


\begin{proof} First, due to \eqref{ii-3}, \eqref{vi2-4}, \eqref{vi-35}, \eqref{vii-12}, \eqref{vii-16} and \eqref{viii-5}, the estimate in \eqref{vii3-1} can be improved as
\begin{equation}\label{v-72}
|e^{\frac{s}{2}}\beta_{\tau}[A(w), \frac{\partial\gamma_j(w)}{\partial w}]\partial_y W|\leq M^{\frac{1}{64}}\eta^{-\frac{1}{3}}(y)(1-\delta_j^n).
\end{equation}
Thus, we can obtain from \eqref{ii-18}, \eqref{vii-5} and \eqref{v-72} that
\begin{equation}\label{v-71}\begin{aligned}
\int_{\mathbb{R}}|\mathbb{F}_{\mu_0}^m|^2(y, s)dy&\leq M^{\frac{1}{16}}e^{-s}\sum\limits_{j=1}^{n-1}\int_{\mathbb{R}}|W_{\mu_0}^j|^2(y, s)dy+M^{\frac{1}{16}}\sum\limits_{j=1}^{n-1}\int_{\mathbb{R}}|\eta^{-\frac{1}{3}}(y)W_{\mu_0}^j|^2(y, s)dy\\
&+2\int_{\mathbb{R}}|\ell_m\cdot F_{\mu_0}|^2(y, s)dy.
\end{aligned}
\end{equation}
Due to $\ell_k^n(w)=0$ and $a_{kn}(w)=0$ ($1\leq k\leq n-1$) by \eqref{i-7}-\eqref{i-71}, for $F_{\mu_0}$ in \eqref{ii-11a}, one
then has
\begin{equation}\label{v-8}\begin{aligned}
|\ell_m\cdot F_{\mu_0}|
\leq& \sum\limits_{1\leq \beta\leq \mu_0} 2C_{\mu_0}^{\beta}e^{\frac{s}{2}}|\ell_m\cdot \partial_y^{\beta}A(w)\partial_y^{\mu_0+1-\beta} W|\\
\leq&\sum\limits_{1\leq\beta\leq \mu_0}\sum\limits_{1\leq q\leq \mu_0-\beta+1}2 C_{\mu_0}^{\beta}\|\partial_w^q A(w)\|_{L^{\infty}}e^{\frac{s}{2}}\sum\limits_{k=1}^{n-1}I_{\beta q k}\\
\leq&\left(M^{\frac{1}{16}}e^{s}\sum\limits_{1\leq\beta\leq\mu_0}\sum\limits_{1\leq q\leq \mu_0-\beta+1}\sum\limits_{1\leq k\leq n-1}I_{\beta q k}^2\right)^{1/2},
\end{aligned}
\end{equation}
where  the last inequality comes from \eqref{ii-3}, \eqref{iv-1}-\eqref{iv-3}, and $I_{\beta q k}$ satisfies
\begin{equation}\label{v-9}
I_{\beta q k}=\sum\limits_{\gamma_1+\cdots+\gamma_q=\mu_0-\beta+1,\ \gamma_j\geq 1\ (1\leq j\leq q)}|\partial_{y}^{\gamma_1} W|\cdots|\partial_y^{\gamma_q}W|\cdot|\partial_y^{\beta}W_k|.
\end{equation}
Note that the estimates for ${I_{\beta q k}}'s$ in \eqref{v-9} are taken in Lemma \ref{lemA-4} for $1\leq k\leq n-1$.
Substituting the three type estimates in \eqref{A2-3} into \eqref{v-8} shows
\begin{equation}\label{v-16}\begin{aligned}
&\int_{\mathbb{R}}|\ell_m\cdot F_{\mu_0}|^2(y, s)dy\\
\leq &2nM^{\frac{1}{16}}\sum\limits_{j=1}^{n-1}\int_{\mathbb{R}}|\eta^{-\frac{1}{3}}(y)W_{\mu_0}^j|^2(y, s)dy+2nM^{-\frac{1}{16}}\sum\limits_{j=1}^{n-1}\int_{\mathbb{R}}|W_{\mu_0}^j|^2(y, s)dy\\
&+4nM^{\frac{3}{8}}e^{-2s}\int_{\mathbb{R}^n}|W_{\mu_0}^n|^2(y, s)dy+M^{3\mu_0+2}e^{-(3+\frac{1}{2})s}\ (1\leq m\leq n-1).
\end{aligned}
\end{equation}
Therefore, \eqref{v-7} follows from \eqref{v-71}-\eqref{v-8} and \eqref{v-16}, and then
the proof of Lemma \ref{lem8-2} is finished.\end{proof}

\begin{lem}\label{lem8-3} {\it Let $\mu_0$ satisfy  \eqref{iv-50}, then for $\mathbb{F}_{\mu_0}^n$ in \eqref{ii-18}, one has
\begin{equation}\label{v-17}
\int_{\mathbb{R}}|\mathbb{F}_{\mu_0}^n|^2(y, s)dy\leq \frac{102}{100}(\mu_0+1)^2\int_{\mathbb{R}}|W_{\mu_0}^n|^2 (y, s)dy+M^{\frac{1}{2}}e^{-s}.
\end{equation}}
\end{lem}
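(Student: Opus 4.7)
\vskip 0.2cm

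\noindent\textbf{Proof plan for Lemma \ref{lem8-3}.} The plan is to mirror the strategy of Lemma \ref{lem8-2}, but to pay very careful attention to the single ``resonant'' piece of $\mathbb{F}_{\mu_0}^n$ that forces the coefficient $\tfrac{102}{100}(\mu_0+1)^2$ rather than the innocuous $M^{-1/16}$ factor obtained for $m\le n-1$. Starting from the definition of $\mathbb{F}_{\mu}^{m}$ in \eqref{ii-18} together with the identity \eqref{vii-5}, I would first decompose
\begin{equation*}
\mathbb{F}_{\mu_0}^{n}=-\sum_{j=1}^{n}W_{\mu_0}^{j}\,e^{\frac{s}{2}}\beta_{\tau}\,\ell_{n}(w)\cdot\bigl[A(w),\tfrac{\partial\gamma_{j}(w)}{\partial w}\bigr]\partial_{y}W\;+\;\ell_{n}(w)\cdot F_{\mu_0}.
\end{equation*}
Since $\gamma_{n}(w)={\bf e}_{n}$ is constant by \eqref{i-71b}, the $j=n$ commutator vanishes identically, while for $j\neq n$ the refined bound \eqref{v-72} together with the $L^{2}$ control \eqref{v-2} on $W_{\mu_0}^{j}$ gives an $O(M^{1/32}e^{-3s})$ contribution to $\int|\mathbb{F}_{\mu_0}^{n}|^{2}\,dy$, which is absorbed into the $M^{1/2}e^{-s}$ remainder.

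The remaining work is to bound $\|\ell_{n}\cdot F_{\mu_0}\|_{L^{2}}^{2}$. Splitting the sum in $F_{\mu_0}$ by $\beta$, for $2\le\beta\le\mu_0$ I would reuse Lemma \ref{lemA-4}: the generic term $e^{s/2}|\partial^{\beta}A(w)\,\partial_{y}\partial^{\mu_0-\beta}W|$ expands through the Faà di Bruno structure \eqref{v-9} and is estimated by the three types of interpolation bounds \eqref{A2-3}, producing contributions controlled by $M^{3\mu_0+2}e^{-(3+1/2)s}$ plus lower-order pieces of the form $\eta^{-1/3}W_{\mu_0}^{j}$ with small coefficients, again swallowed into $M^{1/2}e^{-s}$ after using \eqref{v-2}.

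The only \emph{non-small} contribution, and the heart of the lemma, is the $\beta=1$ piece
\begin{equation*}
-\mu_{0}\,e^{\frac{s}{2}}\beta_{\tau}\,\ell_{n}(w)\cdot(\partial_{y}A(w))\,\partial_{y}^{\mu_0}W
=-\mu_{0}\,e^{\frac{s}{2}}\beta_{\tau}\,\sum_{k=1}^{n}\bigl(\ell_{n}\,\partial_{w_{k}}A(w)\bigr)\cdot\partial_{y}^{\mu_0}W\,\partial_{y}W_{k}.
\end{equation*}
For $k\le n-1$ one has $|\partial_{y}W_{k}|\lesssim e^{-3s/2}$ by \eqref{vii5-2}, so these pieces give $O(e^{-s})$ contributions. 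The dangerous term is $k=n$, where $e^{s/2}\partial_{y}W_{n}=\partial_{y}W_{0}$ is $O(1)$ by \eqref{vi-35}. Here I would use the column-zero structure $a_{in}(w)\equiv 0$ for $i\le n-1$ from \eqref{i-7b}, which implies $\partial_{w_{n}}a_{in}(w)\equiv 0$ for $i\le n-1$, so that $\partial_{w_{n}}A(w)$ has vanishing last column except at position $(n,n)$ where $\partial_{w_{n}}a_{nn}=\partial_{w_{n}}\mu_{n}(w)$. Combined with the normalization $\ell_{n}(0)={\bf e}_{n}^{\top}$ and $\partial_{w_{n}}\mu_{n}(0)=1$, plus the bootstrap bounds \eqref{iv-1a}, \eqref{vi2-4} and \eqref{vii-12}, this gives
\begin{equation*}
|\ell_{n}(w)\,\partial_{w_{n}}A(w)\cdot\partial_{y}^{\mu_0}W|\le(1+M^{3}\varepsilon^{1/3})\,|\partial_{y}^{\mu_0}W_{n}|\le(1+M^{3}\varepsilon^{1/3})\,|W_{\mu_0}^{n}|,
\end{equation*}
where the last step uses \eqref{vii-402}. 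Therefore the $k=n,\beta=1$ term is bounded pointwise by $\mu_{0}(1+\varepsilon^{1/22})|\partial_{y}W_{0}|\,|W_{\mu_0}^{n}|$.

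Putting everything together and using $|\beta_{\tau}|\le 2$, $|\partial_{y}W_{0}|\le 1+\varepsilon^{1/21}$ from \eqref{vi-35}, the Cauchy-Schwarz/triangle inequality after squaring yields
\begin{equation*}
\int_{\mathbb{R}}|\mathbb{F}_{\mu_0}^{n}|^{2}\,dy\le \mu_{0}^{2}\bigl(1+\varepsilon^{1/22}\bigr)^{2}\int_{\mathbb{R}}|W_{\mu_0}^{n}|^{2}\,dy+M^{1/2}e^{-s}.
\end{equation*}
Since $\mu_0\ge 6$, one has $\mu_{0}^{2}\le\bigl(\tfrac{\mu_{0}}{\mu_{0}+1}\bigr)^{2}(\mu_{0}+1)^{2}\le\bigl(\tfrac{6}{7}\bigr)^{2}(\mu_{0}+1)^{2}$, which together with $(1+\varepsilon^{1/22})^{2}\le 1+\varepsilon^{1/23}$ gives $\mu_{0}^{2}(1+\varepsilon^{1/22})^{2}\le\tfrac{102}{100}(\mu_{0}+1)^{2}$ for $\varepsilon$ sufficiently small, establishing \eqref{v-17}. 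The main obstacle is the careful identification of the $\beta=1,k=n$ resonant term: one must verify that the full coefficient (comprising $\ell_{n}\partial_{w_{n}}A(w)$ contracted against $\partial_{y}^{\mu_0}W$, then projected onto $W_{\mu_0}^{n}$) is \emph{exactly} $\partial_{y}W_{0}\cdot\partial_{w_{n}}\mu_{n}(w)\cdot\ell_{n}^{n}$ up to terms of size $\varepsilon^{1/3}$ that do not inflate $\mu_{0}^{2}$ by more than a factor approaching $(1+\mu_0^{-1})^{2}$, and this is where the partially decoupled structure \eqref{i-7b}-\eqref{i-7c} is crucial.
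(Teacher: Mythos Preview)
Your plan is on the right track, but there is a genuine gap: you treat only the $\beta=1$ piece of $\ell_{n}\cdot F_{\mu_0}$ as resonant and claim that all of $2\le\beta\le\mu_0$ can be absorbed into the $M^{1/2}e^{-s}$ remainder via Lemma~\ref{lemA-4}. Neither assertion is correct. First, Lemma~\ref{lemA-4} is stated only for the good indices $1\le k\le n-1$; for $k=n$ the replacement (Lemma~\ref{lemA-5}) gives at best $e^{s}\int I_{\beta,1,n}^{2}\,dy\le M^{4}e^{-s}$, which is not $\le M^{1/2}e^{-s}$. Second and more importantly, the $\beta=\mu_0$ term is just as resonant as the $\beta=1$ term: the leading Fa\`a di Bruno piece of $e^{s/2}\partial_{y}^{\mu_0}A(w)\cdot\partial_{y}W$ in the $n$-th row is $\partial_{w_n}\mu_{n}(w)\,\partial_{y}^{\mu_0}W_{n}\cdot\partial_{y}W_{n}$, which is pointwise of the same size as your $\beta=1$ term. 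Since $C_{\mu_0}^{\mu_0}=1$, this adds~$1$ to your coefficient~$\mu_0$, and this is precisely why the paper's leading coefficient is $(\mu_0+1)$ rather than $\mu_0$. The intermediate $2\le\beta\le\mu_0-1$ pieces also produce bilinear terms $|\partial_{y}^{\beta}W_{n}\,\partial_{y}^{\mu_0+1-\beta}W_{n}|$ that are not $O(e^{-s})$ outright; the paper handles them by the interpolation argument of $J_{2}$ in \eqref{v-22}--\eqref{v-221}, which bounds them by $M^{-1/(8(\mu_0-2))}\int|\partial_{y}^{\mu_0}W_{n}|^{2}\,dy$ plus an $O(e^{-s})$ remainder.

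Two smaller points. Your pointwise inequality $|\ell_{n}\,\partial_{w_{n}}A(w)\cdot\partial_{y}^{\mu_0}W|\le(1+M^{3}\varepsilon^{1/3})|\partial_{y}^{\mu_0}W_{n}|$ is not right as stated, since the off-diagonal entries $\partial_{w_n}a_{nj}(w)$ for $j\ne n$ carry no $\varepsilon$-smallness; what saves you is that $\|\partial_{y}^{\mu_0}W_{j}\|_{L^{2}}\le Me^{-3s/2}$ by \eqref{iv-6}, so these pieces go into the remainder in $L^{2}$ but not pointwise. Also, $|\beta_{\tau}|\le 2$ is far too crude here---you need $|\beta_{\tau}-1|\lesssim\varepsilon^{1/3}$ from \eqref{viii-3} to keep the leading constant close to~$1$. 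Once you extract both $\beta=1$ and $\beta=\mu_0$, treat $2\le\beta\le\mu_0-1$ by interpolation as in the paper's $J_{2}$, and use $\beta_{\tau}=1+O(\varepsilon^{1/3})$, the coefficient becomes $(\mu_0+1)^{2}(1+\varepsilon^{1/40})\le\frac{102}{100}(\mu_0+1)^{2}$ and the proof closes.
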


\begin{proof} Note that under the assumptions \eqref{i-7}-\eqref{i-71}, $\ell_n(w)$ has the decomposition
\begin{equation}\label{v-18}
\ell_n(w)={\bf e}_n^{\top}+\sum\limits_{j=1}^{n-1}c_j(w)\ell_j(w),
\end{equation}
where $c_j(w)=-{\bf e}_n^{\top}\cdot\gamma_j(w)\ (1\leq j\leq n-1)$.

Combining \eqref{v-18} with $\gamma_n(w)={\bf e}_n$ in \eqref{i-71}, \eqref{ii-11}, \eqref{ii-18} shows
\begin{equation}\label{v-19}\begin{aligned}
|\mathbb{F}_{\mu_0}^n|=&|\ell_n\cdot F_{\mu_0}|
\leq \sum\limits_{j=1}^{n-1}|c_j(w)\ell_j\cdot F_{\mu_0}|+|{\bf e}_n^{\top}\cdot F_{\mu_0}|\\
\leq& \sum\limits_{j=1}^{n-1}|c_j(w)\ell_j\cdot F_{\mu_0}|+e^{\frac{s}{2}}\beta_{\tau}\sum\limits_{1\leq\beta\leq\mu_0}C_{\mu_0}^{\beta}|\partial_y^{\beta}\mu_{n}(w)
\partial_y^{\mu_0-\beta+1}W_n|\\
&+e^{\frac{s}{2}}\beta_{\tau}\sum\limits_{j=1}^{n-1}\sum\limits_{1\leq\beta\leq\mu_0}|\partial_y^{\beta}(a_{nj}(w))
\partial_y^{\mu_0-\beta+1}W_j|:=\sum\limits_{i=1}^{5}J_i,
\end{aligned}
\end{equation}
where
\begin{equation*}\begin{cases}
J_1=e^{\frac{s}{2}}\beta_{\tau}(\mu_0+1)|\partial_y W_n\partial_y^{\mu_0}W_n|,\\
J_2=e^{\frac{s}{2}}\beta_{\tau}\sum\limits_{2\leq\beta\leq\mu_0-1}C_{\mu_0}^{\beta}|\partial_y^{\beta}W_n\partial_y^{\mu_0-\beta}W_n|,\\
J_3=\sum\limits_{j=1}^{n-1}|c_j(w)\ell_j\cdot F_{\mu_0}|,\\
J_4=e^{\frac{s}{2}}\beta_{\tau}\sum\limits_{1\leq\beta\leq\mu_0}C_{\mu_0}^{\beta}|\partial_y^{\beta}(\mu_{n}(w)-W_n)\partial_y^{\mu_0-\beta+1}W_n|,\\
J_5=e^{\frac{s}{2}}\beta_{\tau}\sum\limits_{j=1}^{n-1}\sum\limits_{1\leq\beta\leq\mu_0}|\partial_y^{\beta}(a_{nj}(w))\partial_y^{\mu_0-\beta+1}W_j|.
\end{cases}
\end{equation*}
It is derived from \eqref{v-19} that
\begin{equation}\label{v-20}
\int_{\mathbb{R}}|\mathbb{F}_{\mu_0}^n|^2(y, s)dy\leq \frac{101}{100}\int_{\mathbb{R}}J_1^2(y, s)dy+M^{\frac{1}{32}}\sum\limits_{k=2}^{5}\int_{\mathbb{R}}J_k^2(y, s)dy.
\end{equation}
In addition, it follows from \eqref{ii-3}, \eqref{vi-35} and \eqref{viii-3} that
\begin{equation}\label{v-21}
\int_{\mathbb{R}}J_1^2(y, s)dy\leq (1+\varepsilon^{\frac{1}{40}})(\mu_0+1)^2\int_{\mathbb{R}}|\partial_y^{\mu_0}W_{n}|^2 (y, s)dy.
\end{equation}

For $J_2$, by H\"older inequality, \eqref{ii-3} and \eqref{vi-35}, we have
\begin{equation}\label{v-22}\begin{aligned}
\int_{\mathbb{R}}J_2^2(y, s)dy&\leq M^{\frac{1}{8(\mu_0-1)}}\int_{\mathbb{R}}|\partial_y^{\mu_0-1}W_n|^2(y, s)ds\\
&+M^{\frac{1}{16(\mu_0-1)}}e^{s}\sum\limits_{1<\beta<\mu_0-2}\|\partial_y^{\beta}W_n(\cdot, s)\|_{L^{\frac{2(\mu_0-2)}{\beta-1}}}^2\|\partial_y^{\mu_0-\beta}W_n(\cdot, s)\|_{L^{\frac{2(\mu_0-2)}{\mu_0-\beta-1}}}^2.
\end{aligned}
\end{equation}
Note that by Lemma \ref{lemA-3},
\begin{equation*}\begin{aligned}
&\|\partial_y^{\beta}W_n(\cdot, s)\|_{L^{\frac{2(\mu_0-2)}{\beta-1}}}\leq M^{\frac{1}{64(\mu_0-1)}}\|\partial_y W_n(\cdot, s)\|_{L^{\infty}}^{\frac{\mu_0-\beta-1}{\mu_0-2}}\|\partial_y^{\mu_0-1}W_n(\cdot, s)\|_{L^2}^{\frac{\beta-1}{\mu_0-2}},\\
&\|\partial_y^{\mu_0-\beta}W_n(\cdot, s)\|_{L^{\frac{2(\mu_0-2)}{\mu_0-\beta-1}}}\leq M^{\frac{1}{64(\mu_0-1)}}\|\partial_y W_n(\cdot, s)\|_{L^{\infty}}^{\frac{\beta-1}{\mu_0-2}}\|\partial_y^{\mu_0-1}W_n(\cdot, s)\|_{L^{2}}^{\frac{\mu_0-\beta-1}{\mu_0-2}}.
\end{aligned}
\end{equation*}
Then combining these two estimates with \eqref{ii-3}, \eqref{vi-35}, \eqref{v-22} and Lemma \ref{lemA-3} yields
\begin{equation}\label{v-221}\begin{aligned}
\int_{\mathbb{R}}J_2^2(y, s)dy&\leq 2M^{\frac{1}{8(\mu_0-1)}}\int_{\mathbb{R}}|\partial_y^{\mu_0-1}W_n|^2(y, s)ds\\
&\leq M^{\frac{1}{4}(\mu_0-1)}\|\partial_y W_n(\cdot, s)\|_{L^{2}}^{\frac{2}{\mu_0-1}}\|\partial_y^{\mu_0}W_n(\cdot, s)\|_{L^2}^{2\frac{\mu_0-2}{\mu_0-1}}\\
&\leq M^{-\frac{1}{8(\mu_0-2)}}\int_{\mathbb{R}}|\partial_y^{\mu_0}W_n|^2(y, s)dy+2M^{\frac{3}{8}}e^{-s}.
\end{aligned}
\end{equation}

For $J_3$, due to $\gamma_j(0)={\bf e}_j\ (1\leq j\leq n-1)$ in \eqref{i-71}, one then derives
from \eqref{ii-3} and \eqref{iv-1}-\eqref{iv-3} that
\begin{equation}\label{v-222}
|c_j(w)|\leq M\|W(\cdot, s)\|_{L^{\infty}}\leq M^4\varepsilon^{\frac{1}{3}}\leq \varepsilon^{\frac{1}{4}}\ (1\leq j\leq n-1).
\end{equation}
On the other hand, it follows from \eqref{v-2}, \eqref{v-16} and \eqref{v-222} that
\begin{equation}\label{v-23}
\int_{\mathbb{R}}J_3^2(y, s)dy\leq \varepsilon^{\frac{1}{6}} e^{-3s}.
\end{equation}
With respect to $J_4$, one has from \eqref{v-19} that
\begin{equation}\label{w-1}\begin{aligned}
J_4=&e^{\frac{s}{2}}\beta_{\tau}\sum\limits_{1\leq \beta\leq \mu_0}C_{\mu_0}^{\beta}|\partial_y^{\beta}(\mu_n(w)-W_n)\partial_y^{\mu_0-\beta+1}W_n|\\
\leq&e^{\frac{s}{2}}\beta_{\tau}\sum\limits_{1\leq\beta\leq\mu_0}C_{\mu_0}^{\beta}|\partial_{w_n}(\mu_n(w)-W_n)|I_{\beta 1 n}\\
+& e^{\frac{s}{2}}\beta_{\tau}\sum\limits_{1\leq\beta\leq\mu_0}C_{\mu_0}^{\beta}\sum\limits_{1\leq k\leq n-1}|\partial_{w_k}(\mu_n(w)-W_n)|I_{\beta 1 k}\\
+&e^{\frac{s}{2}}\beta_{\tau}\sum\limits_{1\leq\beta\leq\mu_0}\sum\limits_{2\leq q\leq \mu_0-\beta+1}c_{\beta q n}(w)I_{\beta q n}\\
:=&\sum\limits_{1\leq \beta\leq \mu_0}(J_{41 \beta}+J_{42 \beta}+J_{43 \beta}),
\end{aligned}
\end{equation}
where $|c_{\beta q n}(w)|\leq M$ due to \eqref{ii-3} and \eqref{iv-1}-\eqref{iv-3}. Since $\partial_{w_n}\mu_n(0)=1$,
by \eqref{ii-3} and \eqref{iv-1}-\eqref{iv-3}, we have
\begin{equation}\label{w-2}
|J_{41 \beta}|\leq M\varepsilon^{\frac{1}{6}}e^{\frac{s}{2}}I_{\beta 1 n}.
\end{equation}
In the similar and easier way,
\begin{equation}\label{w-3}
|J_{42 \beta}|+|J_{43 \beta}|\leq M\sum\limits_{1\leq k\leq n-1}I_{\beta 1 k}+M^2\sum\limits_{2\leq q\leq \mu_0-\beta+1}I_{\beta q n}.
\end{equation}
With the help of Lemma \ref{lemA-4}, Lemma \ref{lemA-5} and \eqref{v-2}, we obtain from \eqref{w-1}-\eqref{w-3} that
\begin{equation}\label{w-4}
\int_{\mathbb{R}}J_4^2(y, s)dy\leq \varepsilon^{\frac{1}{10}}e^{-s}.
\end{equation}
Analogously to the treatment of $J_4$ in \eqref{w-1}-\eqref{w-4}, one has
\begin{equation}\label{v-24}
\int_{\mathbb{R}}J_5^2(y, s)dy\leq \varepsilon^{\frac{1}{10}}e^{-s}.
\end{equation}
Therefore, \eqref{v-17} comes from \eqref{v-21}, \eqref{v-221}, \eqref{v-23}, \eqref{w-4}, \eqref{v-24} and the largeness of $M$.
\end{proof}

\subsection{Energy estimates of $W_{\mu_0}^n$}

First, we close the estimate of $W_{\mu_0}^n$ in \eqref{v-2}. Substituting \eqref{v-17} into \eqref{v-5} yields
\begin{equation}\label{v-25}
\frac{d}{ds}\int_{\mathbb{R}}|W_{\mu_0}^n|^2(y, s)dy+\int_{\mathbb{R}}(\frac{98\mu_0-102}{100}-\frac{5}{2}-\beta_{\tau}e^{\frac{s}{2}}\partial_y\mu_n(w))|W_{\mu_0}^n|^2(y, s)dy\leq M^{\frac{1}{2}}e^{-s}.
\end{equation}
When $\mu_0\geq 6$, it is derived from \eqref{i-7}, \eqref{ii-3}, \eqref{vi2-4}, \eqref{vi-35}, \eqref{vii-12}, \eqref{vii-16},
\eqref{viii-3} and \eqref{viii-5} that
\begin{equation}\label{v-26}
\frac{98\mu_0-102}{100}-\frac{5}{2}-\beta_{\tau}e^{\frac{s}{2}}\partial_y\mu_n(w)
\geq\frac{486}{100}-\frac{5}{2}-M e^{-s}-(1+\varepsilon^{\frac{1}{20}})>\frac{6}{5}.
\end{equation}
In addition, one has from \eqref{v-25}-\eqref{v-26} that
\begin{equation*}
\frac{d}{ds}\int_{\mathbb{R}}|W_{\mu_0}^n|^2(y, s)dy
+\frac{6}{5}\int_{\mathbb{R}}|W_{\mu_0}^n|^2(y, s)dy\leq M^{\frac{1}{2}}e^{-s}.
\end{equation*}
This yields
\begin{equation}\label{v-27}
\int_{\mathbb{R}}|W_{\mu_0}^n|^2(y, s)dy\leq e^{\frac{6}{5}(-\log\varepsilon-s)}\int_{\mathbb{R}}|W_{\mu_0}^n|^2(y, -\log\varepsilon)dy+5M^{\frac{1}{2}}e^{-s}.
\end{equation}
Then it follows from \eqref{v-27}, \eqref{ii-3}, \eqref{iii-9} and \eqref{vii-401} that
\begin{equation}\label{v-271}
\int_{\mathbb{R}}|W_{\mu_0}^n|^2(y, s)dy\leq 6M^{\frac{1}{2}} e^{-s}.
\end{equation}

\subsection{Energy estimates of $W_{\mu_0}^j\ (1\leq j\leq n-1)$}

We now close the estimates of $W_{\mu_0}^j\ (1\leq j\leq n-1)$ in \eqref{v-2}. Similarly to \eqref{v-26},
there exists a minimal positive integer $\mu_0\geq 6$ such that for all $1\leq m\leq n-1$,
\begin{equation}\label{v-28}
3\mu_0-\frac{5}{2}-\beta_{\tau}e^{\frac{s}{2}}\partial_y\mu_m(w)\geq 3\mu_0-\frac{5}{2}-M e^{-s}-(1+\varepsilon^{\frac{1}{4}})|\partial_{w_n}\mu_m(0)|\geq 4.
\end{equation}
This also ensures the assumption \eqref{iv-50} in turn.

In addition, it follows from \eqref{v-7} and \eqref{v-271} that
\begin{equation}\label{v-29}
\int_{\mathbb{R}}|\mathbb{F}_{m}^{\mu_0}|^2(y, s)dy\leq \frac{\delta}{2n}\sum\limits_{j=1}^{n-1}\int_{\mathbb{R}}|W_{\mu_0}^j|^2(y, s)dy+c^{*}\sum\limits_{j=1}^{n-1}\int_{\mathbb{R}}|\eta^{-\frac{1}{3}}(y)W_{\mu_0}^j|^2(y, s)dy+24nM^{\frac{3}{4}}e^{-3s},
\end{equation}
where $\delta=8n^2 M^{-\frac{1}{16}}$ and $c^*=2n M^{\frac{1}{16}}$.
On the other hand, there exist two positive constants $K$ and $K^{*}$ such that
\begin{equation}\label{v-30}
\eta^{-\frac{2}{3}}(y)\leq \frac{\delta}{2n c^{*}}\ (|y|\geq K),\ \eta^{-\frac{2}{3}}(y)\leq \frac{K^*}{nc^{*}}\ (|y|\leq K).
\end{equation}
It is derived from \eqref{v-29} and \eqref{v-30} that
\begin{equation}\label{v-31}
\int_{\mathbb{R}}|\mathbb{F}_{m}^{\mu_0}|^2(y, s)dy\leq \frac{\delta}{n}\sum\limits_{j=1}^{n-1}\int_{\mathbb{R}}|W_{\mu_0 j}|^2(y, s)dy+\frac{K^*}{n}\sum\limits_{j=1}^{n-1}\int_{-K}^K|W_{\mu_0 j}|^2(y, s)dy+24n M^{\frac{3}{4}} e^{-3s}.
\end{equation}

Next we determine the Lipschitz continuous function $q_m(y)\ (1 \leq m\leq n-1)$ in Lemma \ref{lem8-1}. Due to \eqref{i-7a}
and $\mu_n(0)=0$, when $\mu_m(0)<0\ (1\leq m\leq i_0-1)$, then $q_m(y)$ is defined as
\begin{equation}\label{v-32}
q_m(y):=q_m^-(y)=\begin{cases}0,\ y\leq -K,\\
\frac{y}{K}+1,\ -K\leq y\leq K,\\
2,\ y\geq K.
\end{cases}
\end{equation}
When $\mu_m(0)>0\ (i_0\leq m\leq n-1)$, $q_m(y)$ is defined as
\begin{equation}\label{v-33}
q_m(y):=q_m^+(y)=q_m^-(-y).
\end{equation}
According to the definitions \eqref{v-32}-\eqref{v-33}, we have
\begin{equation}\label{v-34}
0\leq q_m(y)\leq 2.
\end{equation}
On the other hand, $Q_m(y, s)$ in \eqref{v-3} satisfies
\begin{equation}\label{v-35}\begin{aligned}
Q_m(y, s)&\geq \left(\frac{1}{K}\beta_{\tau}e^{\frac{s}{2}}|\mu_m(w)-\dot{\xi}(t)|-\frac{3}{2}K\right)\chi_{\{|y|\leq K\}}\\
&\geq \left(\frac{|\mu_m(0)|}{2K}e^{\frac{s}{2}}-\frac{3}{2}K\right)\chi_{\{|y|\leq K\}}\geq K^*\chi_{\{|y|\leq K\}},
\end{aligned}
\end{equation}
where the last inequality comes from \eqref{i-7}, \eqref{ii-3}, \eqref{vi2-4}, \eqref{vii-12}, \eqref{viii-1} and the fact of $s\geq -\log\varepsilon$.

With \eqref{v-271}-\eqref{v-28}, \eqref{v-31}, \eqref{v-34}-\eqref{v-35} and the largeness of $M$, summing up $m$
on both sides of \eqref{v-4} from $1$ to $n-1$ yield
\begin{equation*}\label{v-36}\begin{aligned}
\frac{d}{ds}\sum\limits_{m=1}^{n-1}\int_{\mathbb{R}}|W_{\mu_0}^m|^2(y, s)dy
+\frac{7}{2}\sum\limits_{m=1}^{n-1}\int_{\mathbb{R}}|W_{\mu_0}^m|^2(y, s)dy\leq 14n^2 M^{\frac{3}{4}} e^{4-3s}.
\end{aligned}
\end{equation*}
This shows
\begin{equation}\label{v-36}
\sum\limits_{m=1}^{n-1}\int_{\mathbb{R}}|W_{\mu_0}^m|^2(y, s)dy\leq e^{\frac{7}{2}(-\log\varepsilon-s)}\sum\limits_{m=1}^{n-1}\int_{\mathbb{R}}|W_{\mu_0}^m|^2(y, -\log\varepsilon)dy+28n^2 M^{\frac{3}{4}}e^{4-3s}.
\end{equation}
Then it comes from \eqref{iii-9}, \eqref{vii-401} and \eqref{v-36}  that
\begin{equation}\label{v-37}
\sum\limits_{m=1}^{n-1}\int_{\mathbb{R}}|W_{\mu_0}^n|^2(y, s)dy\leq 30n^2 e^4 M^{\frac{3}{4}} e^{-3s}.
\end{equation}

{\bf Proof of Theorem \ref{thm8-1}:} It is derived from \eqref{v-271}, \eqref{v-37} and \eqref{vii-402} that
\begin{equation}\label{v-38}
\sum\limits_{j=1}^{n-1}\|\partial_y^{\mu_0}W_j|^2(\cdot, s)\|_{L^2(\mathbb{R})}\leq (n-1)\sum\limits_{k=1}^{n-1}\|W_{\mu_0}^{k}(\cdot, s)\|_{L^2(\mathbb{R})}\leq \sqrt{30}n^2 e^2 M^{\frac{3}{8}}e^{-\frac{3}{2}s}
\end{equation}
and
\begin{equation}\label{v-39}
\|\partial_y^{\mu_0}W_n(\cdot, s)\|_{L^2(\mathbb{R})}\leq \sum\limits_{k=1}^{n}\|W_{\mu_0}^k(\cdot, s)\|_{L^2(\mathbb{R})}\leq \sqrt{30}n^2 e^2 M^{\frac{3}{8}}e^{-\frac{3}{2}s}+\sqrt{6}M^{\frac{1}{4}}e^{-\frac{s}{2}}.
\end{equation}
Then the estimates in \eqref{v-1} come from \eqref{v-38}, \eqref{v-39} and the largeness of $M$.
Therefore, the proof of Theorem \ref{thm8-1} is completed.

\section{Proof of main theorems}\label{V}

In the section, we complete the proofs of Theorem \ref{thmii-1} and Theorem \ref{thmi-1}.
\subsection{Proof of Theorem \ref{thmii-1}}

Based on the local existence of \eqref{i-6} (see \cite{MA}), we utilize the continuous induction to prove Theorem \ref{thmii-1}.
To this end, under the induction assumptions \eqref{iv-1}-\eqref{iv-3} and \eqref{iv-6} for suitably large $M>16$,
the proof of Theorem \ref{thmii-1} is mainly reduced to recover the estimates \eqref{iv-1}-\eqref{iv-3}
and \eqref{iv-6} with the smaller coefficient bounds via the bootstrap arguments.

The induction assumptions of $\kappa(t), \tau(t)$ and $\xi(t)$ in \eqref{iv-1a} and their derivatives in \eqref{iv-1b} are recovered in \eqref{viii-5}-\eqref{viii-51}, \eqref{viii-11} and \eqref{viii-2}-\eqref{viii-3}, \eqref{viii-1} with $M$ replaced by the smaller ones $M^{\frac{1}{2}}, 2$ and $2M^{\frac{3}{4}}$ respectively.

In the similar way, the assumptions \eqref{iv-2} for $W_0$ as well as $\mathcal{W}$ are also recovered by \eqref{vi-24}, \eqref{vi-36}, \eqref{vi-35}, \eqref{vi-16} and \eqref{vi-27} with the coefficients replaced by the smaller ones accordingly.  In addition, the assumptions \eqref{iv-3} for $W_j\ (j\neq n)$ are also obtained by \eqref{vii-12}, \eqref{vii-17}, \eqref{vii5-2} and \eqref{vii-30} with the coefficient $M$ replaced by the smaller ones. In addition, the energy assumptions \eqref{iv-6} are obviously
derived by \eqref{v-1} in Theorem \ref{thm8-1} with $M$ replaced by $M^{\frac{1}{2}}$.

Therefore, Theorem \ref{thmii-1} is proved via the method of continuous induction.

\subsection{Proof of Theorem \ref{thmi-1}}

Due to \eqref{ii-2} and Theorem \ref{thmii-1}, in order to complete the proof of Theorem \ref{thmi-1},
we only need to verify the $C^{\frac{1}{3}}$ optimal regularity of $W_0$ and \eqref{i-13}.

First, we show that the optimal regularity of $W_0(y, s)$ is $C^{\frac{1}{3}}$ with respect to the spatial variable.

By \eqref{ii-2}, for any $t\geq -\varepsilon$ and $x_1, x_2\in\mathbb{R}$, set
\begin{equation}\label{V-1}
s=s(t), y_i=(x_i-\xi(t))e^{\frac{3s}{2}}\ (i=1, 2).
\end{equation}
Then for any $\alpha>0$, by \eqref{ii-3}, we arrive at
\begin{equation}\label{V-2}
\frac{|w_n(x_1, t)-w_n(x_2, t)|}{|x_1-x_2|^{\alpha}}=e^{\frac{3\alpha s}{2}-\frac{s}{2}}\frac{|W_0(y_1, s)-W_0(y_2, s)|}{|y_1-y_2|^{\alpha}}.
\end{equation}
When $\alpha=\frac{1}{3}$, it is derived from \eqref{V-2}, \eqref{ii-9b} and \eqref{vi-35} that
\begin{equation}\label{V-3}\begin{aligned}
&\sup\limits_{x_1, x_2\in\mathbb{R}, x_1\neq x_2}\frac{|w_n(x_1, t)-w_n(x_2, t)|}{|x_1-x_2|^{\alpha}}\\
\leq& \sup\limits_{y_1, y_2\in\mathbb{R}, y_1\neq y_2}\frac{|W_0(y_1, s)-W_0(y_2, s)|}{|y_1-y_2|^{\frac{1}{3}}}\\
=&\sup\limits_{y_1, y_2\in\mathbb{R}, y_1\neq y_2}\frac{\displaystyle|\int_{y_2}^{y_1}\partial_y W_0(z, s)dz|}{|y_1-y_2|^{\frac{1}{3}}}\\
\leq&2\sup\limits_{y_1, y_2\in\mathbb{R}, y_1\neq y_2}\frac{\displaystyle|\int_{y_2}^{y_1}\eta^{-\frac{1}{3}}(z)dz|}{|y_1-y_2|^{\frac{1}{3}}}\leq 12.
\end{aligned}
\end{equation}

When $\frac{1}{3}<\alpha<1$, with \eqref{ii-8}, \eqref{ii-14}, \eqref{vi-16} and $y_{1}^*=1, y_2^*=0$ (Denote ($x_1^*, x_2^*)$ by the
corresponding transformation \eqref{V-1} respectively), we have
\begin{equation}\label{V-4}\begin{aligned}
\frac{|W_0(y_1^*, s)-W_0(y_2^*, s)|}{|y_1^*-y_2^*|^{\alpha}}&\geq |\overline{W}(1)|-|\mathcal{W}(1, s)-\mathcal{W}(0, s)|\\
&\geq |\overline{W}(1)|-2\varepsilon^{\frac{1}{11}}>\frac{1}{2}|\overline{W}(1)|>0.
\end{aligned}
\end{equation}

Thus, \eqref{V-4}  shows that when $\frac{1}{3}<\alpha<1$, for any $\overline{M}>0$, there exists a
constant $s_0\geq -\log\varepsilon$ (and corresponding $t_0$ by \eqref{V-1}) such that when $s\geq s_0$,
\begin{equation}\label{V-5}
e^{\frac{3\alpha s}{2}-\frac{s}{2}}\frac{|W_0(y_1^*, s)-W_0(y_2^*, s)|}{|y_1^*-y_2^*|^{\alpha}}\geq \frac{1}{2}e^{\frac{3\alpha s_0}{2}-\frac{s_0}{2}}|\overline{W}(1)|>\overline{M}.
\end{equation}

Due to the arbitrariness of $\overline{M}$,  it implies from \eqref{V-2} and \eqref{V-5} that $w_n(x, t)\notin C^{\alpha}$
with $\alpha>\frac{1}{3}$. Combining this with \eqref{V-3} shows that the optimal regularity of $w_n(x, t)$
is $C^{\frac{1}{3}}$ with respect to the spatial variable.

Therefore, (1) and (2) in Theorem \ref{thmi-1}  are obtained from Theorem \ref{thmii-1} and the above
verification of $C^{\frac{1}{3}}$ regularity for $W_0$.

Next we prove \eqref{i-13}. It is derived from \eqref{ii-2}, \eqref{ii-3} and \eqref{ii-10} that
\begin{equation}\label{V-6}
\partial_x w_n(\xi(t), t)=e^{s}\partial_y W_0(0, s)=-e^{s}=\frac{1}{\tau(t)-t}.
\end{equation}
By the definition of $\tau(t)$ in \eqref{ii-1}, we have $\tau(T^*)=T^*$ and then
\begin{equation}\label{V-7}
|T^*-\tau(t)|=|\tau(T^*)-\tau(t)|\leq \int_t^{T^*}|\dot{\tau}(z)|dz\lesssim \varepsilon^{\frac{1}{3}}(T^*-t),
\end{equation}
where the last inequality comes from (2) in Theorem \ref{ii-1}.

Following \eqref{V-7}, one has
\begin{equation}\label{V-8}\begin{aligned}
&|\tau(t)-t|\leq |T^*-t|+|\tau(t)-T^*|\leq (1+\varepsilon^{\frac{1}{4}})|T^*-t|,\\
&|\tau(t)-t|\geq |T^*-t|-|\tau(t)-T^*|\geq (1-\varepsilon^{\frac{1}{4}})|T^*-t|.
\end{aligned}
\end{equation}

Then, we derive from \eqref{V-6}-\eqref{V-8} that
\begin{equation}\label{V-9}
-2<(T^*-t)\partial_x w_n(\xi(t), t)<-\frac{1}{2}.
\end{equation}

Collecting \eqref{V-7}, \eqref{V-9} and Theorem \ref{ii-1} yields \eqref{i-13} in Theorem \ref{thmi-1} and then the
proof of Theorem \ref{thmi-1} is completed.

\appendix

\renewcommand{\appendixname}{}

\section{Appendix}\label{A}

In the Appendix, we introduce a useful interpolation inequality
(see \cite{Ad}) and give its applications.
\begin{lem}\label{lemA-3} {\bf (Gagliardo-Nirenberg-Sobolev inequality).} {\it Let $u: \mathbb{R}^d\to\mathbb{R}$.
Fix $1\leq q, r\leq \infty$ and $j, m\in\mathbb{N}$, and $\frac{j}{m}\leq \alpha\leq 1$. If
\begin{equation*}
\frac{1}{p}=\frac{j}{d}+\alpha(\frac{1}{r}-\frac{m}{d})+\frac{1-\alpha}{q},
\end{equation*}
then one has
\begin{equation}\label{A2-1}
\|D^j u\|_{L^p}\leq C\|D^m u\|_{L^r}^{\alpha}\|u\|_{L^q}^{1-\alpha},
\end{equation}
where the positive constant $C$  depends on $d, m, r, q$ and $m$.} \end{lem}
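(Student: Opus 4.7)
Since the statement is the classical Gagliardo–Nirenberg–Sobolev inequality, for which the paper appeals to \cite{Ad}, my plan is the standard three-step route: a scaling check to pin down the exponent identity, endpoint estimates, and then interpolation. First I would verify that substituting $u_\lambda(x)=u(\lambda x)$ into both sides of \eqref{A2-1} and balancing powers of $\lambda$ forces exactly the prescribed relation
$$\frac{1}{p}=\frac{j}{d}+\alpha\!\left(\frac{1}{r}-\frac{m}{d}\right)+\frac{1-\alpha}{q}.$$
This is not a step of the proof but it shows the constant $C$ may be taken independent of $u$ and locates what must be combined.

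Next I would establish the two endpoints of the $\alpha$-interval. The case $\alpha=1$ is the pure Sobolev embedding $\|D^{j}u\|_{L^{p}}\lesssim\|D^{m}u\|_{L^{r}}$, which follows by writing $D^{j}u=c_{m,j}\,I_{m-j}\ast D^{m}u$ (Riesz potential of order $m-j$) and invoking Hardy–Littlewood–Sobolev; the base case $\|u\|_{L^{d/(d-1)}}\lesssim\|\nabla u\|_{L^{1}}$ needed here is the Loomis–Whitney / Fubini estimate. The case $\alpha=j/m$ is a pure Hölder interpolation between $L^{q}$ and $L^{r}$ applied to $D^{j}u$, so no derivative cost appears.

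For intermediate admissible $\alpha$, I would use interpolation. The slickest route is Littlewood–Paley: for each dyadic frequency block $P_{k}u$, Bernstein gives $\|D^{j}P_{k}u\|_{L^{p}}\lesssim 2^{jk}\|P_{k}u\|_{L^{p}}$, together with the two one-sided bounds $\|P_{k}u\|_{L^{p}}\lesssim 2^{-(m-j^{\prime})k}\|D^{m}P_{k}u\|_{L^{r}}$ and $\|P_{k}u\|_{L^{p}}\lesssim 2^{ak}\|P_{k}u\|_{L^{q}}$ with the shifts $j^{\prime}$, $a$ dictated by the scaling identity above. Forming the geometric mean with weights $\alpha$ and $1-\alpha$ and summing a geometric series in $k$ reproduces \eqref{A2-1}. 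Equivalently one can invoke real interpolation between the endpoints directly, viewing the inequality as asserting that the identity embeds $L^{q}\cap\dot{W}^{m,r}$ into $\dot{W}^{j,p}$.

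The only genuine obstacle is the endpoint behaviour when one of $p,q,r$ degenerates to $1$ or $\infty$; there the interpolation step must be replaced by Hardy-space/BMO substitutes or by a direct Hölder argument, and one can lose the strict scaling $\alpha=j/m$ case. In the stated range $1\leq q,r\leq\infty$ and $j/m\leq\alpha\leq 1$ with generic exponents these technicalities are routine, and since the applications in the paper only invoke the interior range, citing \cite{Ad} for the boundary details is the expedient option.
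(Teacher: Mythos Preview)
Your sketch is a reasonable outline of the standard proof, but note that the paper does not actually prove Lemma~\ref{lemA-3} at all: it simply states the inequality and cites \cite{Ad} (Adams, \emph{Sobolev Spaces}) as the source, then moves on to applications. So there is no ``paper's own proof'' to compare against; you have supplied more than the authors did.

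That said, a few cautions on your sketch. The scaling argument is heuristic and, as you note, not a proof step. The endpoint $\alpha=j/m$ is not in general a pure H\"older interpolation on $D^{j}u$ between $L^{q}$ and $L^{r}$ norms of $u$ and $D^{m}u$; one still needs a genuine interpolation of derivatives (e.g.\ the one-dimensional Kolmogorov--Landau inequality iterated, or the Littlewood--Paley argument you describe). Your Littlewood--Paley route is fine for the interior range, and you correctly flag the endpoint issues at $p,q,r\in\{1,\infty\}$. Since the paper only invokes the inequality in the interior range with $d=1$ (see the applications in Lemma~\ref{lemA-4} and Lemma~\ref{lemA-5}), citing \cite{Ad} is indeed sufficient for its purposes.
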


Next we estimate the $L^2-$norm of  the terms ${I_{\beta q k}}'s\ (1\leq k\leq n)$ with the expression as
\begin{equation}\label{A2-2}
I_{\beta q k}=\sum\limits_{\gamma_1+\cdots+\gamma_q=\mu_0-\beta+1,\ \gamma_j\geq 1 (1\leq j\leq q)}|\partial_y^{\gamma_1}W|\cdots |\partial_y^{\gamma_q}W|\cdot|\partial_y^{\beta}W_k|.
\end{equation}

The estimates of ${I_{\beta q k}}'s$ are considered in two cases: $1\leq k\leq n-1$ and $k=n$.
\begin{lem}\label{lemA-4} {\it For $1\leq k\leq n-1$, we have
\begin{subequations}\label{A2-3}\begin{align}
&\sum\limits_{\beta=1, \mu_0}\int_{\mathbb{R}}I_{\beta 1 k}^2(y, s)dy\nonumber\\
&\qquad\qquad\leq  2e^{-s}\sum\limits_{j=1}^{n-1}\int_{\mathbb{R}}|\eta^{-\frac{1}{3}}(y)W_{\mu_0}^j|^2(y, s)dy+M^{\frac{1}{16}}e^{-3s}\int_{\mathbb{R}}|W_{\mu_0}^{n}|^2(y, s)dy+M^3 e^{-6s},\label{A2-3a}\\
&\sum\limits_{1< \beta<\mu_0}\int_{\mathbb{R}}I_{\beta 1 k}^2(y, s)dy\nonumber\\
&\qquad\qquad\leq 2 M^{-\frac{1}{8}}e^{-s}\sum\limits_{j=1}^{n-1}\int_{\mathbb{R}}|W_{\mu_0}^j|^2(y, s)dy+M^{\frac{1}{4}}e^{-3s}\int_{\mathbb{R}}|W_{\mu_0}^n|^2(y, s)dy+M^3 e^{-6s},\label{A2-3b}\\
&\int_{\mathbb{R}}I_{\beta q k}^2(y, s)\leq M^{3\mu_0+1}e^{-(4+\frac{1}{4})s}\ (q\geq 2).\label{A2-3c}
\end{align}
\end{subequations}
}
\end{lem}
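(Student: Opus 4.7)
The plan is to expand $I_{\beta q k}$ according to the eigendecomposition \eqref{ii-17} and then treat the three regimes $(\beta=1$ or $\mu_0)$, $(1<\beta<\mu_0)$, $(q\ge 2)$ separately, using the bootstrap $L^\infty$ bounds from Sections \ref{vi}--\ref{vii} for lower-order derivatives and the energy bound \eqref{v-1}/\eqref{iv-6} for the top-order derivative $\partial_y^{\mu_0}W$, interpolating at intermediate orders via Lemma \ref{lemA-3}.

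For \eqref{A2-3a}, precisely one of the two factors in $I_{\beta 1 k}=|\partial_y^{\mu_0-\beta+1}W|\cdot|\partial_y^{\beta}W_k|$ is of top order. When $\beta=1$, the top-order factor $\partial_y^{\mu_0}W$ splits via \eqref{ii-17} and \eqref{vii-402} into a "good" part $\sum_{m<n}W_{\mu_0}^m\gamma_m(w)$ and a "bad" part $W_{\mu_0}^n\gamma_n(w)$. Pairing these with $\partial_y W_k$ ($k\le n-1$) and using \eqref{iv-3b} with $\nu=\tfrac{1}{3}$ gives $|\partial_y W_k|\lesssim e^{-s/2}\eta^{-1/3}$, which produces the first term $e^{-s}\|\eta^{-1/3}W_{\mu_0}^j\|_{L^2}^2$; pairing the bad part instead with the cruder $\|\partial_y W_k\|_{L^\infty}\lesssim e^{-3s/2}$ yields the second term $e^{-3s}\|W_{\mu_0}^n\|_{L^2}^2$. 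The residual $M^3 e^{-6s}$ absorbs the commutators coming from $\gamma_m(w)-\gamma_m(0)$. The case $\beta=\mu_0$ is symmetric, but now the bad component disappears directly: because $\gamma_n^k=0$ for $k<n$ by \eqref{i-71b}, $\partial_y^{\mu_0}W_k=\sum_{m<n}W_{\mu_0}^m\gamma_m^k(w)$, so only the good components $W_{\mu_0}^j$ $(j<n)$ enter, paired against the low-order $\partial_y W$, whose size is dominated by $|\partial_y W_n|=e^{-s/2}|\partial_y W_0|\lesssim e^{-s/2}\eta^{-1/3}$ via \eqref{vi-35}.

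For \eqref{A2-3b}, both factors are strictly below top order, so I would apply Gagliardo--Nirenberg (Lemma \ref{lemA-3}) to interpolate between the $L^\infty$ bootstrap bounds \eqref{iv-2}--\eqref{iv-3} (for $\partial_y W$ and $\partial_y W_k$) and the $L^2$ energy $\|\partial_y^{\mu_0}W\|_{L^2}$, splitting the latter as in the previous paragraph into the good components (absorbed as $\|\eta^{-1/3}W_{\mu_0}^j\|_{L^2}^2$) and the bad component (absorbed as $e^{-2s}\|W_{\mu_0}^n\|_{L^2}^2$). The factor $M^{-1/8}$ appears because both exponents in the interpolation are strictly less than one, so one gets a gain from $\|D^{j}u\|_{L^p}\lesssim \|D^{\mu_0}u\|_{L^2}^{\alpha}\|u\|_{L^\infty}^{1-\alpha}$ with $\alpha<1$, and the $L^\infty$ bounds carry explicit powers of $e^{-s}$. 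Case \eqref{A2-3c} with $q\ge 2$ is the easiest: each of the $q+1$ factors is strictly below top order, hence controllable purely by $L^\infty$-type interpolation, and each factor carries at least an $e^{-s/2}$ temporal decay, so the product already beats $e^{-(4+1/4)s}$ with large margin, and \eqref{iv-6} is not even needed.

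The main obstacle will be the careful bookkeeping of the $\eta$-weight through the Gagliardo--Nirenberg interpolation in \eqref{A2-3b}: one must route the weight $\eta^{-1/3}$ onto $W_{\mu_0}^j$ for $j<n$ (rather than onto $W_{\mu_0}^n$) and simultaneously match the temporal prefactor $e^{-s}$ (respectively $e^{-3s}$). The structural reason two distinct bounds appear on the right-hand side is the asymmetry between $|\partial_y W_j|\lesssim e^{-3s/2}$ for $j<n$ and $|\partial_y W_n|\lesssim e^{-s/2}\eta^{-1/3}$: the former decays fast in time but has no spatial weight, while the latter decays slowly in time but carries the spatial weight, so each of them is profitable against exactly one of the two $L^2$ reservoirs in the top-order decomposition.
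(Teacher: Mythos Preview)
Your treatment of \eqref{A2-3a} matches the paper: split $\partial_y^{\mu_0}W$ via \eqref{ii-17}--\eqref{vii-402}, pair the good components $W_{\mu_0}^j$ ($j<n$) against $|\partial_y W_n|=e^{-s/2}|\partial_y W_0|\lesssim e^{-s/2}\eta^{-1/3}$ from \eqref{vi-35}, and pair $W_{\mu_0}^n$ against $|\partial_y W_k|\lesssim e^{-3s/2}$.

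For \eqref{A2-3b} your strategy (H\"older $+$ Gagliardo--Nirenberg $+$ Young) is the paper's, but two details are mis-stated. First, the weight $\eta^{-1/3}$ does \emph{not} survive on the right of \eqref{A2-3b}: after interpolating each factor to $\|\partial_y W\|_{L^\infty}^{1-\alpha}\|\partial_y^{\mu_0}W\|_{L^2}^{\alpha}$, the low-order pieces are plain $L^\infty$ norms, so the $\eta^{-1/3}$ available on $\partial_y W_0$ is simply absorbed into $\|\partial_y W\|_{L^\infty}\lesssim e^{-s/2}$ and no weight is routed to $W_{\mu_0}^j$. Second, the factor $M^{-1/8}$ is not an automatic byproduct of $\alpha<1$; it is put in by hand via a weighted Young inequality on the interpolated product $\|\partial_y^{\mu_0}W_k\|_{L^2}^{2\alpha}\|\partial_y^{\mu_0}W\|_{L^2}^{2(1-\alpha)}\|\partial_y W_k\|_{L^\infty}^{2(1-\alpha)}\|\partial_y W\|_{L^\infty}^{2\alpha}$, choosing the splitting constant so that the term carrying $\|\partial_y^{\mu_0}W_k\|_{L^2}^2$ receives the small prefactor.

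The real gap is \eqref{A2-3c}. Your claim that \eqref{iv-6} ``is not even needed'' and that every factor is controllable in $L^\infty$ from the bootstrap alone is false. The bootstrap $L^\infty$ bounds \eqref{iv-2a}, \eqref{iv-3a} cover $\partial_y^{\mu}W$ only for $1\le\mu\le 4$, whereas for $q=2$, $\beta=1$ the constraint $\gamma_1+\gamma_2=\mu_0\ge 6$ forces one $\gamma_j\ge \mu_0-1\ge 5$, and for $\beta\ge 2$ the factor $\partial_y^{\beta}W_k$ itself can have order $\ge 5$. The paper therefore interpolates every intermediate factor via Lemma \ref{lemA-3} in the forms \eqref{v-13a}--\eqref{v-13c}, between $\|\partial_y W\|_{L^\infty}$ and $\|\partial_y^{\mu_0}W\|_{L^2}$; the top-order energy \eqref{iv-6} enters essentially through $\|\partial_y^{\mu_0}W\|_{L^2}$. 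Without it you have no control over, say, $\|\partial_y^{\mu_0-1}W\|_{L^\infty}$, and your per-factor decay count $e^{-s/2}$ has no basis once the order exceeds $4$.
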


\begin{proof} For the proof of \eqref{A2-3a},  by \eqref{ii-3},  \eqref{vi-35}, \eqref{vii-16}, the expansion in \eqref{ii-17}, \eqref{vii-401}-\eqref{vii-402} and \eqref{v-2}, we have
\begin{equation}\label{v-10}\begin{aligned}
&\int_{\mathbb{R}}(I^2_{\mu_0 1 k}+I^2_{1 1 k})(y, s)dy\\
\leq&\int_{\mathbb{R}}|\partial_y W|^2\cdot |\partial_y^{\mu_0}W_k|^2(y, s)dy+\int_{\mathbb{R}}|\partial_y W_k|^2|\partial_y^{\mu_0}W|^2(y, s)dy\\
\leq& 2e^{-s}\sum\limits_{j=1}^{n-1}\int_{\mathbb{R}}|\eta^{-\frac{1}{3}}(y)W_{\mu_0}^j|^2(y, s)dy+M^{\frac{1}{16}} e^{-3s}\sum\limits_{j=1}^{n}\int_{\mathbb{R}}|W_{\mu_0}^j|^2(y, s)dy.
\end{aligned}
\end{equation}
Combining \eqref{v-10} with \eqref{v-2} yields \eqref{A2-3a}.

 With respect to the case of $1<\beta<\mu_0$ and $q=1$ in \eqref{A2-2}, it is derived from \eqref{ii-3}, \eqref{vi-35}, \eqref{vii-402}, \eqref{vii-16}, \eqref{v-2}, H\"older inequality and Lemma \ref{lemA-3} that
\begin{equation}\label{v-11}\begin{aligned}
&\sum\limits_{1<\beta<\mu_0}\int_{\mathbb{R}}I_{\beta 1 k}^2(y, s)dy\\
\leq&\sum\limits_{1<\beta<\mu_0}\int_{\mathbb{R}}|\partial_y^{\mu_0+1-\beta}W|^2 |\partial_y^{\beta}W_k|^2(y, s)dy\\
\leq&\sum\limits_{1<\beta<\mu_0}\|\partial_y^{\beta}W_k(\cdot, s)\|_{L^{\frac{2(\mu_0-1)}{\beta-1}}}^2\|\partial_y^{\mu_0-\beta+1}W(\cdot, s)\|_{L^{\frac{2(\mu_0-1)}{\mu_0-\beta}}}^2\\
\leq&M^{\frac{1}{16}} \sum\limits_{1<\beta<\mu_0}\|\partial_y^{\mu_0}W_k(\cdot, s)\|_{L^2}^{2\frac{\beta-1}{\mu_0-1}}\|\partial_y W_k(\cdot, s)\|_{L^{\infty}}^{2\frac{\mu_0-\beta}{\mu_0-1}}\|\partial_y^{\mu_0}W(\cdot, s)\|_{L^2}^{2\frac{\mu_0-\beta}{\mu_0-1}}\|\partial_y W(\cdot, s)\|_{L^{\infty}}^{2\frac{\beta-1}{\mu_0-1}}\\
\leq&M^{\frac{1}{8}+\frac{\beta-1}{8(\mu_0-1)}}\|\partial_y W_k(\cdot, s)\|_{L^{\infty}}^2\|\partial_y^{\mu_0}W(\cdot, s)\|_{L^2}^2+M^{-\frac{1}{8}}\|\partial_y W(\cdot, s)\|_{L^{\infty}}^2\|\partial_y^{\mu_0}W_k(\cdot, s)\|_{L^2}^2\\
\leq& 2M^{-\frac{1}{8}} e^{-s}\sum\limits_{j=1}^{n-1}\int_{\mathbb{R}}|W_{\mu_0}^j|^2(y, s)dy+M^{\frac{1}{4}}e^{-3s}\sum\limits_{j=1}^{n}\int_{\mathbb{R}}|W_{\mu_0}^j|^2(y, s)dy.
\end{aligned}
\end{equation}
Then \eqref{A2-3b} comes from \eqref{v-11} and \eqref{v-2}.

For the easier cases of $I_{\beta q k}$ ($q\geq 2$) in \eqref{A2-2}, we
will apply the following three type estimates with the help of Lemma \ref{lemA-3}:
\begin{subequations}\label{v-13}\begin{align}
&\|\partial_y^{\gamma_j}W(\cdot, s)\|_{L^{\infty}}\leq M^{\frac{1}{16}}\|\partial_y^{\mu_0}W(\cdot, s)\|_{L^2}^{\frac{\gamma_j-1}{\mu_0-1-\frac{1}{2}}}\|\partial_y W(\cdot, s)\|_{L^{\infty}}^{\frac{\mu_0-\frac{1}{2}-\gamma_j}{\mu_0-1-\frac{1}{2}}},\label{v-13a}\\
&\|\partial_y^{\gamma_j}W(\cdot, s)\|_{L^{2}}\leq M^{\frac{1}{16}}\|\partial_y^{\mu_0}W(\cdot, s)\|_{L^2}^{\frac{\gamma_j-1-\frac{1}{2}}{\mu_0-1-\frac{1}{2}}}\|\partial_y W(\cdot, s)\|_{L^{\infty}}^{\frac{\mu_0-\gamma_j}{\mu_0-1-\frac{1}{2}}}\ (\gamma_j\geq 2),\label{v-13b}\\
&\|\partial_y^{\beta}W_k(\cdot, s)\|_{L^2}
\leq M^{\frac{1}{16}}\|\partial_y^{\mu_0}W_k(\cdot, s)\|_{L^2}^{\frac{\beta-1-\frac{1}{2}}{\mu_0-1-\frac{1}{2}}}\|\partial_y W_k(\cdot, s)\|_{L^{\infty}}^{\frac{\mu_0-\beta}{\mu_0-1-\frac{1}{2}}}\ (\beta\geq 2).\label{v-13c}
\end{align}
\end{subequations}

When $\beta\geq 2$ and $q\geq 2$, substituting \eqref{v-13a} and \eqref{v-13c} into \eqref{A2-2} yields
\begin{equation}\label{v-131}\begin{aligned}
&\int_{\mathbb{R}} I_{\beta q k}^2 (y, s)dy\\
=&\sum\limits_{\gamma_1+\cdots+\gamma_q=\mu_0-\beta+1,\ \gamma_j\geq 1 (1\leq j\leq q)}\|\partial_y^{\gamma_1}W(\cdot, s)\|_{L^{\infty}}^2\cdots\|\partial_y^{\gamma_q}W(\cdot, s)\|_{L^{\infty}}^2\|\partial_y^{\beta}W_k(\cdot, s)\|_{L^{2}}^2\\
\leq & M^{\frac{q+1}{4}}\|\partial_y^{\mu_0}W(\cdot, s)\|_{L^2}^{2\frac{\mu_0-\beta+1-q}{\mu_0-1-\frac{1}{2}}}\|\partial_y W(\cdot, s)\|_{L^{\infty}}^{2\frac{q(\mu_0-\frac{1}{2})-(\mu_0-\beta+1)}{\mu_0-1-\frac{1}{2}}}\|\partial_y^{\mu_0}W_k(\cdot, s)\|_{L^{2}}^{2\frac{\beta-1-\frac{1}{2}}{\mu_0-1-\frac{1}{2}}}\|\partial_y W_k(\cdot, s)\|_{L^{\infty}}^{2\frac{\mu_0-\beta}{\mu_0-1-\frac{1}{2}}}\\
\leq& M^{\frac{q+1}{2}} \left(\|\partial_y W(\cdot, s)\|_{L^{\infty}}^{2q}+\|\partial_y^{\mu_0}W(\cdot, s)\|_{L^2}^{2q}\right)\left(\|\partial_y^{\mu_0}W_k(\cdot, s)\|_{L^2}^2+\|\partial_y W_k(\cdot, s)\|_{L^{\infty}}^2\right).
\end{aligned}
\end{equation}
Combining this with \eqref{iv-6}, \eqref{vi-35} and \eqref{vii-16} derives
\begin{equation}\label{v-14}
\int_{\mathbb{R}}I_{\beta q k}^2 (y, s)dy\leq M^{3q+1}e^{-(q+3)s}\leq M^{3\mu_0+1}e^{-(q+3)s}\ (q\geq 2, \beta\geq 2).
\end{equation}

When $\beta=1$ and $q\geq 3$, similarly to \eqref{v-14}, we have
\begin{equation}\label{v-15}\begin{aligned}
&\int_{\mathbb{R}} I_{1 q k}^2 (y, s)dy\\
=&\sum\limits_{\gamma_1+\cdots+\gamma_q=\mu_0-\beta+1,\ \gamma_j\geq 1 (1\leq j\leq q)}\|\partial_y^{\gamma_1}W(\cdot, s)\|_{L^{\infty}}^2\cdots\|\partial_y^{\gamma_q}W(\cdot, s)\|_{L^{\infty}}^2\|\partial_y W_k(\cdot, s)\|_{L^{2}}^2\\
\leq & M^{\frac{q+1}{4}}\|\partial_y^{\mu_0}W(\cdot, s)\|_{L^2}^{2\frac{\mu_0-\beta+1-q}{\mu_0-1-\frac{1}{2}}}\|\partial_y W(\cdot, s)\|_{L^{\infty}}^{2\frac{q(\mu_0-\frac{1}{2})-(\mu_0-\beta+1)}{\mu_0-1-\frac{1}{2}}}\|\partial_y W_k(\cdot, s)\|_{L^{2}}^{2}\\
\leq& M^{\frac{q+1}{2}} \left(\|\partial_y W(\cdot, s)\|_{L^{\infty}}^{2q}+\|\partial_y^{\mu_0}W(\cdot, s)\|_{L^2}^{2q}\right)\|\partial_y W_k(\cdot, s)\|_{L^{2}}^2\\
\leq &M^{3q+1}e^{-(q+\frac{5}{4})s}\ (q\geq 3),
\end{aligned}
\end{equation}
where the last estimate comes from \eqref{iv-6} and \eqref{vii5-2} with $\nu=\frac{7}{24}$.

When $\beta=1$ and $q=2$, without loss of generality, we assume $\gamma_2\geq 2$ due to $\gamma_1+\gamma_2=\mu_0\geq 6$.
Then we apply \eqref{v-13a} and \eqref{v-13b} to control $\partial_y^{\gamma_1}W$ and $\partial_y^{\gamma_2}W$ respectively and
subsequently obtain the following estimate with \eqref{iv-6} and \eqref{vii-16}
\begin{equation}\label{v-151}\begin{aligned}
&\int_{\mathbb{R}}I_{1 q k}^2(y, s)dy\\
=&\|\partial_y^{\gamma_1}W(\cdot, s)\|_{L^{\infty}}^2\|\partial_y^{\gamma_2}W(\cdot, s)\|_{L^2}^2\|\partial_y W_k(\cdot, s)\|_{L^{\infty}}^2\\
\leq & M^{\frac{1}{4}}\|\partial_y W(\cdot, s)\|_{L^{\infty}}^{2\frac{\mu_0-\frac{1}{2}}{\mu_0-1-\frac{1}{2}}}\|\partial_y^{\mu_0}W(\cdot, s)\|_{L^{2}}^{2\frac{\mu_0-2-\frac{1}{2}}{\mu_0-1-\frac{1}{2}}}\|\partial_y W_k(\cdot, s)\|_{L^{\infty}}^2\\
\leq &M^5 e^{-5s}.
\end{aligned}
\end{equation}
Thus, we  get \eqref{A2-3c} from \eqref{v-14}-\eqref{v-151} and the proof of Lemma \ref{lemA-4} is finished.\end{proof}

\begin{lem}\label{lemA-5} {\it For $I_{\beta q n}$, one has
\begin{equation}\label{A-10}
\int_{\mathbb{R}}I_{\beta q n}^2(y, s)dy\leq M^{3q+1}e^{-(q+1)s}.
\end{equation}
}
\end{lem}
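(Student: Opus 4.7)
The plan is to mirror the proof of Lemma \ref{lemA-4}, using the bootstrap bounds that apply when $k=n$. The decisive observation is that, since $W_n=e^{-s/2}W_0+\kappa(t)$ with $\kappa(t)$ independent of $y$, we have $\partial_y^\mu W_n=e^{-s/2}\partial_y^\mu W_0$ for every $\mu\geq1$, so \eqref{vi-35} and \eqref{iv-6} yield
\[
\|\partial_y W_n\|_{L^\infty}\lesssim e^{-s/2},\qquad \|\partial_y^{\mu_0}W_n\|_{L^2}\lesssim Me^{-s/2},
\]
and since $W_n$ dominates the $L^\infty$ and $L^2$ norms of the vector $W$ among its components (the good components $W_j$ with $j\neq n$ decay as $e^{-3s/2}$, an extra factor $e^{-s}$ smaller),
\[
\|\partial_y W\|_{L^\infty}\lesssim e^{-s/2},\qquad \|\partial_y^{\mu_0}W\|_{L^2}\lesssim Me^{-s/2}.
\]
This uniform $e^{-s/2}$ scaling (rather than the $e^{-3s/2}$ scaling available for $k\neq n$ in Lemma \ref{lemA-4}) is exactly what produces the weaker decay $e^{-(q+1)s}$ in \eqref{A-10}.

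Given the defining expansion of $I_{\beta q n}$, I would apply H\"older's inequality to split the $(q+1)$-fold product into factors in $L^{p_1},\ldots,L^{p_{q+1}}$ with $\sum 1/p_i=1/2$, and then invoke the Gagliardo-Nirenberg-Sobolev inequality (Lemma \ref{lemA-3}) to interpolate each $\|\partial_y^{\gamma_i}W\|_{L^{p_i}}$ between $\|\partial_y W\|_{L^\infty}$ and $\|\partial_y^{\mu_0}W\|_{L^2}$, and likewise $\|\partial_y^{\beta}W_n\|_{L^{p_{q+1}}}$ between $\|\partial_y W_n\|_{L^\infty}$ and $\|\partial_y^{\mu_0}W_n\|_{L^2}$. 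Since every endpoint of every such interpolation is $\lesssim Me^{-s/2}$, each of the $q+1$ factors comes out as $M^{O(1)}e^{-s/2}$, and hence
\[
\|I_{\beta q n}\|_{L^2}\lesssim M^{O(q)}e^{-(q+1)s/2},
\]
whose square is the desired \eqref{A-10} provided the aggregate $M$-exponent stays below $3q+1$.

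For strict parallelism with Lemma \ref{lemA-4} I would treat the three subcases $(q=1,\ \beta\in\{1,\mu_0\})$, $(q=1,\ 1<\beta<\mu_0)$, and $(q\geq 2)$ in turn: the boundary $q=1$ cases reduce to a direct H\"older pairing of one $L^2$ and one $L^\infty$ factor; the interior $q=1$ case uses exactly the interpolation exponents from \eqref{v-13a}--\eqref{v-13c}; and the $q\geq 2$ case mirrors \eqref{v-131} line-by-line, with the $k=n$ endpoint bounds substituted for the $k\neq n$ ones. In each subcase the $M$-exponent is at most $2(q+1)$, which is bounded by $3q+1$ for all $q\geq 1$, with slack to absorb the $M^{1/(64(\mu_0-1))}$ factors produced by the Gagliardo-Nirenberg constants.

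I do not foresee any conceptual obstacle; the proof is essentially a transcription of Lemma \ref{lemA-4}'s case analysis, replacing the fast $e^{-3s/2}$ decay with the slower $e^{-s/2}$ decay wherever the bad component $W_n$ appears. The only point requiring care is the combinatorial bookkeeping: verifying that the H\"older exponents $p_i$ and the Gagliardo-Nirenberg parameters $\alpha_i=(\gamma_i-1-\tfrac12)/(\mu_0-1-\tfrac12)$ (for $L^2$-type factors) and $\alpha_i=(\gamma_i-1)/(\mu_0-1-\tfrac12)$ (for $L^\infty$-type factors) lie in their admissible ranges and that the accumulated $M$-power remains within the generous budget $3q+1$. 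Given that budget, this bookkeeping presents no real difficulty.
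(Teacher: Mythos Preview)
Your proposal is correct and follows essentially the same route as the paper. The paper's own proof is very terse: it singles out the endpoint terms $I_{\mu_0 1 n}$ and $I_{1 1 n}$ by the direct H\"older pairing (one factor in $L^2$, one in $L^\infty$, exactly as you describe), and for the remaining cases simply says ``similarly to \eqref{v-131}'' to obtain the product bound $M^{(q+1)/2}\bigl(\|\partial_y W\|_{L^\infty}^{2q}+\|\partial_y^{\mu_0}W\|_{L^2}^{2q}\bigr)\bigl(\|\partial_y W_n\|_{L^\infty}^{2}+\|\partial_y^{\mu_0}W_n\|_{L^2}^{2}\bigr)$, then plugs in the $e^{-s/2}$ endpoints from \eqref{ii-3}, \eqref{iv-6}, \eqref{vi-35}, \eqref{vii-16}. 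Your decisive observation that all four endpoint norms scale like $e^{-s/2}$ when $k=n$ (versus the extra $e^{-s}$ gain for $k\neq n$) is exactly what drives the weaker exponent $e^{-(q+1)s}$, and your three-case breakdown is in fact slightly more explicit than the paper's presentation.
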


\begin{proof} As in Lemma \ref{lemA-4}, the estimates on the two terms $I_{\mu_0 1 n}$ and $I_{1 1 n}$ are crucial in the
proof of \eqref{A-10}. For these two terms, similarly to \eqref{v-10}, one has
\begin{equation}\label{A-111}
\int_{\mathbb{R}}(I_{\mu_0 1 n}^2+_{1 1 n}^2)(y, s)dy\leq 2\int_{\mathbb{R}}|\partial_y W|^2\cdot|\partial_y^{\mu_0}W|^2(y, s)dy
\leq 2\|\partial_y W(\cdot, s)\|_{L^{\infty}}^2\cdot\|\partial_y^{\mu_0}W(\cdot, s)\|_{L^2}^2.
\end{equation}

When $1<\beta<\mu_0$, similarly to \eqref{v-131}, we arrive at
\begin{equation}\label{A-112}
\int_{\mathbb{R}}I_{\beta q n}^2(y, s)dy\leq  M^{\frac{q+1}{2}}\left(\|\partial_y W(\cdot, s)\|_{L^{\infty}}^{2q}+\|\partial_y^{\mu_0}W(\cdot, s)\|_{L^2}^{2q}\right)\left(\|\partial_y W_n(\cdot, s)\|_{L^{\infty}}^2+\|\partial_y^{\mu_0}W_n(\cdot, s)\|_{L^2}^2\right).
\end{equation}
Combining \eqref{A-111}-\eqref{A-112} with \eqref{ii-3}, \eqref{iv-6}, \eqref{vi-35} and \eqref{vii-16} yields \eqref{A-10}
and the proof of Lemma \ref{lemA-5} is completed.
\end{proof}

\vskip 1 true cm

\end{document}